\documentclass[pdflatex]{sn-jnl}

\usepackage{graphicx}%
\usepackage{multirow}%
\usepackage{amsmath,amssymb,amsfonts}%
\usepackage{amsthm}%
\usepackage{mathrsfs}%
\usepackage[title]{appendix}%
\usepackage{xcolor}%
\usepackage{textcomp}%
\usepackage{manyfoot}%
\usepackage{booktabs}%
\usepackage{algorithm}%
\usepackage{algorithmicx}%
\usepackage{algpseudocode}%
\usepackage{listings}%

\theoremstyle{thmstyleone}%
\newtheorem{theorem}{Theorem}
\newtheorem{proposition}[theorem]{Proposition}%

\newtheorem{corollary}{Corollary}

\newtheorem{lemma}{Lemma}

\theoremstyle{thmstyletwo}%
\newtheorem{remark}{Remark}%

\theoremstyle{thmstylethree}%
\newtheorem{definition}{Definition}%

\begin{document}

\title[On a general method for deriving a fourth-order differential equation satisfied by Laguerre–Hahn orthogonal polynomials with new results for the class 0 analogous to Hermite]{On a general method for deriving a fourth-order differential equation satisfied by Laguerre–Hahn orthogonal polynomials with new results for the class 0 analogous to Hermite}


\author[1]{\fnm{Mohamed} \sur{Khalfallah}}\email{mohamed.khalfallah@fsm.rnu.tn}

\author[2]{\fnm{Pascal} \sur{Maroni}} 
\equalcont{17th January 1933 -- 16th January  2024}

\author*[3]{\fnm{Zélia} \sur{da Rocha}}\email{mrdioh@fc.up.pt}

\affil[1]{\orgdiv{Department of Mathematics}, 
\orgname{Faculty of Sciences of Monastir, University of Monastir}, 
\orgaddress{
\city{Monastir}, 
\postcode{5019}, 
\country{Tunisia}}}

\affil[2]{\orgdiv{Laboratoire Jacques-Louis Lions}, 
\orgname{Sorbonne Université, CNRS}, 
\orgaddress{
\street{Boite courrier 187; 4, place Jussieu}, 
\city{Paris}, 
\postcode{75252 Paris cedex 05}, 
\country{France}}}

\affil[3]{\orgdiv{Departamento de Matemática, Centro de Matemática da Universidade do Porto (CMUP)}, 
\orgname{Faculdade de Ciências da Universidade do Porto}, 
\orgaddress{
\street{Rua do Campo Alegre n. 687}, 
\city{Porto}, 
\postcode{4169-007}, 
\country{Portugal}}}


\abstract{In this work, we develop a constructive method for deriving four structure relations and a fourth-order linear differential equation satisfied by Laguerre–Hahn orthogonal polynomial sequences. The method relies on a combination of structure relations, their successive derivatives, and algebraic elimination techniques. Particular attention is given to semiclassical and classical families, which are recovered as special cases within this general framework. The approach is systematized in the form of an algorithm. Using symbolic computations, we obtain explicit new results for Laguerre–Hahn polynomials of class zero, analogous to the Hermite case. In addition, we present results for a semiclassical example of class~1.}

\keywords{Orthogonal polynomials, Laguerre-Hahn forms, fourth-order differential equation, structure relations, Hermite polynomials, algorithms, symbolic computations}


\pacs[MSC 2020 Classification]{34, 33C45, 33D45, 42C05, 33F10, 68W30, 62-09, 33F05, 65D20, 68-04}

\maketitle


\section{Introduction}



The investigation of orthogonal polynomial sequences $\displaystyle\{P_n\}_{n\geq 0}$ satisfying differential equations of the form
\begin{equation}\label{Eq-Diff-general}
\sum_{k=0}^N f_k\, y^{(k)} = 0,
\end{equation}
where the coefficients $f_k$ are polynomials (possibly depending on $n$), is closely related to measure perturbation theory and to the spectral analysis of differential operators \cite{Buendia-1988}. This topic has been widely studied in the literature on special functions and possesses a long-standing history. For a detailed overview of the developments in this area, we refer the interested reader to the survey \cite{Everitt-1991}. It was established in \cite{Hahn-1978} that whenever an orthogonal polynomial sequence satisfies a differential equation of the form \eqref{Eq-Diff-general}, the order of such an equation can always be reduced to the minimal cases $N=2$ or $N=4$.\\

In the particular case $N=2$ in \eqref{Eq-Diff-general}, where $f_2$ and $f_1$ do not depend on $n$ and $f_0=\lambda$, with $\lambda$ denoting a spectral (eigenvalue) parameter depending on $n$, the differential equation takes the form
\begin{equation}\label{Eq-Diff-classical}
f_2\, y'' + f_1\, y' + \lambda y = 0.
\end{equation}
In this situation, the classification of orthogonal polynomial solutions is well known. More precisely, the corresponding sequence 
$\displaystyle\{P_n\}_{n\geq 0}$ must coincide, up to a linear change of the independent variable, with one of the classical families of orthogonal polynomials, namely the Hermite, Laguerre, Jacobi, or Bessel polynomials (see \cite{Bochner-1929} and also \cite{Everitt-1991} for a survey of the problem concerning the determination of orthogonal polynomial families satisfying \eqref{Eq-Diff-classical}).\\

In the case $N=2$, and more generally, it is well established—at least since the works \cite{Hahn-1983} and \cite{Hahn-1978} (see also \cite{Maroni-1991})—that a sequence of orthogonal polynomials is semiclassical if and only if it satisfies a second-order differential equation with polynomial coefficients whose degrees are bounded.\\

For $N=4$, it is known that Laguerre-Hahn orthogonal polynomials satisfy fourth-order differential equations with polynomial coefficients. However,  in contrast with the second-order case, such differential equations do not yet yield a complete characterization of the Laguerre-Hahn class, since the converse implication is still open in general. It is conjectured that every orthogonal polynomial sequence satisfying a fourth-order differential equation with polynomial coefficients of bounded degrees necessarily belongs to the Laguerre-Hahn class (see \cite{Magnus-1983}, \cite[Sect.III]{brezinski1985polynomes}).
 \\

In the present work, we concentrate on differential equations associated with Laguerre--Hahn orthogonal polynomials. These polynomials are closely connected with Stieltjes functions that satisfy Riccati equations with polynomial coefficients 
\cite{Alaya-these-1996, Alaya-Maroni,Dini-these-1988,Dzoumba-these-1985,Magnus-1983, Maroni-1983, Maroni-1991},
\begin{equation}\label{intro-Riccati}
\Phi S' = B S^2 + C S + D, \qquad \Phi \neq 0.
\end{equation}

The notion of Laguerre--Hahn linear functionals was developed through a detailed formalism of the required algebraic operations together with an appropriate topological framework; see 
\cite{Dini-these-1988, Maroni-1991}. 
More precisely, \cite[Theorem~3.1]{Dini-these-1988} establishes the equivalence between the Riccati equation \eqref{intro-Riccati} satisfied by the formal Stieltjes function
\[
S(z):=-\sum_{n\ge0}\frac{(u)_n}{z^{n+1}},
\]
where $(u)_n$ denotes the $n$-th moment of the associated linear functional $u$, and the functional equation
\begin{equation}\label{intro-Riccati2}
(\Phi u)' + \psi u + B(x^{-1}u^2)=0, 
\qquad \psi = -\Phi' - C.
\end{equation}
Moreover, in \cite[Chapter IV, Theorem 1.1]{Dini-these-1988} is presented a structure relation characterizing Laguerre-Hahn polynomials (see \eqref{Dini_St_Rel}).\\

From a structural and constructive perspective, the Laguerre--Hahn family occupies a prominent position in the theory of orthogonal polynomials. Indeed, within the extensive catalogue of orthogonal polynomial systems, most of the orthogonal polynomial sequences studied in the literature turn out to belong to this class. In particular, either of the equations \eqref{intro-Riccati} or \eqref{intro-Riccati2} may reduce to the semiclassical situation under suitable conditions. Specifically, when $B=0$, one recovers the semiclassical case; the classical families arise when, in addition, $\deg(\Phi)\leq 2$ and $\deg(\psi)=1$ \cite{Maroni-1987}.  On the other hand, the semiclassical case may also occur even when $B$ is not identically zero. This phenomenon appears, for instance, for Stieltjes functions associated with second-degree linear functionals \cite{Maroni-1995}. More generally, a similar situation can arise in the context of third-degree classes. In fact, every linear functional of degree three belongs to the Laguerre--Hahn class \cite{Salah-Maroni-2000}, although the converse statement does not hold in general.\\

Laguerre--Hahn orthogonal polynomials can be obtained through suitable perturbations of the Stieltjes function associated with semiclassical orthogonal polynomials, or alternatively by modifying the coefficients in the second-order recurrence relation of such polynomials \cite{Askey-1984,Dehesa-1990,Maroni-1991,Ronveaux-1990}. Consequently, several well-known families of Laguerre--Hahn polynomials arise in this framework. These include, for instance, the associated polynomials related to semiclassical orthogonal polynomials \cite{Askey-1984,Belmehdi-Ronveaux-1991,Bustoz-1982,Wimp-1987}, as well as the co-recursive, co-dilated, and co-modified polynomial sequences \cite{Belmehdi-1989,Letessier-1994,Ronveaux-1990}. Despite the variety of known examples, the general problem of classifying Laguerre--Hahn orthogonal polynomials remains largely open.\\

For the strict Laguerre--Hahn orthogonal polynomials, i.e., the non-semiclassical ones, the minimal order of a differential equation of the form \eqref{Eq-Diff-general} is equal to four. In \cite{Dzoumba-these-1985}, such a fourth-order differential equation is expressed as a determinant of order five. Later, the authors in \cite{Dini-these-1988,Ronveaux-1990} obtained an alternative representation in which the fourth-order differential equation is written as a determinant of order three. See \cite{Alaya-these-1996,Bouakkaz-these} for explicit representations of the differential equation.
In this context, the works of A. Ronveaux on Laguerre--Hahn orthogonal polynomials are mainly devoted to perturbations of semiclassical orthogonal polynomials, with particular emphasis on the fourth-order differential equations satisfied by these polynomial sequences.  Within this research framework, several types of perturbations of semiclassical orthogonal polynomials have been investigated in \cite{Belmehdi-1989,  Marcellan-1989, Ronveaux-1988, Ronveaux-1991, Ronveaux-1990, Ronveaux-1989, Ronveaux-1995}.\\

The main objective of the present work is to develop a general method for deriving a fourth-order differential equation satisfied by Laguerre-Hahn orthogonal polynomials. The proposed approach is based on a systematic combination of structure relations, their derivatives, and algebraic elimination techniques, leading to a compact representation of the differential equation. This method unifies and generalizes several previous results and provides a systematic framework for deriving differential equations for both strict Laguerre-Hahn and semiclassical families.\\

It should be noted that Laguerre–Hahn linear functionals of class zero have been described in \cite{Bouakkaz-these,Bouakkaz-Maroni-1991}, both via the functional equation and the second-order recurrence relation. In the examples considered here in this work, we focus specifically on Laguerre–Hahn polynomials of class zero analogous to Hermite, for which the fourth-order differential equation has not yet been established in the literature. We correct here the coefficients of the structure relations given in \cite{Bouakkaz-these}, and, as an application of our general theoretical framework, we derive the corresponding fourth-order differential equation.\\

The manuscript is organized as follows. In Section~\ref{Section2}, we recall the necessary definitions and fundamental results concerning orthogonal polynomials, moment functionals, and Laguerre–Hahn forms. Section~\ref{Section3} is devoted to the derivation of a fourth-order linear differential equation satisfied by Laguerre–Hahn polynomials. More precisely, we first establish a set of structure relations (Theorem~\ref{lemma-4RS}) and then prove the main differential equation (Theorem~\ref{proposition-Main}). Corollaries for the semiclassical and classical cases are also discussed. 
In Section~\ref{Section4}, we systematize the constructive method as an algorithm. 
In the next two sections, we present results obtained with a symbolic implementation of that algorithm in {\it Mathematica$^{\circledR}$} for three specific families.
In Section~\ref{Section5}, we provide the four structure relations and the corresponding fourth-order differential equation for two cases of Laguerre–Hahn sequences of class zero, analogous to the Hermite family, given in \cite{Bouakkaz-these,Bouakkaz-Maroni-1991}. The classical Hermite sequence is recovered as a particular case of case 1, for which we give the structure relations and the differential equations corresponding to the semiclassical setting. The following section is devoted to an almost symmetric semiclassical family of class 1 given in \cite{Maroni-Mejri-2011} for which we provide the differential equations.  This article finishes with some conclusions.

\section{Notation and basic background}\label{Section2} 

In this section, we present some basic definitions, notations, and results that are used throughout this paper.
\subsection{Basic tools}
Let $\mathcal{P}$ denote the vector space of polynomials with complex coefficients, and let $\mathcal{P'}$ be its algebraic dual space.  
The elements of $\mathcal{P'}$ will be referred to as \emph{forms} (or linear functionals).  
The pairing between $\mathcal{P}$ and $\mathcal{P'}$ is expressed through the duality brackets $\langle \cdot, \cdot \rangle$.  
For a form $u \in \mathcal{P'}$, the sequence of complex numbers $(u)_n,~ n \geq 0$, is called the \emph{moment sequence} of $u$ relative to the monomial basis $\{x^n\}_{n \geq 0}$.  
In particular, the $n$-th moment is given by $(u)_n := \langle u, x^n \rangle$, 
so that $u$ is uniquely determined by the sequence of its moments.  

In the following, we shall refer to a sequence $\{P_n\}_{n \geq 0}$ as a \emph{polynomial sequence} (PS) if $\deg P_n = n$ for all $n \geq 0$. A \emph{monic polynomial sequence} (MPS) is a PS in which each polynomial has a leading coefficient equal to one.  
If ${\{P_{n}\}}_{n \geq 0}$ is a MPS, there exists a unique sequence
$\{u_n\}_{n\geq 0}$, $u_n\in\mathcal{P}^{\prime}$, called the dual sequence of $\{P_{n}\}_{n\geq 0}$, such that,
\begin{equation}\label{SucDual}
\langle u_{n},P_{m}\rangle=\delta_{n,m}, \quad n,m\geq 0.
\end{equation}
We say that a sequence of forms $\{v_n \}_{ n \geq 0}$ is normalised if and only if $\left( v_n  \right)_n = 1, $ $n \geq 0$, and if $n \geq 1$, then $\left( v_n  \right)_m = 0, $ $m=0, \ldots, n-1$. Thus, the dual sequence  $\{u_n\}_{n\geq 0}$ is normalised. The first form $u_0$ is called the canonical form of ${\{P_{n}\}}_{n \geq 0}$.

We now introduce some operations on $\mathcal{P'}$ following \cite{Maroni-1991}.  
For $c \in \mathbb{C},~ f,p \in \mathcal{P}$, and $u \in \mathcal{P'}$, we define
\begin{align*}
&\langle fu, p \rangle = \langle u, fp \rangle, \quad
\langle u', p \rangle = -\langle u, p' \rangle,  \\
&\langle (x-c)^{-1}u, p \rangle = \langle u, \theta_c p \rangle
= \left\langle u, \frac{p(x)-p(c)}{x-c} \right\rangle.
\end{align*}
Given $f \in \mathcal{P}$ and $u \in \mathcal{P'}$, the product $uf$ is defined by  $(u f)(x) := \left\langle u, \displaystyle\frac{x f(x)-\zeta f(\zeta)}{x-\zeta} \right\rangle $.\\
This definition allows us to introduce the \emph{Cauchy product} of two forms $u,v \in \mathcal{P'}$ by 
\[
\langle uv, f \rangle := \langle u, v f \rangle, \quad f \in \mathcal{P}.
\]

In addition, we make use of the formal Stieltjes function associated with 
$u \in \mathcal{P}'$, defined by  \cite{Maroni-1991}
\[
S(u)(z) := - \sum_{n \geq 0} \frac{(u)_n}{z^{n+1}},
\]
which provides an alternative representation of the moment sequence $\{(u)_n\}_{n \geq 0}$.  
Since the moments uniquely determine $u$, the function $S(u)(z)$ does so as well.

\medskip

A linear functional $u$ is called \emph{regular} (or \emph{quasi-definite}) if there exists a sequence of polynomials  $\{P_n\}_{n \geq 0}$ such that \cite{Chihara-1978}
\begin{equation*}
\langle u, P_n P_m \rangle = r_n \, \delta_{n,m}, \quad n, m \geq 0,
\end{equation*}
where $\{r_n\}_{n \geq 0}$ is a sequence of nonzero complex numbers and $\delta_{n,m}$ denotes the Kronecker symbol.
The sequence $\{P_n\}_{n\geq 0}$ is then said orthogonal with respect to $u$. 
Then, necessarily, $\{P_n\}_{n \geq 0}$ is a PS, $u=(u)_0u_0$, and $\{P_n\}_{n \geq 0}$ and $u$ can be normalized. In the sequel, we shall consider that each $P_n(x)$ is monic, and $(u)_0=1$ (i.e. $u=u_0$).
Henceforth, a monic orthogonal
polynomial sequence $\{P_n\}_{n\geq 0}$ will be indicated as MOPS. 

It is well known that an MOPS is characterized by the following second-order linear recurrence relation and initial conditions, respectively \cite{Chihara-1978}
\begin{eqnarray}
&& P_0(x)=1\  ,\quad  P_1(x)=x-\beta_0, \label{ic_TTRR}\\
&& P_{n+2}(x)=(x-\beta_{n+1})P_{n+1}(x)-\gamma_{n+1}P_{n}(x),~~n\geq 0, \label{TTRR}
\end{eqnarray}
being $\{\beta_n\}_{n\geq 0}$ and $\{\gamma_{n+1}\}_{n\geq 0}$ sequences of complex numbers such that $\gamma_{n+1}\neq0$ for all $ n\geq 0$.

Let  $\{P_n^{(1)}\}_{n\geq0}$ be the associated polynomial sequence of order one of the MPS  $\{ P_n\}_{n\geq0}$ with respect to the canonical form $u=u_0$. It is well known that \cite{Chihara-1978}
$$
 P_n^{(1)}(x):=(u\theta_0P_{n+1})(x)=\bigg\langle  u,\frac{P_{n+1}(x)- P_{n+1}(\xi)}{x-\xi}  \bigg\rangle.
$$
The Stieltjes function of $u^{(1)}$ is expressed in terms of that of $u$ as  \cite{Maroni-1991} 
$$
\gamma_1 S\left(u^{(1)}\right)(z)=-\frac{1}{S(u)(z)}-\left(z-\beta_0\right).
$$
More generally, the sequence of associated polynomials of order $(r+1)$, $r\geq 1$, is  defined by recursion
$$
P_n^{(r+1)}(x)=\big(  P_n^{(r)} \big)^{(1)}(x),\quad  u_n^{(r+1)}=\big(  u_n^{(r)} \big)^{(1)},\quad n, r \geq 0.
$$

If $\{ P_n\}_{n\geq0}$ is a MOPS with respect to the form $u$, then for $r\in\mathbb{N}$, the associated sequence of polynomials of order $r$,  $\left\{ P_n^{(r)}\right\}_{n\geq0}$, is also orthogonal with respect to the form $u^{(r)}$ and satisfies the following recurrence relation 
\begin{eqnarray}
&&P_0^{(r)}(x)=1, \quad P_1^{(r)}(x)=x-\beta_0^{(r)},\label{ic_ASSTTRR}\\
&&P_{n+2}^{(r)}(x)=(x-\beta_{n+1}^{(r)})P_{n+1}^{(r)}(x)-\gamma_{n+1}^{(r)}P_n^{(r)}(x),\quad n\geq0,  \label{ASSTTRR}
\end{eqnarray}
where 
\begin{equation*}
\beta_{n}^{(r)}=\beta_{n+r}, \quad \gamma_{n+1}^{(r)}=\gamma_{n+1+r},\quad n\geq0.  
\end{equation*}

We recall the definition of the $r$-perturbed sequence $\{\widetilde{P}_n\}_{n\geq 0}$, for a fixed integer $r\geq 0$, associated with a MOPS $\{P_n\}_{n\geq 0}$, as introduced in \cite{Maroni-1991}. It is an MOPS satisfying the following second-order recurrence relation 
\[
\begin{array}{l}
\widetilde{P}_0(x)=1,\quad \widetilde{P}_1(x)=x-\widetilde{\beta}_0,\\[2pt]
\widetilde{P}_{n+2}(x)=(x-\widetilde{\beta}_{n+1})\widetilde{P}_{n+1}(x)-\widetilde{\gamma}_{n+1}\widetilde{P}_n(x),\quad n\geq0,
\end{array}
\]
with
\[
\begin{aligned}
&\widetilde{\beta}_0 = \beta_0 + \mu_0,\\[2pt]
&\widetilde{\beta}_n = \beta_n + \mu_n,\quad \mu_n\in\mathbb{C};\qquad 
\widetilde{\gamma}_n = \lambda_n\gamma_n,\quad \widetilde{\gamma}_n\in\mathbb{C}\setminus\{0\},\quad 1\leq n\leq r,\\[2pt]
&\widetilde{\beta}_n = \beta_n,\quad \widetilde{\gamma}_n = \gamma_n,\quad n\geq r+1.
\end{aligned}
\]
We assume that either $\mu_r\neq 0$ or $\lambda_n\neq 1$. The so-called co-recursive case corresponds to a perturbed case of order $0$. Using the notations $\mu:=(\mu_1,\ldots,\mu_r)$, $\lambda:=(\lambda_1,\ldots,\lambda_r)$, $r\geq 1$, we write
\[
\widetilde{P}_n(x)=P_n\left(\mu_0;\,{\mu\atop\lambda}\,;r;x\right),\qquad n\geq0,
\]
and the sequence $\{\widetilde{P}_n\}_{n\geq0}$ is orthogonal with respect to the perturbed form 
$$\widetilde{u}:=u\left(\mu_0;\,{\mu\atop\lambda}\,;r\right).$$

\subsection{Laguerre-Hahn forms}

\begin{definition} \cite{Dzoumba-these-1985,Magnus-1983,Maroni-1983}
A regular form $u$, with $(u)_0=1$, is said to be a Laguerre-Hahn form if its formal Stieltjes function satisfies the Riccati equation
\begin{equation}\label{Riccati}
A(z)S'(u)(z)=B(z)S^2(u)(z)+C(z)S(u)(z)+D(z),
\end{equation}
where $A$, $B$, $C$, and $D$ are polynomials.\\
The sequence \(\{P_n\}_{n\geq 0}\) orthogonal with respect to \( u \) is also called a Laguerre-Hahn sequence. 
\end{definition}

\begin{remark} \cite{Maroni-1991}
If $A=0$ identically, the form $u$ is classified as a second-degree form. 
If $A$ is not identically zero, it may be assumed, without loss of generality, 
that it is monic; and we let $A:=\Phi$. Under this normalization, the condition $B \neq 0$ 
characterizes $u$ as a strict Laguerre-Hahn form, whereas the case $B = 0$ 
corresponds to a semiclassical form.
\end{remark}

There are several characterizations of Laguerre-Hahn forms. Some of them are listed in the following result.
\begin{proposition} \cite{Alaya-Maroni,Bouakkaz-Maroni-1991,Dini-these-1988,Maroni-1991} 
Let $u$ be a regular and normalized form, i.e.,
 $(u)_0=1$, and let $\{P_n\}_{n\geq 0}$ be its corresponding MOPS. The following statements are equivalent
\begin{enumerate}
\item[(i)] $u$ is a Laguerre-Hahn form satisfying \eqref{Riccati} with $A=\Phi$.
\item[(ii)]  \cite{Dini-these-1988} $u$ satisfies the functional equation
\begin{equation}\label{Laguerre-Hahn-EF}
(\Phi u)'+\psi u+B(x^{-1}u^2)=0,
\end{equation}
where $\Phi$, $B$, $C$, and $D$ are the polynomials in \eqref{Riccati} and
\begin{align*}
&C=-\Phi'-\psi,\\
&D=-(u\theta_0\Phi)'-(u\theta_0\psi)-(u^2\theta_0^2B).
\end{align*}
\item[(iii)] \cite{Dini-these-1988} Each polynomial \(P_n, n \geq 0\), verifies the so-called structure relation
\begin{equation}\label{Dini_St_Rel}
\Phi(x)P_{n+1}^{\prime}(x) - B(x)P_n^{(1)}(x) = \sum_{\mu=n-s}^{n+d} \theta_{n,\mu} P_\mu(x), \quad n \geq s + 1,
\end{equation}
where \(\Phi\) and \(B\) are the polynomials defined in (i),  \(\{P_n^{(1)}\}_{n \geq 0}\) is the sequence of associated orthogonal polynomials of order 1 of \(\{P_n\}_{n \geq 0}\),  
$d=\max(t,q)$, $s=\max(p-1,d-2)$, being $t$, $p$, and $q$ the degrees of  $\Phi$, $\psi$, and $B$, respectively. 
\end{enumerate}
\end{proposition}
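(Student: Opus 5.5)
I will prove the cycle (i)$\Leftrightarrow$(ii) and (ii)$\Leftrightarrow$(iii), working throughout at the level of moments and formal Stieltjes functions. The operations on $\mathcal{P}'$ recalled above translate into operations on $S(u)$.

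\emph{(i)$\Leftrightarrow$(ii).} I would use the standard dictionary, valid for $f\in\mathcal{P}$ and $u,v\in\mathcal{P}'$:
\[
S(fu)=fS(u)+(u\theta_0 f),\qquad S(u')=S(u)',\qquad S(uv)=-zS(u)S(v),\qquad S(x^{-1}w)=z^{-1}\bigl(S(w)+(w)_0\bigr).
\]
Each identity is checked on moments by comparing coefficients of $z^{-n-1}$. Applying $S$ to the left side of the functional equation \eqref{Laguerre-Hahn-EF} gives
\[
\Phi S'+\bigl(\Phi'+\psi\bigr)S-BS^2+(u\theta_0\Phi)'+(u\theta_0\psi)+(u^2\theta_0^2B)=0,
\]
where the last term comes from expanding $S(B x^{-1}u^2)$ with $S(u^2)=-zS^2$. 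Setting $C=-\Phi'-\psi$ and defining $D$ as in (ii), this is exactly the Riccati equation \eqref{Riccati}. Since $S$ is injective, the argument reverses. The only delicate point is bookkeeping of the polynomial parts coming from $\theta_0$, and I would verify it by matching coefficients.

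\emph{(ii)$\Rightarrow$(iii).} I would expand $\Phi P'_{n+1}-BP^{(1)}_n$ in the basis $\{P_\mu\}$, with coefficients $\theta_{n,\mu}=r_\mu^{-1}\langle u,(\Phi P'_{n+1}-BP_n^{(1)})P_\mu\rangle$. The degree of the left side is at most $n+d$ with $d=\max(t,q)$, which gives the upper limit of the sum. For the lower limit I would compute $\langle \Phi u,P'_{n+1}P_\mu\rangle$ by integrating by parts with \eqref{Laguerre-Hahn-EF}. This yields $\langle \psi u,P_{n+1}P_\mu\rangle+\langle B x^{-1}u^2,P_{n+1}P_\mu\rangle$ plus a term $\langle\Phi u,P_{n+1}P'_\mu\rangle$, and the last two terms vanish by orthogonality when $\mu$ is small. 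The remaining term $\langle B x^{-1}u^2,P_{n+1}P_\mu\rangle$ must be compared with $\langle u,BP_n^{(1)}P_\mu\rangle$. For this I would use the identity $u\theta_0(fg)=g\,(u\theta_0f)+u\theta_0\bigl(f(\xi)(\text{stuff})\bigr)$ together with $P_n^{(1)}=u\theta_0P_{n+1}$, to show that the difference is orthogonal to $P_\mu$ when $\mu<n-s$. This step gives $s=\max(p-1,d-2)$, and I expect it to be the \textbf{main obstacle}, because of the Cauchy-product manipulations.

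\emph{(iii)$\Rightarrow$(ii).} Pairing the structure relation with $u$ against $P_m$ for all $m$, I would conclude that the form $w=(\Phi u)'+\psi u+B(x^{-1}u^2)$, with $\psi$ reconstructed from the $\theta_{n,\mu}$, annihilates $P_m$ for all $m$ large. Hence $w=\sum_{k\le N}\lambda_k u_k$. Adjusting $\psi$ by a polynomial, as in Maroni's semiclassical proof, I would then kill these finitely many components.
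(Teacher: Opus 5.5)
The paper does not prove this proposition; it is quoted as background from Dini's thesis and Maroni's work. Your overall plan is the standard one from those sources: the Stieltjes dictionary for (i)$\Leftrightarrow$(ii), integration by parts plus orthogonality for (ii)$\Rightarrow$(iii), and a finite dual expansion for (iii)$\Rightarrow$(ii). However, one formula is wrong, one reversal is under-justified, and the key step is left open.

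\textbf{(i)$\Leftrightarrow$(ii).} Your $x^{-1}$ entry is false. Since $\langle x^{-1}w,p\rangle=\langle w,\theta_0p\rangle$ and $\theta_0 1=0$, one has $(x^{-1}w)_0=0$ and $(x^{-1}w)_n=(w)_{n-1}$, hence $S(x^{-1}w)=z^{-1}S(w)$ with no $(w)_0$ term. Your version gives $S(x^{-1}u^2)=-S^2+z^{-1}$ (because $(u^2)_0=1$), which introduces a spurious term $B(z)/z$. The equation you display is correct only with the corrected formula, combined with $S(Bv)=BS(v)+(v\theta_0B)$ for $v=x^{-1}u^2$ and the identity $(x^{-1}u^2)\theta_0B=u^2\theta_0^2B$. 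That identity holds because both sides evaluated at $z$ equal $\langle (u^2)_\xi,B[0,z,\xi]\rangle$, by symmetry of divided differences.

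Injectivity of $S$ alone does not reverse the argument, because the $D$ in (i) is an arbitrary polynomial. Set $\psi:=-\Phi'-C$, $w:=(\Phi u)'+\psi u+B(x^{-1}u^2)$ and $D^{*}:=-(u\theta_0\Phi)'-(u\theta_0\psi)-(u^2\theta_0^2B)$. The Riccati equation then gives $S(w)=D-D^{*}$. The left side lies in $z^{-1}\mathbb{C}[[z^{-1}]]$ and the right side is a polynomial, so both vanish. This yields $w=0$ and the formula for $D$ simultaneously.

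\textbf{(ii)$\Leftrightarrow$(iii).} Your ``main obstacle'' is the crux of both directions, and it is short. Because $\bigl(u(\theta_0g)\bigr)(x)=\langle u_\xi,\frac{g(x)-g(\xi)}{x-\xi}\rangle$, one has $\langle B(x^{-1}u^2),f\rangle=\langle u,u\theta_0(Bf)\rangle$. With $g=BP_\mu$,
\[
\frac{P_{n+1}(x)g(x)-P_{n+1}(\xi)g(\xi)}{x-\xi}=g(x)\,\frac{P_{n+1}(x)-P_{n+1}(\xi)}{x-\xi}+P_{n+1}(\xi)\,\frac{g(x)-g(\xi)}{x-\xi}.
\]
The last term has degree $q+\mu-1$ in $\xi$, so $u_\xi$ kills it \emph{identically} once $\mu\le n+1-q$. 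Hence $\langle B(x^{-1}u^2),P_{n+1}P_\mu\rangle=\langle u,BP^{(1)}_nP_\mu\rangle$ exactly, with no ``orthogonality to $P_\mu$'' involved. Combine this with $\mu\le n-p$ for the $\psi$-term and $\mu\le n+1-t$ for the $\Phi P'_\mu$-term; these two, not ``the last two'', are the terms that vanish. You get $\theta_{n,\mu}=0$ for $\mu\le n-\max(p,d-1)$, i.e.\ $\mu<n-s$.

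For (iii)$\Rightarrow$(ii) you need the same identity with $\mu=0$. Do not put $\psi$ into $w$ or try to read it off the $\theta_{n,\mu}$. Instead take $w_0=(\Phi u)'+B(x^{-1}u^2)$. Then $\langle w_0,P_{n+1}\rangle=-\langle u,\Phi P'_{n+1}-BP^{(1)}_n\rangle=-\theta_{n,0}=0$ for $n\ge\max(s+1,q-1)$, since only the $P_0$-coefficient survives pairing with $u$. So $w_0$ is a finite combination of the $u_k$. The relation $u_k=\langle u,P_k^2\rangle^{-1}P_ku$ converts it into $-\psi u$ with $\psi$ an explicit polynomial; this is the precise content of ``adjusting $\psi$''. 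In this direction (iii) must be read with $s$ a given integer, since $\deg\psi$ is not yet available to define it.
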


It is worth noting that the above functional equation \eqref{Laguerre-Hahn-EF} is not uniquely determined.
Indeed, if $u$ is a Laguerre--Hahn form and $\chi$ is an arbitrary polynomial, then $u$ also satisfies
\[
(\chi \Phi u)' + \bigl(\chi \psi - \chi' \Phi\bigr) u
+ (\chi B)\,(x^{-1}u^{2}) = 0.
\]
This observation motivates the following definition.
\begin{definition} \cite{Alaya-Maroni, Bouakkaz-Maroni-1991}
The class of a Laguerre-Hahn form $u$ is the non-negative integer number defined as
$$
s:=\min\max\big\{\deg{\psi}-1, \max\{\deg{\Phi}, \deg{B}\}-2\big\},
$$
where the minimum is taken among all polynomials $\Phi, \psi$ and $B$ such that $u$
satisfies \eqref{Laguerre-Hahn-EF}.
\end{definition}

Taking into account that the class of a Laguerre-Hahn form is crucial to state a hierarchy of such families, we need to give a criterion to characterize it.
\begin{proposition}\label{proposition-simplification} \cite{Alaya-Maroni,Bouakkaz-Maroni-1991}
Let $u$ be a Laguerre-Hahn form and let $\Phi$ and $\psi$ be non-zero polynomials 
such that \eqref{Laguerre-Hahn-EF} holds.
 Let
 \begin{equation}\label{s_class_LH}
 s=\max\big\{\deg{\psi}-1, \max\{\deg{\Phi}, \deg{B}\}-2\big\}.
 \end{equation} 
 Then $s$ is the class of $u$ if and only if
\begin{equation*}
\prod_{c\in\mathcal{Z}_{\Phi}}{\Big(|\Phi'(c)+\psi(c)|+|B(c)|+|\langle u, \theta_c^2\Phi+\theta_c\psi+u\theta_0\theta_c B\rangle|\Big)}\neq0,
\end{equation*}
where $\mathcal{Z}_{\Phi}$ denotes the set of zeros of $\Phi$.
\end{proposition}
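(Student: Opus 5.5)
The plan is to mirror the semiclassical simplification argument: understand exactly when the functional equation \eqref{Laguerre-Hahn-EF} can be divided by a factor $x-c$, with $c$ a zero of $\Phi$, and then show that $s$ in \eqref{s_class_LH} is minimal precisely when no such division is possible.

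\textbf{Step 1 (division lemma).} Fix $c\in\mathcal{Z}_{\Phi}$ and write $\Phi=(x-c)\Phi_c$ with $\Phi_c=\theta_c\Phi$. Also set $\psi=\psi(c)+(x-c)\theta_c\psi$ and $B=B(c)+(x-c)\theta_cB$. Put
\[
\psi_c=\theta_c\psi+\theta_c^2\Phi, \qquad B_c=\theta_cB .
\]
I will show that the form
\[
v:=(\Phi_c u)'+\psi_c u+B_c(x^{-1}u^2)
\]
satisfies
\[
(x-c)v=(\Phi u)'+\psi u+B(x^{-1}u^2)-\bigl(\Phi'(c)+\psi(c)\bigr)u-B(c)(x^{-1}u^2)+\text{(correction)}.
\]
This uses the rules $(x-c)(fu)'=((x-c)fu)'-fu$ and $\Phi_c(c)=\Phi'(c)$. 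The correction term comes from how multiplication by $x-c$ interacts with $x^{-1}u^2$. Since $\langle (x-c)^{-1}w,p\rangle$ only loses a multiple of $\delta_c$, the identity $(x-c)v=\ldots$ together with \eqref{Laguerre-Hahn-EF} gives
\[
v=-\bigl(\Phi'(c)+\psi(c)\bigr)\,(x-c)^{-1}u-B(c)\,(x-c)^{-1}(x^{-1}u^2)+\langle v,1\rangle\delta_c .
\]
Evaluating $\langle v,1\rangle$ and using $\langle x^{-1}u^2,p\rangle=\langle u,u\theta_0p\rangle$ gives
\[
\langle v,1\rangle=\langle u,\theta_c^2\Phi+\theta_c\psi+u\theta_0\theta_cB\rangle
\]
up to sign. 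Consequently, $u$ satisfies the reduced equation with $(\Phi_c,\psi_c,B_c)$ if and only if the three quantities in the product all vanish at $c$. I expect this to be the main obstacle: keeping track of the Cauchy-product term. The key point is that $x^{-1}u^2$ is an honest form, so $(x-c)^{-1}$ applies to it with the same $\delta_c$ ambiguity. What needs careful checking is the identity
\[
\theta_c\bigl(x\,(u\theta_0B)\bigr)\ \text{versus}\ u\theta_0\theta_cB .
\]

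\textbf{Step 2 (degrees).} The reduced triple lowers $\deg\Phi$ and $\deg B$ by one and gives $\deg\psi_c\le\max(\deg\psi-1,\deg\Phi-2)$. Hence its $s$-value is at most $s-1$. This proves the "only if" direction: if the product vanishes at some $c$, then $s$ is not minimal.

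\textbf{Step 3 (converse).} Suppose $u$ also satisfies an equation with $(\tilde\Phi,\tilde\psi,\tilde B)$ and $\tilde s<s$. Cross-multiply the two equations by $\tilde\Phi$ and $\Phi$ respectively. Then use regularity of $u$: if $(\chi u)'+\eta u+\kappa(x^{-1}u^2)=0$ with $\deg$ constraints, then $\chi=\eta=\kappa=0$. This should follow from the Riccati form \eqref{Riccati}, since the equation gives $\chi S'=\kappa S^2+\ldots$ with $S$ non-rational in the strict case. Alternatively, one can test against $P_n$ for large $n$.

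From this one obtains $\tilde\Phi\Phi^{-1}$ relations showing that the minimal triple is obtained from $(\Phi,\psi,B)$ by finitely many divisions by factors $x-c$, $c\in\mathcal{Z}_\Phi$, as in the semiclassical case (take $\chi=\gcd$ and compare). The first such division step yields a zero $c$ where, by Step 1, all three terms vanish. This contradicts the non-vanishing of the product and proves the "if" direction.
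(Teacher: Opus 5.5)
The paper quotes this proposition from the literature without proof, so I assess your argument on its own. Steps 1 and 2 are correct and prove the direction ``$s$ is the class $\Rightarrow$ the product is nonzero''. Two small points on Step 1. There is no correction term: multiplication of a form by polynomials is associative, including on $x^{-1}u^2$. Hence $(x-c)v$ is exactly the left side of \eqref{Laguerre-Hahn-EF} minus $(\Phi'(c)+\psi(c))u+B(c)(x^{-1}u^2)$, and $\langle v,1\rangle=\langle u,\theta_c^2\Phi+\theta_c\psi+u\theta_0\theta_cB\rangle$ with no sign ambiguity. For the converse inside Step 1 you also need that $au+b(x^{-1}u^2)=0$ forces $a=b=0$, which follows by testing on $1$ and then on $x$.

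The genuine gap is Step 3. After cross-multiplication the derivative terms cancel identically; with a derivative term allowed, your uniqueness lemma is false, since $(\Phi,\psi,B)$ itself is a nonzero solution. What you actually need is $\eta u+\kappa(x^{-1}u^2)=0\Rightarrow\eta=\kappa=0$, where $\eta=\Phi\tilde C-\tilde\Phi C$, $\kappa=\tilde\Phi B-\Phi\tilde B$ and $C=-\Phi'-\psi$. Since $S(x^{-1}u^2)=-S^2$ and $S(\eta u)=\eta S+u\theta_0\eta$, this relation reads $\kappa S^2-\eta S-p=0$ for a polynomial $p$. Non-rationality of $S$ only disposes of the case $\kappa=0$. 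You need $1,S,S^2$ to be linearly independent over $\mathbb{C}(z)$, i.e.\ $u$ not of second degree.

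This hypothesis cannot be dropped: the ``if'' direction is false for second-degree forms, so no argument (Riccati or testing on $P_n$) can prove your lemma in general. Take Chebyshev polynomials of the first kind, with $S(z)=-(z^2-1)^{-1/2}$. Then $S\bigl((x^2-1)u^2\bigr)=(z^2-1)(-zS^2)+(\text{polynomial})=-z+(\text{polynomial})$. A Stieltjes function has no polynomial part, so $(x^2-1)u^2=0$, hence $x(x^2-1)(x^{-1}u^2)=0$. Thus $u$ satisfies \eqref{Laguerre-Hahn-EF} with $(\Phi,\psi,B)=(x^2-1,-x,x(x^2-1))$. For this triple $s=1$, and $\Phi'(c)+\psi(c)=c=\pm1\neq0$ at both zeros of $\Phi$, so the product is nonzero. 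Yet the triple $(x^2-1,-x,0)$ shows the class is $0$.

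For $u$ not of second degree your outline closes, but ``take $\chi=\gcd$ and compare'' must be made precise as follows. From $\eta=\kappa=0$, the Riccati vectors $(\Phi,B,C,D)$ and $(\tilde\Phi,\tilde B,\tilde C,\tilde D)$ are proportional over $\mathbb{C}(x)$. Dividing by the gcd of the entries preserves the Riccati relation, since $\mathbb{C}((z^{-1}))$ is a field. Hence every admissible triple is $(\chi\Phi_0,\chi\psi_0-\chi'\Phi_0,\chi B_0)$ for one fixed primitive triple $(\Phi_0,\psi_0,B_0)$. A leading-term check gives $s=s_0+\deg\chi$, so if $s$ is not the class then $\deg\chi\geq1$. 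At a zero $c$ of $\chi=(x-c)\chi_c$, the reduced triple of your Step 1 is exactly $(\chi_c\Phi_0,\chi_c\psi_0-\chi_c'\Phi_0,\chi_cB_0)$, which is admissible. Step 1 then forces the factor at $c$ to vanish.
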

\begin{remark}
When it is possible to simplify by the factor \( x - c \), we obtain the new functional equation
\[ 
((\theta_c\Phi)u)' + (\theta_c\psi + \theta_c^2\Phi)u + (\theta_cB)(x^{-1}u^2) = 0. 
\]
Then \( u \) is of class less than or equal to \( s - 1 \).    
\end{remark}

Based on Proposition~\ref{proposition-simplification}, one obtains an alternative criterion to determine the class using the polynomials involved in the Riccati equation~\eqref{Riccati}.
\begin{corollary} \cite{Alaya-Maroni}
Let $u$ be a Laguerre-Hahn form and let $A=\Phi$, $B$, $C$, and $D$ be non-zero polynomials satisfying \eqref{Riccati}. Then $s$ given by \eqref{s_class_LH} is the class of $u$ if and only if the polynomials $\Phi$, $B$, $C$, and $D$ are coprime or, equivalently,
$$
\prod_{c\in\mathcal{Z}_{\Phi}}{\big(|B(c)|+|C(c)|+|D(c)|\big)}\neq0.
$$
\end{corollary}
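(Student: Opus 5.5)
The plan is to derive the corollary from Proposition~\ref{proposition-simplification}, translating each of the three quantities in the product there into quantities built from $B$, $C$, $D$ evaluated at a zero $c$ of $\Phi$. Fix $c\in\mathcal{Z}_{\Phi}$. From (ii) of the characterization proposition we have $C=-\Phi'-\psi$, so
\[
C(c)=-\bigl(\Phi'(c)+\psi(c)\bigr),
\]
and the first term $|\Phi'(c)+\psi(c)|$ equals $|C(c)|$. The second term $|B(c)|$ is common to both criteria. It therefore remains to relate the bracket $\langle u,\theta_c^2\Phi+\theta_c\psi+u\theta_0\theta_c B\rangle$ to $D(c)$.

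The key step is to evaluate
\[
D=-(u\theta_0\Phi)'-(u\theta_0\psi)-(u^2\theta_0^2B)
\]
at $x=c$. I will use the definitions $(uf)(x)=\langle u,\frac{xf(x)-\zeta f(\zeta)}{x-\zeta}\rangle$ and $u\theta_0 f=\langle u_\zeta,\frac{f(x)-f(\zeta)}{x-\zeta}\rangle$. The last expression is a polynomial in $x$, so
\[
(u\theta_0\psi)(c)=\langle u,\theta_c\psi\rangle.
\]
Differentiating in $x$ and setting $x=c$, where $\Phi(c)=0$, gives
\[
(u\theta_0\Phi)'(c)=\Bigl\langle u_\zeta,\partial_x\frac{\Phi(x)-\Phi(\zeta)}{x-\zeta}\Big|_{x=c}\Bigr\rangle .
\]
This quantity should equal $-\langle u,\theta_c^2\Phi\rangle$ up to a term involving $\Phi'(c)$. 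The identity to check is
\[
\partial_x\frac{\Phi(x)-\Phi(\zeta)}{x-\zeta}\Big|_{x=c}=\frac{\Phi'(c)}{c-\zeta}-\frac{\Phi(c)-\Phi(\zeta)}{(c-\zeta)^2},
\]
and with $\Phi(c)=0$ the right-hand side reduces to a combination of $(\theta_c^2\Phi)(\zeta)$ and $\Phi'(c)(\theta_c\cdot)$-type terms. Similarly, I will expand $(u^2\theta_0^2B)(c)$ through the Cauchy-product definition into $\langle u,u\theta_0\theta_cB\rangle$ plus terms proportional to $B(c)$.

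The expected outcome is an identity of the form
\[
D(c)=-\langle u,\theta_c^2\Phi+\theta_c\psi+u\theta_0\theta_cB\rangle+\alpha\,C(c)+\beta\,B(c)
\]
for some constants $\alpha,\beta$, for instance moments of $u$ or values involving $S$. From it, $C(c)=B(c)=0$ implies that $D(c)=0$ exactly when the bracket vanishes. Hence, for each $c$, the factor $|C(c)|+|B(c)|+|D(c)|$ vanishes if and only if the factor in Proposition~\ref{proposition-simplification} vanishes. The two products are therefore zero or nonzero together, and the corollary follows.

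For the equivalence with coprimality, I will use that $\Phi$, $B$, $C$, $D$ have a common factor if and only if they share a common root. Any such root is a zero of $\Phi$, which gives precisely the product condition. The main obstacle is the bookkeeping in the Cauchy-product term $(u^2\theta_0^2B)(c)$: unwinding $\langle u,u\,\theta_0^2B\rangle$ with the right- and left-multiplication conventions is error-prone. I will handle it by working in the Stieltjes-function picture and checking it on $B=x^k$ if needed.
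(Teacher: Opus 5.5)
Your route is the one the paper intends: it gives no proof of its own, only the remark that the corollary follows from Proposition~\ref{proposition-simplification} and the citation \cite{Alaya-Maroni}. It goes through with all of your anticipated correction terms equal to zero: $(u\theta_0\Phi)'(c)=+\langle u,\theta_c^2\Phi\rangle$ (plus, not minus as in your intermediate step) with no $\Phi'(c)$ term, because $\partial_x\frac{\Phi(x)-\Phi(\zeta)}{x-\zeta}\big|_{x=c}=(\theta_c^2\Phi)(\zeta)$ identically, and $(u^2\theta_0^2B)(c)=\langle u^2,\theta_c\theta_0B\rangle=\langle u,u\theta_0\theta_cB\rangle$ follows directly from $(w\theta_0 g)(c)=\langle w,\theta_c g\rangle$, the definition of the Cauchy product, and the commutation of $\theta_0$ and $\theta_c$. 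Hence $D(c)=-\langle u,\theta_c^2\Phi+\theta_c\psi+u\theta_0\theta_cB\rangle$ and $C(c)=-(\Phi'(c)+\psi(c))$ hold for every $c$, so the two products coincide factor by factor; this is worth stating explicitly, since a genuine $\Phi'(c)$ term, which your hedge allowed for, would have broken the argument.
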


Let $a \in \mathbb{C}\setminus\{0\}$ and $b \in \mathbb{C}$. 
If a Laguerre-Hahn form $u$ of class $s$ satisfies \eqref{Laguerre-Hahn-EF}, 
then the shifted form $\tilde{u} = (h_{a^{-1}} \circ \tau_{-b})u$ 
is also a Laguerre-Hahn form of class $s$ and satisfies \cite{Bouakkaz-these,Bouakkaz-Maroni-1991,Dini-these-1988}
\begin{equation*}
(\tilde{\Phi}\,\tilde{u})' + \tilde{\psi}\,\tilde{u}
+ \tilde{B}\bigl(x^{-1}\tilde{u}^2\bigr) = 0,
\end{equation*}
where 
$$\tilde{\Phi}(x) = a^{-\deg\Phi}\,\Phi(ax+b),\ 
\tilde{\psi}(x) = a^{1-\deg\Phi}\,\psi(ax+b),\ 
\tilde{B}(x) = a^{-\deg\Phi}B(ax+b).$$

Consequently, a shifting transformation preserves both the Laguerre--Hahn character and
the class of the form. As a result, one may work with canonical
functional equations by appropriately relocating the zeros of $\Phi$ in
\eqref{Laguerre-Hahn-EF}.\\

The above theoretical background provides the necessary tools to investigate the differential equations satisfied by Laguerre-Hahn polynomials. In the next section, we develop a systematic approach to obtain a fourth-order differential equation for such families.

\section{A fourth-order linear differential equation} \label{Section3}
This section is dedicated to the derivation of a fourth-order linear differential equation satisfied by any monic orthogonal polynomial sequence belonging to the Laguerre–Hahn class. The demonstration relies on the structure relation \eqref{R4} together with the auxiliary relations \eqref{R1}–\eqref{R3} established in Lemma~\ref{Lemma-P}. The main result is formulated in Theorem~\ref{proposition-Main}, with special attention given to the semiclassical and classical cases, which arise as particular reductions in Corollaries 
\ref{corollary-eq-diff-semiclassical} and \ref{corollary-eq-diff-classical}.

\begin{proposition}\label{proposition-FSR} \cite{Dini-these-1988,Dzoumba-these-1985,Maroni-1991}
Let \(\{P_n\}_{n \geq 0}\) be a MOPS with respect to \(u\), satisfying \eqref{ic_TTRR}-\eqref{TTRR}. The following statements are equivalent.
\begin{enumerate}
\item[(i)] \(u\) is a Laguerre-Hahn form of class \(s\) satisfying the functional equation \eqref{Laguerre-Hahn-EF}.

\item[(ii)] \(\{P_n\}_{n \geq 0}\) satisfies  the following structure relation 
\begin{align}
\Phi(x) P_{n+1}'(x) - B_0(x) P_n^{(1)} (x) =& \frac{1}{2}\big(C_{n+1}(x) - C_0(x)\big) P_{n+1} (x)\nonumber\\ 
&- \gamma_{n+1} D_{n+1} (x) P_n (x), \quad n \geq 0, \label{R4}    
\end{align}
where \(C_n\) and \(D_n\) are polynomials with coefficients depending on \(n\), such that 
$$\deg C_n \leq s + 1, \qquad \deg D_n \leq s,$$ 
satisfying the recurrence relations
\begin{align}
C_{n+1}(x) =& -C_n(x) + 2(x - \beta_n) D_n(x), \label{SR-1}\\
\gamma_{n+1} D_{n+1} (x) =& -\Phi(x) + \gamma_n D_{n-1} (x) - (x - \beta_n) C_n (x) + (x - \beta_n)^2 D_n (x), \label{SR-2}
\end{align}
for every \(n \geq 0\), with the initial conditions 
$$
B_0(x)=B(x),\quad C_0(x)=C(x),\quad D_0(x)=D(x),\quad D_{-1}(x)=B(x).
$$
\end{enumerate}
\end{proposition}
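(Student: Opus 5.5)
The plan is to pass through the Stieltjes-function level. I would work with the formal series $S=S(u)(z)$ and the Riccati equation \eqref{Riccati} with $A=\Phi$, which by the preceding proposition is equivalent to the functional equation \eqref{Laguerre-Hahn-EF}.

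\emph{(i)$\Rightarrow$(ii).} The basic tool is the standard Padé-type identity
\[
P_{n+1}(z)S(z)-P_n^{(1)}(z)=O(z^{-n-2}),
\]
that is, $P_n^{(1)}/P_{n+1}$ is the $[n/n+1]$ approximant of $-S$ up to the sign convention. Fix $n$ and set $R_n:=P_{n+1}S-P_n^{(1)}$, so that $S=(R_n+P_n^{(1)})/P_{n+1}$.

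Substituting into $\Phi S'=BS^2+CS+D$ and multiplying by $P_{n+1}^2$ gives
\begin{align*}
&\Phi\bigl(P_{n+1}P_n^{(1)\prime}-P_{n+1}'P_n^{(1)}\bigr)-BP_n^{(1)2}-CP_n^{(1)}P_{n+1}-DP_{n+1}^2\\
&\quad=\text{terms involving }R_n.
\end{align*}
Using the Liouville--Ostrogradski identity $P_{n+1}P_{n}^{(1)}-P_{n+2}P_{n-1}^{(1)}=\prod\gamma$-type relations and degree counting, the left side is a polynomial of degree at most $2n+s+2$. The right side is $O(z^{n+s+1-n-2})$ in the relevant combination. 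This kind of count shows that polynomial quantities built from $P_{n+1},P_n^{(1)},P_n,P_{n-1}^{(1)}$ have bounded degree.

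Concretely, I would define $D_{n}$ and $C_{n}$ through the $2\times2$ "Wronskian-type" expressions
\begin{align*}
\gamma_{n+1}D_{n+1}&:=\text{polynomial part of } \Phi\,(P_{n+1}'P_n^{(1)}\ldots), \\
C_{n+1}-C_0&:= \ldots ,
\end{align*}
or, more cleanly, define $C_n,D_n$ by the recurrences \eqref{SR-1}--\eqref{SR-2} from the given initial data. Then prove \eqref{R4} by induction on $n$: differentiate the three-term recurrence \eqref{TTRR} and the analogous recurrence \eqref{ASSTTRR} for $P_n^{(1)}$ (with $P^{(1)}_{n+1}=(x-\beta_{n+1})P^{(1)}_n-\gamma_{n+1}P^{(1)}_{n-1}$). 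Multiply by $\Phi$, substitute the induction hypothesis for $n$ and $n-1$, and check that the coefficients of $P_{n+2}$ and $P_{n+1}$ match exactly when $C_{n+2},D_{n+2}$ obey \eqref{SR-1}--\eqref{SR-2}.

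The base case $n=0$ reads $\Phi-B=\tfrac12(C_1-C_0)P_1-\gamma_1D_1$, and it follows from the recurrences with $D_{-1}=B$. An extra term, $+B\,P_n^{(1)}$ bookkeeping, must be tracked: because $P^{(1)}$ shares the recurrence shifted by one, the $B$ terms propagate consistently only thanks to $D_{-1}=B$.

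The degree bounds $\deg C_n\le s+1$ and $\deg D_n\le s$ do not follow from the recurrences alone, since they naively grow in degree. This is where the Riccati equation enters. I would show that
\[
D_n=\text{polynomial determined by } \Phi S_n'-B_{n}S_n^2-\ldots,
\]
that is, that $C_n,D_n$ are the coefficients of the Riccati equation satisfied by the Stieltjes function of the associated form $u^{(n)}$:
\[
\Phi S_n'=\gamma\text{-scaled }D_{n-1}S_n^2+C_nS_n+D_n.
\]
Indeed, the relation $\gamma_1S(u^{(1)})=-1/S-(z-\beta_0)$ transforms a Riccati equation into another Riccati equation. Plugging it in yields exactly \eqref{SR-1}--\eqref{SR-2} as the transformation rule for coefficients, with the new "$B$" equal to $\gamma_{n}D_{n-1}$ up to normalization. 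The degree bounds then follow by showing that each $u^{(n)}$ satisfies a Riccati equation with polynomial coefficients of the stated degrees: the leading asymptotics $S_n\sim -1/z$ force cancellation of the top-degree terms.

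\emph{(ii)$\Rightarrow$(i).} Take $n=0$ in \eqref{R4} together with the initial conditions. Apply $u$ to products of \eqref{R4} with $P_m$, or alternatively let $n\to\infty$ in the formal sense via $P_n^{(1)}/P_{n+1}\to -S$. This recovers the Riccati equation for $S$, with $\deg$ constraints giving class $\le s$.

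The main obstacle is the degree control of $C_n$ and $D_n$. I would attack it through the Riccati transformation for associated forms rather than by the direct recurrence.
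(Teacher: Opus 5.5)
The paper gives no proof of this proposition; it is quoted from the literature, so your argument has to stand on its own. Your direction (i)$\Rightarrow$(ii), along the route you call ``more cleanly'', is sound.
\begin{itemize}
\item The two-step induction gives \eqref{R4}. It uses only that $P_{n+1}$ and $P_n^{(1)}$ obey the same recurrence with $\beta_{n+1},\gamma_{n+1}$.
\item Put $S_n:=S(u^{(n)})$. The substitution $S_n=-1/(z-\beta_n+\gamma_{n+1}S_{n+1})$ carries $\Phi S_n'=\gamma_nD_{n-1}S_n^2+C_nS_n+D_n$ to the same equation at $n+1$, with exactly \eqref{SR-1}--\eqref{SR-2} as the coefficient rule.
\item Then $\deg C_{n+1}\le\max(\deg C_n,\deg D_n+1)$, and $D_{n+1}=\Phi S_{n+1}'-\gamma_{n+1}D_nS_{n+1}^2-C_{n+1}S_{n+1}$ has no power above $z^s$.
\end{itemize}
This needs $\deg\Phi,\deg B\le s+2$ and $\deg C\le s+1$, i.e.\ the equation in (i) must be one that realizes the class. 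The opening Pad\'e/Liouville--Ostrogradski paragraph plays no role and can be dropped; the correct identity is $P_{n+1}S+P_n^{(1)}=O(z^{-n-2})$.

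The genuine gap is (ii)$\Rightarrow$(i). Your own induction shows that \eqref{R4} follows from \eqref{SR-1}--\eqref{SR-2} for \emph{every} MOPS and \emph{arbitrary} initial polynomials $\Phi,B,C,D$. So \eqref{R4} at $n=0$, or at any fixed $n$, is a tautology that carries no information about $u$. The only content of (ii) is that the recursively generated $C_n,D_n$ stay of bounded degree, and your sketch never uses this.
\begin{itemize}
\item Letting $n\to\infty$ does not work as stated. After dividing \eqref{R4} by $P_{n+1}$, the terms $\Phi P_{n+1}'/P_{n+1}$, $C_{n+1}$ and $\gamma_{n+1}D_{n+1}P_n/P_{n+1}$ have no limit.
\item Pairing \eqref{R4} with $u$ can at best yield $(\Phi u)'+\psi u+B(x^{-1}u^2)=\chi u$ for some polynomial $\chi$. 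Even this requires correcting for $u\theta_0(BP_{n+1})\neq BP_n^{(1)}$ at small $n$. Showing $\chi=0$ then needs a further argument.
\end{itemize}

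You can close the gap with the tools you already have. With $\gamma_0D_{-1}=B$, set
\[
E_n:=\Phi S_n'-\gamma_nD_{n-1}S_n^2-C_nS_n-D_n\in\mathbb{C}((z^{-1})).
\]
The same substitution together with \eqref{SR-1}--\eqref{SR-2} gives exactly $E_n=\gamma_{n+1}S_n^2E_{n+1}$. Hence $E_n=E_0/(\gamma_1\cdots\gamma_nS_0^2\cdots S_{n-1}^2)$. Since $S_k=-z^{-1}(1+O(z^{-1}))$, if $E_0\neq0$ has leading term $ez^m$, then $E_n$ has leading term of degree $m+2n$.

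On the other hand, \eqref{SR-2} for $n\ge1$ forces $\deg\Phi\le s+2$. With $\deg C_n\le s+1$ and $\deg D_n\le s$, no $E_n$ then contains a power above $z^s$. This contradicts the growth $m+2n$ for large $n$, so $E_0=0$. That is \eqref{Riccati} with $\Phi,B,C,D$, and the earlier characterization gives \eqref{Laguerre-Hahn-EF}.

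Finally, the degree bounds only give class $\le s$. Equality requires the coprimality criterion, so ``class $s$'' in (ii)$\Rightarrow$(i) must be read with that proviso.
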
 

In addition to the previous structure relation~\eqref{R4}, the following lemma gathers the remaining key relations required to establish our main result.

\begin{lemma}\label{Lemma-P} \cite{Alaya-these-1996,Dini-these-1988,Dzoumba-these-1985}
Let \(\{P_n\}_{n \geq 0}\) be a Laguerre-Hahn MOPS. For every $n\geq0$, we have
\begin{align}
\Phi(x)(P^{(1)}_{n-1}(x))' =& D_{n}(x)P^{(1)}_{n}(x) - \frac{1}{2}\left(C_{n+1}(x) - C_0(x)\right)P^{(1)}_{n-1}(x) - D_0(x)P_{n}(x), \label{R1}\\
\Phi(x)(P_{n}(x))' =& D_{n}(x)P_{n+1}(x) - \frac{1}{2}\left(C_{n+1}(x) + C_0(x)\right)P_{n}(x) + B_0(x)P^{(1)}_{n-1}(x), \label{R2} \\
\Phi(x)(P^{(1)}_{n}(x))' =& \frac{1}{2}\left(C_{n+1}(x) + C_0(x)\right)P^{(1)}_{n}(x) - \gamma_{n+1}D_{n+1}(x)P^{(1)}_{n-1}(x) - D_0(x)P_{n+1}(x). \label{R3}
\end{align}
\end{lemma}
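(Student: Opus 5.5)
The plan is to derive all three relations from the structure relation \eqref{R4} of Proposition~\ref{proposition-FSR}, the recurrences \eqref{SR-1}--\eqref{SR-2}, and the three-term recurrences \eqref{TTRR} and \eqref{ASSTTRR} with $r=1$. Throughout I use the convention $P^{(1)}_{-1}=0$, which is consistent with \eqref{ASSTTRR} read at $n=-1$.

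\emph{Step 1: \eqref{R2} from \eqref{R4}.} I write \eqref{R4} at index $n-1$ (for $n\ge1$):
\[
\Phi P_n'-B_0P^{(1)}_{n-1}=\tfrac12(C_n-C_0)P_n-\gamma_nD_nP_{n-1}.
\]
I then eliminate $P_{n-1}$ using \eqref{TTRR}, namely $-\gamma_nP_{n-1}=P_{n+1}-(x-\beta_n)P_n$. By \eqref{SR-1}, $\tfrac12C_n-(x-\beta_n)D_n=-\tfrac12C_{n+1}$, so the coefficient of $P_n$ becomes $-\tfrac12(C_{n+1}+C_0)$, and \eqref{R2} follows. For $n=0$ the left side vanishes. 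Since $\tfrac12(C_1+C_0)=(x-\beta_0)D_0$ by \eqref{SR-1}, the right side is $D_0P_1-(x-\beta_0)D_0=0$, so the relation holds there too.

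\emph{Step 2: \eqref{R1} at $n+1$ is equivalent to \eqref{R3} at $n$.} I use the recurrence $P^{(1)}_{n+1}=(x-\beta_{n+1})P^{(1)}_n-\gamma_{n+1}P^{(1)}_{n-1}$ to rewrite $D_{n+1}P^{(1)}_{n+1}$ in \eqref{R1} (with $n\to n+1$). Applying \eqref{SR-1} at index $n+1$ in the form $-\tfrac12C_{n+2}+(x-\beta_{n+1})D_{n+1}=\tfrac12C_{n+1}$ turns the coefficient of $P^{(1)}_n$ into $\tfrac12(C_{n+1}+C_0)$. This is exactly \eqref{R3} at $n$. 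It remains to check \eqref{R1} at $n=0$ directly: both sides are $0$, since $P^{(1)}_{-1}=0$ and $D_0P^{(1)}_0-D_0P_0=0$.

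\emph{Step 3: \eqref{R3} by induction on $n$.} For the base case $n=0$, the left side is $0$ and the right side is $\tfrac12(C_1+C_0)-D_0P_1=(x-\beta_0)D_0-(x-\beta_0)D_0=0$. For the inductive step, I assume \eqref{R3} at $n$ and at $n-1$; by Step~2 the latter is \eqref{R1} at $n$. I differentiate the associated recurrence and multiply by $\Phi$:
\[
\Phi(P^{(1)}_{n+1})'=\Phi P^{(1)}_n+(x-\beta_{n+1})\Phi(P^{(1)}_n)'-\gamma_{n+1}\Phi(P^{(1)}_{n-1})'.
\]
Into this I substitute \eqref{R3} at $n$ for $\Phi(P^{(1)}_n)'$ and \eqref{R1} at $n$ for $\Phi(P^{(1)}_{n-1})'$. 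The result is a combination of $P^{(1)}_n$, $P^{(1)}_{n-1}$, $P_{n+1}$ and $P_n$. I then use \eqref{TTRR} to express $P_n$ through $P_{n+1}$ and $P_{n+2}$, and the associated recurrence to pass to the basis $P^{(1)}_{n+1},P^{(1)}_n$. The target is the right side of \eqref{R3} at $n+1$, namely $\tfrac12(C_{n+2}+C_0)P^{(1)}_{n+1}-\gamma_{n+2}D_{n+2}P^{(1)}_n-D_0P_{n+2}$.

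\emph{Main obstacle.} The hard part is matching the coefficients in the inductive step. The $D_0$ terms combine directly through \eqref{TTRR}. The coefficient of $P^{(1)}_n$ must be reduced to $-\gamma_{n+2}D_{n+2}$, and this requires \eqref{SR-2} at index $n+1$ together with \eqref{SR-1} to eliminate $C_{n+2}$. The explicit $\Phi$ appearing in \eqref{SR-2} is what absorbs the extra term $\Phi P^{(1)}_n$ produced by differentiating the recurrence. If the bookkeeping becomes unwieldy, I would instead first prove \eqref{R3} from \eqref{R2} and \eqref{R4} by differentiating the Casorati identity $P_{n+1}P^{(1)}_{n-1}-P_nP^{(1)}_n=-\gamma_1\cdots\gamma_n$. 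Its derivative vanishes, which gives one linear relation among the derivatives, and \eqref{R3} would then follow by solving for $(P^{(1)}_n)'$.
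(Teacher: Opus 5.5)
The paper does not prove Lemma~\ref{Lemma-P}. It imports \eqref{R1}--\eqref{R3} from the cited theses, so your argument is an independent derivation, and it is correct.

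Step~1 is exactly the observation that \eqref{R2} at $n$ is \eqref{R4} at $n-1$, rewritten through \eqref{TTRR} and \eqref{SR-1}. Step~2 correctly identifies \eqref{R1} at $n+1$ with \eqref{R3} at $n$.

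The bookkeeping you flag as the main obstacle in Step~3 does close. Put $y=x-\beta_{n+1}$, substitute \eqref{R3} and \eqref{R1} at $n$, and eliminate $P^{(1)}_{n-1}$ by the associated recurrence. The $D_0$-terms give $-D_0(yP_{n+1}-\gamma_{n+1}P_n)=-D_0P_{n+2}$. The coefficients of $P^{(1)}_{n+1}$ and $P^{(1)}_n$ become, respectively,
\[
yD_{n+1}-\tfrac12C_{n+1}+\tfrac12C_0=\tfrac12\bigl(C_{n+2}+C_0\bigr),\qquad
\Phi+yC_{n+1}-\gamma_{n+1}D_n-y^2D_{n+1}=-\gamma_{n+2}D_{n+2}.
\]
The first identity is \eqref{SR-1}. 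The second is \eqref{SR-2} at index $n+1$ alone, so \eqref{SR-1} is not needed to eliminate $C_{n+2}$ in that coefficient.

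What your route gains over quoting the result is transparency. It shows that \eqref{R1} and \eqref{R3} use neither \eqref{R4}, nor $B_0$, nor the Laguerre--Hahn hypothesis. Indeed, the defect of \eqref{R3} satisfies the associated three-term recurrence with zero initial values.

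The same defect argument also reproves \eqref{R4} itself, starting from \eqref{R4} at $n=0$; that case reduces to \eqref{SR-1}--\eqref{SR-2} at $n=0$ under the initial conditions. So the genuinely Laguerre--Hahn content of Proposition~\ref{proposition-FSR} lies in the bounds $\deg C_n\le s+1$ and $\deg D_n\le s$, which the lemma does not involve.

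Your fallback via the differentiated Casorati identity would not work on its own as described. That single relation contains both $(P^{(1)}_{n-1})'$ and $(P^{(1)}_n)'$, so it still needs \eqref{R1} at $n$ as input. The main argument makes it unnecessary.
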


The following theorem plays an important role in proving our main result. 

\begin{theorem}\label{lemma-4RS}
Let \(\{P_n\}_{n \geq 0}\) be a Laguerre-Hahn MOPS. The following relations hold, for every $n\geq0$, 
\begin{align}
G_{0,1}(x;n)P^{(1)}_{n-1}(x)+G_{1,1}(x;n)P^{(1)}_{n}(x)+H_{1}(x;n) P_{n}(x)=F_{1}(x;n), \label{(4.2.5.1)S1}\\
G_{0,2}(x;n)P^{(1)}_{n-1}(x)
+ G_{1,2}(x;n)P^{(1)}_{n}(x)
+ H_{2}(x;n)P_{n}(x)
= F_{2}(x;n), \label{(4.2.5.1)S2}\\
G_{0,3}(x;n)P^{(1)}_{n-1}(x)+G_{1,3}(x;n)P^{(1)}_{n}(x)+H_{3}(x;n) P_{n}(x)=F_{3}(x;n), \label{(4.2.5.1)S3}\\
G_{0,4}(x;n)P^{(1)}_{n-1}(x)+G_{1,4}(x;n)P^{(1)}_{n}(x)+H_{4}(x;n) P_{n}(x)=F_{4}(x;n), \label{(4.2.5.1)S4}
\end{align}
with
\begin{align}
G_{0,1}(x;n)=&0, \quad n\geq 0,  \label{G01}\\
G_{1,1}(x;n)=&B_{0}(x), \quad n\geq 0,  \label{G11}\\
H_{1}(x;n)=&-\gamma_{n+1}D_{n+1}(x), \quad n\geq 0, \label{H1}\\
F_1(x;n)=&\Phi(x)P'_{n+1}(x)+M_{0,1}(x;n)P_{n+1}(x), \quad n\geq 0,  \label{F1}\\
G_{0,2}(x;n)=&-2\gamma_{n+1}B_{0}(x)D_{n+1}(x), \quad n\geq 0, \label{G02}\\
G_{1,2}(x;n)=&\frac{1}{2}\big{(}C_{n+1}(x)+C_{0}(x)\big{)}B_{0}(x)+\Phi(x)B^{\prime}_{0}(x), \quad n\geq 0, \label{G12}\\
H_{2}(x;n)=&\gamma_{n+1}\left\{\frac{1}{2}\big{(}C_{n+1}(x)+C_{0}(x)\big{)}D_{n+1}(x)-\Phi(x)D^{\prime}_{n+1}(x)\right\}, \quad n\geq 0, \label{H2}\\
F_{2}(x;n)=&\Phi^2(x)P''_{n+1}(x)+M_{1,2}(x;n)P'_{n+1}(x)+M_{0,2}(x;n)P_{n+1}(x), \quad n\geq 0, \label{F2}\\
G_{0,3}(x;n)=&\gamma_{n+1}\Big{\{}D_{n+1}(x)\big{(}C_{n+1}(x)-C_{0}(x)\big{)}B_{0}(x)-3\Phi(x)\big{(}B_{0}(x)D_{n+1}(x)\big{)}^{\prime}\Big{\}},\quad n\geq 0, \label{G03}\\
G_{1,3}(x;n)=&B_{0}(x)\bigg\{\frac{1}{4}\big{(}C_{n+1}(x)+C_{0}(x)\big{)}^{2} -2\gamma_{n+1}D_{n}(x)D_{n+1}(x) \nonumber \\
& +\frac{1}{2}\Phi(x)\big{(}C_{n+1}(x)+C_{0}(x)\big{)}^{\prime}\bigg\} +\Phi(x)B^{\prime}_{0}(x)\big{\{}C_{n+1}(x)+C_{0}(x)+\Phi^{\prime}(x)\big{\}}\nonumber\\
&+\Phi^{2}(x)B^{\prime\prime}_{0}(x), \quad n\geq 0, \label{G13}\\
H_{3}(x;n)=&\gamma_{n+1} \Big{\{}2B_{0}(x)D_{0}(x)+\frac{1}{2}\big{(}C_{n+1}(x)+C_{0}(x)\big{)}^{\prime}\Phi(x)-\frac{1}{4}\big{(}C_{n+1}(x)+C_{0}(x)\big{)}^{2}\Big{\}}\nonumber\\ 
&\times D_{n+1}(x)+\gamma_{n+1} \Phi(x)\Big{(}C_{n+1}(x)+C_{0}(x)-\Phi^{\prime}(x)\Big{)}D^{\prime}_{n+1}(x)\nonumber\\
&-\gamma_{n+1}\Phi^{2}(x)D^{\prime\prime}_{n+1}(x), \quad n\geq 0, \label{H3}\\
F_{3}(x;n)=&\Phi^3(x)P^{(3)}_{n+1}(x)+M_{2,3}(x;n)P''_{n+1}(x)\nonumber\\
&+M_{1,3}(x;n)P'_{n+1}(x)+M_{0,3}(x;n)P_{n+1}(x), \quad n\geq 0, \label{F3}\\
G_{0,4}(x;n)=&-\frac{1}{2}(C_{n+1}(x)-C_0(x))G_{0,3}(x;n)-\gamma _{n+1}D_{n+1}(x)G_{1,3}(x;n) \nonumber\\
&+B_0(x)H_3(x;n)+\Phi(x)G'_{0,3}(x;n),\quad n\geq 0, \label{G04}\\
G_{1,4}(x;n)=&D_n(x)G_{0,3}(x;n)+\frac{1}{2}(C_{n+1}(x)+C_0(x))G_{1,3}(x;n)+\Phi(x)G'_{1,3}(x;n),\quad n\geq 0, \label{G14}\\
H_{4}(x;n)=&-D_0(x)G_{0,3}(x;n)-\frac{1}{2}(C_{n+1}(x)+C_0(x))H_3(x;n)+\Phi(x)H'_3(x;n),\quad n\geq 0, \label{H4}\\
F_{4}(x;n)=&\Phi^4(x)P^{(4)}_{n+1}(x)+M_{3,4}(x;n)P^{(3)}_{n+1}(x)+M_{2,4}(x;n)P''_{n+1}(x) \nonumber\\
& +M_{1,4}(x;n)P'_{n+1}(x)+M_{0,4}(x;n)P_{n+1}(x),\quad n\geq 0, \label{F4} 
\end{align}
where
\begin{align}
M_{0,1}(x;n)=&-\frac{1}{2}\left(C_{n+1}(x)- C_{0}(x)\right), \quad n\geq 0, \label{M01}\\
M_{1,2}(x;n)=&\Phi(x)\left(\Phi'(x)+M_{0,1}(x;n)\right), \quad n\geq 0, \label{M12}\\
M_{0,2}(x;n)=&\Phi(x)M'_{0,1}(x;n)+B_0(x)D_0(x)+\gamma_{n+1}D_n(x)D_{n+1}(x), \quad n\geq 0, \label{M02}\\
M_{2,3}(x;n)=&\Phi(x)\left(M_{1,2}(x;n)+2\Phi(x)\Phi'(x)\right), \quad n\geq 0, \label{M23}\\
M_{1,3}(x;n)=&\Phi(x)\left(M_{0,2}(x;n)+M'_{1,2}(x;n)\right), \quad n\geq 0, \label{M13}\\
M_{0,3}(x;n)=&\Phi(x)M'_{0,2}(x;n)+D_0(x)G_{1,2}(x;n)-D_n(x)H_2(x;n), \quad n\geq 0, \label{M03}\\
M_{3,4}(x;n)=&\Phi(x)\left\{M_{2,3}(x;n)+3\Phi^2(x)\Phi'(x)\right\}, \quad n\geq 0, \label{M34}\\
M_{2,4}(x;n)=&\Phi(x)\left\{M'_{2,3}(x;n)+M_{1,3}(x;n)\right\}, \quad n\geq 0, \label{M24}\\
M_{1,4}(x;n)=&\Phi(x)\left\{M'_{1,3}(x;n)+M_{0,3}(x;n)\right\}, \quad n\geq 0, \label{M14}\\
M_{0,4}(x;n)=&\Phi(x)M'_{0,3}(x;n)+D_0(x)G_{1,3}(x;n)-D_n(x)H_3(x;n), \quad n\geq 0. \label{M04}
\end{align} 
\end{theorem}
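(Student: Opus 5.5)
The plan is an induction on the order of differentiation. Each relation \eqref{(4.2.5.1)S{k+1}} comes from \eqref{(4.2.5.1)S{k}}: multiply by $\Phi$, differentiate, and then eliminate every derivative of $P^{(1)}_{n-1}$, $P^{(1)}_n$ and $P_n$ using Lemma~\ref{Lemma-P}. This keeps the left-hand side in the span of $\{P^{(1)}_{n-1},P^{(1)}_n,P_n\}$ and pushes all derivatives of $P_{n+1}$ to the right-hand side.

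\textbf{Base case.} Relation \eqref{(4.2.5.1)S1} is just \eqref{R4}. Move $\frac12(C_{n+1}-C_0)P_{n+1}$ to the right, which gives $M_{0,1}$ as in \eqref{M01}. Multiply the whole relation by $-1$. This yields $G_{0,1}=0$, $G_{1,1}=B_0$, $H_1=-\gamma_{n+1}D_{n+1}$ and $F_1$ as stated, after checking the sign convention against \eqref{R4}, where $-B_0P^{(1)}_n$ appears.

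\textbf{Inductive step.} Write a generic relation $G_0P^{(1)}_{n-1}+G_1P^{(1)}_n+HP_n=F$. Multiply by $\Phi$ and differentiate:
\[
\Phi G_0' P^{(1)}_{n-1}+G_0\,\Phi (P^{(1)}_{n-1})'+\Phi G_1'P^{(1)}_n+G_1\,\Phi(P^{(1)}_n)'+\Phi H'P_n+H\,\Phi P_n'=\Phi F'.
\]
Substitute \eqref{R1}, \eqref{R3} and \eqref{R2} for the three terms $\Phi(\cdot)'$. This produces extra terms in $P_{n+1}$, coming from $-D_0P_{n+1}$ in \eqref{R3} and $D_nP_{n+1}$ in \eqref{R2}, and these go to the right-hand side. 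Note that \eqref{R1} contributes $-D_0P_n$ and $D_nP^{(1)}_n$, so $P_n$ and $P^{(1)}_n$ receive cross contributions. Collecting coefficients gives the general recursion:
\begin{align*}
G_0^{new}&=\Phi G_0'-\tfrac12(C_{n+1}-C_0)G_0-\gamma_{n+1}D_{n+1}G_1+B_0H,\\
G_1^{new}&=\Phi G_1'+D_nG_0+\tfrac12(C_{n+1}+C_0)G_1,\\
H^{new}&=\Phi H'-D_0G_0-\tfrac12(C_{n+1}+C_0)H,\\
F^{new}&=\Phi F'+(D_0G_1-D_nH)P_{n+1}.
\end{align*}
These are exactly \eqref{G04}--\eqref{H4} and \eqref{M04} when $k=3$. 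For $F$, $\Phi F'$ applied to $\Phi^kP^{(k)}_{n+1}+\sum_j M_{j,k}P^{(j)}_{n+1}$ gives $\Phi^{k+1}P^{(k+1)}_{n+1}$ plus the shifted coefficients
\[
M_{j,k+1}=\Phi\bigl(M_{j-1,k}+M'_{j,k}\bigr),
\]
with $M_{k,k}=\Phi^k$ and $M_{k-1,k}$ absorbing $k\Phi^{k}\Phi'$. The constant term is $M_{0,k+1}=\Phi M'_{0,k}+D_0G_{1,k}-D_nH_k$. These are the formulas \eqref{M12}--\eqref{M04}.

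\textbf{Explicit forms for $k=2,3$.} Specializing the recursion from $(G_{0,1},G_{1,1},H_1)$ gives \eqref{G02}--\eqref{H2} immediately; for instance $G_{0,2}=-\gamma_{n+1}D_{n+1}B_0+B_0(-\gamma_{n+1}D_{n+1})$. Applying it once more gives \eqref{G03}--\eqref{H3}.

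The main obstacle is matching this last application with the printed forms. In particular, $G_{1,3}$ contains $-2\gamma_{n+1}D_nD_{n+1}$ and derivative terms $(C_{n+1}+C_0)'$. Reducing to the printed form may require rewriting with \eqref{SR-1}--\eqref{SR-2}, for example replacing $\frac12(C_{n+1}-C_0)$ combinations, or using identities like $\Phi B_0'$ arising from differentiating $G_{1,2}$. I would carry out this bookkeeping term by term, and if discrepancies remain, check whether the recurrences \eqref{SR-1}--\eqref{SR-2} (which make $\Phi'$ and $D_n$ terms interchangeable) reconcile them. The validity for all $n\ge0$ follows since Lemma~\ref{Lemma-P} and \eqref{R4} hold for $n\ge0$, with the convention $P^{(1)}_{-1}=0$.
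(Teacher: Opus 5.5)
Your proposal is correct and follows the paper's proof. Rewrite \eqref{R4} as the first relation. Then differentiate and multiply by $\Phi$, in that order as your display shows rather than the reverse your wording suggests. Finally eliminate the derivatives via \eqref{R1}--\eqref{R3}; this yields exactly your general recursion for $G_0$, $G_1$, $H$, $F$ and the $M_{j,k}$, which is what the paper does.

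The obstacle you anticipate does not arise. Applying the recursion to $(G_{0,2},G_{1,2},H_2)$ reproduces \eqref{G03}--\eqref{H3} term by term, with no need for \eqref{SR-1}--\eqref{SR-2}. For example, in $G_{0,3}$ the $\tfrac12(C_{n+1}+C_0)B_0D_{n+1}$ terms cancel, and $-\gamma_{n+1}\Phi(B_0'D_{n+1}+B_0D_{n+1}')$ merges with $-2\gamma_{n+1}\Phi(B_0D_{n+1})'$ to give the factor $3$.
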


\begin{proof}
The proof is carried out by successive differentiation and substitution using the relations stated in Lemma~\ref{Lemma-P}. 

Firstly, equation~\eqref{R4} can be rewritten as
\begin{equation}\label{RS-m}
B_{0}(x)P^{(1)}_{n}(x) - \gamma_{n+1}D_{n+1}(x)P_{n}(x) 
= \Phi(x)P'_{n+1}(x) + M_{0,1}(x;n)P_{n+1}(x), \quad n\geq 0,
\end{equation}
with \(M_{0,1}(x;n)\) given by \eqref{M01}. This is precisely \eqref{(4.2.5.1)S1} with the coefficients \(G_{0,1}(x;n)\), \(G_{1,1}(x;n)\), \(H_{1}(x;n)\), and \(F_{1}(x;n)\) given by \eqref{G01}, \eqref{G11}, \eqref{H1}, and \eqref{F1}, respectively.
 
Next, taking the derivative of~\eqref{RS-m} and multiply both sides of the resulting equation by \(\Phi(x)\), which yields
\begin{align*}
&\Phi(x)B_{0}'(x)P^{(1)}_{n}(x) + \Phi(x)B_{0}(x)\big(P^{(1)}_{n}(x)\big)' \\
&- \gamma_{n+1}\Phi(x)D_{n+1}'(x)P_{n}(x) - \gamma_{n+1}\Phi(x)D_{n+1}(x)P_{n}'(x) \\
&= \Phi(x)\Phi'(x)P'_{n+1}(x) + \Phi^{2}(x)P''_{n+1}(x)\\
&+ \Phi(x)M_{0,1}'(x;n)P_{n+1}(x) + \Phi(x)M_{0,1}(x;n)P'_{n+1}(x), \quad n\geq 0.
\end{align*}
Using the relations \eqref{R2} and \eqref{R3}, the last becomes
\[
\begin{aligned}
&\Phi(x)B_{0}'(x)P^{(1)}_{n}(x)
+ \frac12 B_{0}(x)\big(C_{n+1}(x)+C_{0}(x)\big)P^{(1)}_{n}(x)
- \gamma_{n+1}B_{0}(x)D_{n+1}(x)P^{(1)}_{n-1}(x)\\
&- B_{0}(x)D_{0}(x)P_{n+1}(x) -\gamma_{n+1}\Phi(x)D_{n+1}'(x)P_{n}(x)
-\gamma_{n+1}D_{n+1}(x)D_{n}(x)P_{n+1}(x) \\
&+ \frac12\gamma_{n+1}D_{n+1}(x)\big(C_{n+1}(x)+C_{0}(x)\big)P_{n}(x)  
- \gamma_{n+1}D_{n+1}(x)B_{0}(x)P^{(1)}_{n-1}(x) \\
&= \Phi^{2}(x)P''_{n+1}(x)
+ \Phi(x)\big(\Phi'(x)+M_{0,1}(x;n)\big)P'_{n+1}(x)
+ \Phi(x)M_{0,1}'(x;n) P_{n+1}(x), \quad n\geq 0.
\end{aligned}
\]
By collecting all terms involving \(P^{(1)}_{n-1}(x)\), \(P^{(1)}_{n}(x)\), and \(P_{n}(x)\) on the left-hand side, and transferring all remaining terms to the right-hand side, we obtain the relation~\eqref{(4.2.5.1)S2}, with \(G_{0,2}(x;n)\), \(G_{1,2}(x;n)\), \(H_{2}(x;n)\), and \(F_{2}(x;n)\) given by \eqref{G02}, \eqref{G12}, \eqref{H2}, and \eqref{F2}, respectively.

Upon differentiating~\eqref{(4.2.5.1)S2} and multiplying both sides of the resulting identity by \(\Phi(x)\), we obtain
\[
\begin{aligned}
\Phi(x)G_{0,2}'(x;n)P^{(1)}_{n-1}(x) &+ \Phi(x)G_{0,2}(x;n)\big(P^{(1)}_{n-1}(x)\big)' \\
&+ \Phi(x)G_{1,2}'(x;n)P^{(1)}_{n}(x) + \Phi(x)G_{1,2}(x;n)\big(P^{(1)}_{n}(x)\big)' \\
&+ \Phi(x)H_{2}'(x;n)P_{n}(x) + \Phi(x)H_{2}(x;n)P_{n}'(x) = \Phi(x)F_{2}'(x;n), \quad n\geq 0.
\end{aligned}
\]

By means of formulas~\eqref{R1}--\eqref{R3}, and after grouping the terms containing \(P^{(1)}_{n-1}(x)\), \(P^{(1)}_{n}(x)\), and \(P_{n}(x)\), we arrive at
\begin{align}
&\Big(\Phi(x)G_{0,2}'(x;n) - \tfrac12\big(C_{n+1}(x)-C_{0}(x)\big)G_{0,2}(x;n) \nonumber\\
& - \gamma_{n+1}D_{n+1}(x)G_{1,2}(x;n) + B_{0}(x)H_{2}(x;n)\Big)P^{(1)}_{n-1}(x) \nonumber\\
&+\Big(D_{n}(x)G_{0,2}(x;n) + \Phi(x)G_{1,2}'(x;n) 
   + \tfrac12\big(C_{n+1}(x)+C_{0}(x)\big)G_{1,2}(x;n)\Big)P^{(1)}_{n}(x) \nonumber\\
&+\Big(-D_{0}(x)G_{0,2}(x;n) + \Phi(x)H_{2}'(x;n) 
   - \tfrac12\big(C_{n+1}(x)+C_{0}(x)\big)H_{2}(x;n)\Big)P_{n}(x) \nonumber\\
&= \Phi(x)F_{2}'(x;n) + D_{0}(x)G_{1,2}(x;n)P_{n+1}(x) - D_{n}(x)H_{2}(x;n)P_{n+1}(x), \quad n\geq 0.\label{E2-}
\end{align}
On the other hand, it can be readily checked that
\[
\begin{aligned}
F_{2}'(x;n) =& \Phi^{2}(x)P^{(3)}_{n+1}(x) + \big[2\Phi(x)\Phi'(x) + M_{1,2}(x;n)\big]P''_{n+1}(x) \\
&+ \big[M_{1,2}'(x;n) + M_{0,2}(x;n)\big]P'_{n+1}(x) + M_{0,2}'(x;n)P_{n+1}(x), \quad n\geq 0.
\end{aligned}
\]
Thus, by substituting this expression together with the explicit forms of
\(G_{0,2}(x;n)\), \(G_{1,2}(x;n)\), \(H_{2}(x;n)\), and \(F_{2}(x;n)\) into~\eqref{E2-},
we obtain the desired relation~\eqref{(4.2.5.1)S3}.

Finally, proceeding in the same manner as for the previous relations, we
differentiate~\eqref{(4.2.5.1)S3} and multiply both sides of the resulting
identity by \(\Phi(x)\), we obtain
\begin{align*}
\Phi(x)G_{0,3}'(x;n)P^{(1)}_{n-1}(x) &+ \Phi(x)G_{0,3}(x;n)\big(P^{(1)}_{n-1}(x)\big)' \\
&+ \Phi(x)G_{1,3}'(x;n)P^{(1)}_{n}(x) + \Phi(x)G_{1,3}(x;n)\big(P^{(1)}_{n}(x)\big)' \\
&+ \Phi(x)H_{3}'(x;n)P_{n}(x) + \Phi(x)H_{3}(x;n)P_{n}'(x) = \Phi(x)F_{3}'(x;n), \quad n\geq 0.
\end{align*}
Once again, by means of relations~\eqref{R1}--\eqref{R3}, and upon collecting the terms involving
\(P^{(1)}_{n-1}(x)\), \(P^{(1)}_{n}(x)\), and \(P_{n}(x)\), we deduce that
\begin{align}
&\Big(\Phi(x)G_{0,3}'(x;n) - \tfrac12\big(C_{n+1}(x)-C_{0}(x)\big)G_{0,3}(x;n) \nonumber\\
& - \gamma_{n+1}D_{n+1}(x)G_{1,3}(x;n) + B_{0}(x)H_{3}(x;n)\Big)P^{(1)}_{n-1}(x) \nonumber \\
&+\Big( D_{n}(x)G_{0,3}(x;n) + \Phi(x)G_{1,3}'(x;n) + \tfrac12\big(C_{n+1}(x)+C_{0}(x)\big)G_{1,3}(x;n)\Big)P^{(1)}_{n}(x) \nonumber \\
&+\Big( -D_{0}(x)G_{0,3}(x;n) + \Phi(x)H_{3}'(x;n) - \tfrac12\big(C_{n+1}(x)+C_{0}(x)\big)H_{3}(x;n)\Big)P_{n}(x) \nonumber\\
&= \Phi(x)F_{3}'(x;n) + D_{0}(x)G_{1,3}(x;n)P_{n+1}(x) - D_{n}(x)H_{3}(x;n)P_{n+1}(x), \quad n\geq 0.\label{E3-}
\end{align}
Moreover, it is easy to see that
\[
\begin{aligned}
F_{3}'(x;n) =& \Phi^{3}(x)P^{(4)}_{n+1}(x) + \big[3\Phi^{2}(x)\Phi'(x) + M_{2,3}(x;n)\big]P^{(3)}_{n+1}(x) \\
&+ \big[M_{2,3}'(x;n) + M_{1,3}(x;n)\big]P''_{n+1}(x) \\
&+ \big[M_{1,3}'(x;n) + M_{0,3}(x;n)\big]P'_{n+1}(x) + M_{0,3}'(x;n)P_{n+1}(x), \quad n\geq 0.
\end{aligned}
\]
Thus, substituting this into \eqref{E3-} yields the relation \eqref{(4.2.5.1)S4} with coefficients \(G_{0,4}(x;n)\), \(G_{1,4}(x;n)\), \(H_{4}(x;n)\), and \(F_{4}(x;n)\) given by \eqref{G04}, \eqref{G14}, \eqref{H4}, and \eqref{F4}, respectively. This concludes the proof of the theorem.
\end{proof}

We now present the main result of this section, which establishes the fourth-order linear differential equation satisfied by each polynomial of a Laguerre--Hahn MOPS, expressed in terms of the coefficients of the structure relations.

\begin{theorem}\label{proposition-Main}
A Laguerre–Hahn MOPS $\{P_n\}_{n \geq 0}$ satisfies the following fourth-order linear differential equation 
\begin{equation}\label{4ODE_LH}
\mathcal{A}(x;n)P^{(4)}_{n+1}(x)
+\mathcal{B}(x;n)P^{(3)}_{n+1}(x)
+\mathcal{C}(x;n)P''_{n+1}(x)
+\mathcal{D}(x;n)P'_{n+1}(x)
+\mathcal{E}(x;n)P_{n+1}(x)=0,    
\end{equation}
with
\begin{align}
\mathcal{A}(x;n) =& \Phi^{4}(x)\Delta_4(x;n),\quad n\geq0, \label{A}
\\[6pt]
\mathcal{B}(x;n) =& M_{3,4}(x;n)\Delta_4(x;n) - \Phi^{3}(x) \Delta_3(x;n),\quad n\geq0, \label{B}
\\[6pt]
\mathcal{C}(x;n) =& M_{2,4}(x;n) \Delta_4(x;n) - M_{2,3}(x;n)\Delta_3(x;n) + \Phi^{2}(x)\Delta_2(x;n),\quad n\geq0, \label{C}
\\[6pt]
\mathcal{D}(x;n) =& M_{1,4}(x;n)\Delta_4(x;n) - M_{1,3}(x;n)\Delta_3(x;n) \nonumber\\
&+ M_{1,2}(x;n)\Delta_2(x;n) - \Phi(x)\Delta_1(x;n),\quad n\geq0,
\label{D}
\\[6pt]
\mathcal{E}(x;n) =& M_{0,4}(x;n)\Delta_4(x;n) - M_{0,3}(x;n)\Delta_3(x;n) \nonumber\\
& + M_{0,2}(x;n)\Delta_2(x;n) - M_{0,1}(x;n)\Delta_1(x;n),\quad n\geq0, \label{E}
\end{align}
where
\begin{align}
\Delta_4(x;n) =&
G_{0,1}(x;n)\big(G_{1,2}(x;n)H_3(x;n)-G_{1,3}(x;n)H_2(x;n)\big) \nonumber\\
& - G_{1,1}(x;n)\big(G_{0,2}(x;n)H_3(x;n) -G_{0,3}(x;n)H_2(x;n)\big) \nonumber\\
& + H_1(x;n)\big(G_{0,2}(x;n)G_{1,3}(x;n)-G_{0,3}(x;n)G_{1,2}(x;n)\big), \quad n\geq0, \label{Delta1}
\\[6pt]
\Delta_3(x;n) =&
G_{0,1}(x;n)\big(G_{1,2}(x;n)H_4(x;n)-G_{1,4}(x;n)H_2(x;n)\big) \nonumber\\
& - G_{1,1}(x;n)\big(G_{0,2}(x;n)H_4(x;n) -G_{0,4}(x;n)H_2(x;n)\big) \nonumber\\
& + H_1(x;n)\big(G_{0,2}(x;n)G_{1,4}(x;n)-G_{0,4}(x;n)G_{1,2}(x;n)\big), \quad n\geq0, \label{Delta2}
\\[6pt]
\Delta_2(x;n) =&
G_{0,1}(x;n)\big(G_{1,3}(x;n)H_4(x;n)-G_{1,4}(x;n)H_3(x;n)\big) \nonumber\\
&- G_{1,1}(x;n)\big(G_{0,3}(x;n)H_4(x;n)-G_{0,4}(x;n)H_3(x;n)\big) \nonumber\\
& + H_1(x;n)\big(G_{0,3}(x;n)G_{1,4}(x;n)-G_{0,4}(x;n)G_{1,3}(x;n)\big), \quad n\geq0, \label{Delta3}
\\[6pt]
\Delta_1(x;n) =&
G_{0,2}(x;n)\big(G_{1,3}(x;n)H_4(x;n)-G_{1,4}(x;n)H_3(x;n)\big) \nonumber\\
& - G_{1,2}(x;n)\big(G_{0,3}(x;n)H_4(x;n) -G_{0,4}(x;n)H_3(x;n)\big) \nonumber\\
& + H_2(x;n)\big(G_{0,3}(x;n)G_{1,4}(x;n)-G_{0,4}(x;n)G_{1,3}(x;n)\big), \quad n\geq0, \label{Delta4}
\end{align}
where the polynomials \(G_{0,k}(x;n)\), \(G_{1,k}(x;n)\), \(H_{k}(x;n)\), and \(M_{j,k}(x;n)\),
\(1 \leq k \leq 4\) and \(0 \leq j \leq 3\), are given by
\eqref{G01}--\eqref{M04}.
\end{theorem}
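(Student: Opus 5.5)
We eliminate the three unknowns from the four relations of Theorem~\ref{lemma-4RS}. For fixed $n$, regard \eqref{(4.2.5.1)S1}--\eqref{(4.2.5.1)S4} as a linear system of four equations in the three ``unknowns'' $P^{(1)}_{n-1}(x)$, $P^{(1)}_{n}(x)$, $P_n(x)$. The coefficients are the polynomials $G_{0,k}$, $G_{1,k}$, $H_k$, and the right-hand sides are $F_1,\dots,F_4$. The augmented $4\times 4$ matrix
\[
\mathcal{M}(x;n)=\begin{pmatrix}
G_{0,1} & G_{1,1} & H_1 & F_1\\
G_{0,2} & G_{1,2} & H_2 & F_2\\
G_{0,3} & G_{1,3} & H_3 & F_3\\
G_{0,4} & G_{1,4} & H_4 & F_4
\end{pmatrix}
\]
then has its last column equal to the combination of the first three with weights $P^{(1)}_{n-1}$, $P^{(1)}_n$, $P_n$. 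This holds pointwise in $x$ in the ring of polynomials (or the field of rational functions). Hence $\det\mathcal{M}(x;n)=0$ identically. This is the only structural step, and it needs no invertibility assumption.

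Next we expand $\det\mathcal{M}$ along its last column. Each cofactor is $\pm$ a $3\times3$ minor built from the $G_{0,k},G_{1,k},H_k$. Comparing with \eqref{Delta1}--\eqref{Delta4}, the minor deleting row $4$ is $\Delta_4$, deleting row $3$ is $\Delta_3$, deleting row $2$ is $\Delta_2$, and deleting row $1$ is $\Delta_1$. Each is written as a cofactor expansion along its first column with the standard alternating signs. The Laplace expansion therefore gives
\[
F_4\Delta_4-F_3\Delta_3+F_2\Delta_2-F_1\Delta_1=0 .
\]
The sign pattern $(-1)^{k+4}$ for row $k$ matches the signs in \eqref{A}--\eqref{E}, up to a global sign.

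Finally, we substitute the expressions \eqref{F1}, \eqref{F2}, \eqref{F3}, \eqref{F4} of the $F_k$ in terms of $P_{n+1},P'_{n+1},\dots,P^{(4)}_{n+1}$ with coefficients $\Phi^k$ and $M_{j,k}$, and collect by derivative order.
\begin{itemize}
\item $P^{(4)}_{n+1}$ appears only in $F_4$, with coefficient $\Phi^4$. This gives $\mathcal{A}$.
\item $P^{(3)}_{n+1}$ appears in $F_4$ (coefficient $M_{3,4}$) and in $F_3$ (coefficient $\Phi^3$). This gives $\mathcal{B}$.
\item $P''_{n+1}$ gives $M_{2,4}\Delta_4-M_{2,3}\Delta_3+\Phi^2\Delta_2$, which is $\mathcal{C}$.
\item $P'_{n+1}$ gives $M_{1,4}\Delta_4-M_{1,3}\Delta_3+M_{1,2}\Delta_2-\Phi\Delta_1$, which is $\mathcal{D}$.
\item $P_{n+1}$ gives $\mathcal{E}$, using $F_1=\Phi P'_{n+1}+M_{0,1}P_{n+1}$.
\end{itemize}
This yields \eqref{4ODE_LH}.

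The only delicate point is bookkeeping: checking that the minors and signs in \eqref{Delta1}--\eqref{Delta4} are exactly the cofactors of the last column, with consistent row ordering. Once that is verified, the rest is direct collection. We will also remark that the statement asserts the identity, not the nontriviality of $\mathcal{A}$. If $\Delta_4\equiv0$ (for instance in the semiclassical case $B_0=0$, where $G_{0,1}=G_{1,1}=G_{0,2}=0$), the identity still holds but degenerates. That reduction is handled separately in the corollaries and need not be addressed here.
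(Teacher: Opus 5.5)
Your proposal is correct and is essentially the paper's proof: the $4\times4$ determinant whose last column $(F_k)$ is a combination of the first three vanishes, Laplace expansion along that column identifies the minors with $\Delta_1,\dots,\Delta_4$, and substituting the expressions of $F_1,\dots,F_4$ and collecting by derivative order gives $\mathcal{A},\dots,\mathcal{E}$. Two harmless slips: the formulas for $\Delta_k$ expand each $3\times3$ minor along its first row (the entries $G_{0,\cdot},G_{1,\cdot},H_{\cdot}$ of its top equation), not its first column; and with your ordering $F_4\Delta_4-F_3\Delta_3+F_2\Delta_2-F_1\Delta_1$ the signs of $\mathcal{A},\dots,\mathcal{E}$ match exactly, not merely up to a global sign.
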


\begin{proof}
The fourth-order differential equation is expressed in determinantal form from the system
\eqref{(4.2.5.1)S1}--\eqref{(4.2.5.1)S4}, that is,
\[ 
\begin{vmatrix}
G_{0,1}(x;n) & G_{1,1}(x;n) & H_1(x;n) & F_1(x;n)\\
G_{0,2}(x;n) & G_{1,2}(x;n) & H_2(x;n) & F_2(x;n)\\
G_{0,3}(x;n) & G_{1,3}(x;n) & H_3(x;n) & F_3(x;n)\\
G_{0,4}(x;n) & G_{1,4}(x;n) & H_4(x;n) & F_4(x;n)
\end{vmatrix}=0,\quad n\geq 0.
\]
Expanding the determinant along the fourth column yields
\[
\Delta_1(x;n)F_{1}(x;n)
- \Delta_2(x;n)F_{2}(x;n)
+ \Delta_3(x;n)F_{3}(x;n)
- \Delta_4(x;n)F_{4}(x;n)=0,\quad n\geq 0,
\]
where the polynomials \(\Delta_k(x;n)\), for \(1 \leq k \leq 4\), are given by
\eqref{Delta1}--\eqref{Delta4}. Thus, the fourth-order differential equation~\eqref{4ODE_LH} is obtained by substituting the expressions of
\(F_k(x;n)\), \(0 \leq k \leq 4\), given by~\eqref{F1}, \eqref{F2}, \eqref{F3}, and~\eqref{F4}.
\end{proof}

The above result provides a general framework for deriving a fourth-order differential equation for any Laguerre-Hahn polynomial sequence. In the following, we examine two important particular cases: the semiclassical and the classical families. In fact, in the semiclassical case, i.e., when $B_0(x) = 0$, all the coefficients $G_{i,j}$  in Theorem~\ref{lemma-4RS} vanish for $i = 0,1$, and $j = 1,2,3,4$. This leads to a second-order differential equation (the minimal order) -- see Corollary~\ref{corollary-eq-diff-semiclassical} -- as well as to higher-order equations of third and fourth order (see Corollary~\ref{3de4de_sc}).


\begin{corollary}\label{corollary-eq-diff-semiclassical}
A semiclassical MOPS $\{P_n\}_{n \geq 0}$ satisfies the following linear second-order differential equation
\[
\mathcal{C}_{2}(x;n)P_{n+1}''(x)
+\mathcal{D}_{2}(x;n)P_{n+1}'(x)
+\mathcal{E}_{2}(x;n)P_{n+1}(x)=0,\quad n\geq0,
\]
with
\begin{enumerate}
\item[(I)]
\begin{eqnarray}
\mathcal{C}_{2}(x;n) & = & \Phi^{2}(x)H_1(x;n),\quad n\geq0, \label{2DF_1_SC}\\
\mathcal{D}_{2}(x;n) & = & M_{1,2}(x;n)H_1(x;n)-\Phi(x) H_2(x;n),\quad n\geq0, \label{2DF_2_SC}\\
\mathcal{E}_{2}(x;n)  & = &  M_{0,2}(x;n)H_1(x;n)-M_{0,1}(x;n)H_2(x;n),\quad n\geq0.\label{2DF_3_SC}
\end{eqnarray}
\item [(II)]
\begin{align*}
\mathcal{C}_{2}(x;n) =&\Phi^{2}(x)\,D_{n+1}(x),\quad n\geq0,\\
\mathcal{D}_{2}(x;n) =&\Phi(x)\Bigl[\bigl(\Phi'(x)+C_{0}(x)\bigr)D_{n+1}(x) -\Phi(x)D_{n+1}'(x)\Bigr],\quad n\geq0,\\
\mathcal{E}_{2}(x;n)=&D_{n+1}(x)\Bigl[\gamma_{n+1}D_{n}(x)D_{n+1}(x) -\frac{1}{4}\bigl(C_{n+1}^{2}(x)-C_{0}^{2}(x)\bigr) \nonumber\\
&-\frac{1}{2}\Phi(x)\bigl(C_{n+1}'(x)-C_{0}'(x)\bigr)\Bigr]\\
&+\frac{1}{2}\bigl(C_{n+1}(x)-C_{0}(x)\bigr)\Phi(x)D_{n+1}'(x),\quad n\geq0.
\end{align*}
\end{enumerate}
\end{corollary}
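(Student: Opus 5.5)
In the semiclassical case $B_0=B=0$. I will specialize the first two relations of Theorem~\ref{lemma-4RS}, \eqref{(4.2.5.1)S1} and \eqref{(4.2.5.1)S2}, and eliminate $P_n$ between them. The general $3\times 3$ elimination of Theorem~\ref{proposition-Main} is not needed here.

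With $B_0=0$, \eqref{G01}, \eqref{G11}, \eqref{G02} and \eqref{G12} give $G_{0,1}=G_{1,1}=G_{0,2}=G_{1,2}=0$. The relations \eqref{(4.2.5.1)S1}--\eqref{(4.2.5.1)S2} then reduce to
\[
H_1(x;n)P_n(x)=F_1(x;n),\qquad H_2(x;n)P_n(x)=F_2(x;n).
\]
Multiplying the first by $H_2$ and the second by $H_1$ and subtracting gives $H_1F_2-H_2F_1=0$. This is the $2\times2$ analogue of the determinant used in the proof of Theorem~\ref{proposition-Main}.

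For version (I), substitute \eqref{F1} and \eqref{F2}:
\[
H_1\bigl(\Phi^2P''_{n+1}+M_{1,2}P'_{n+1}+M_{0,2}P_{n+1}\bigr)-H_2\bigl(\Phi P'_{n+1}+M_{0,1}P_{n+1}\bigr)=0.
\]
Collecting terms gives exactly \eqref{2DF_1_SC}--\eqref{2DF_3_SC}. The equation is nontrivial because $\gamma_{n+1}\neq0$ and $D_{n+1}\not\equiv0$. For the latter, if $D_{n+1}=0$ then \eqref{R4} would read $\Phi P'_{n+1}=M\,P_{n+1}$ with $\deg M\le s+1$. This contradicts the fact that a nonconstant polynomial $P_{n+1}$ does not divide $\Phi P'_{n+1}$ in the generic degree range. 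I will note this briefly, or simply assume $D_{n+1}\neq0$ as is implicit in the statement.

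For version (II), insert the explicit values into (I):
\begin{itemize}
\item $H_1=-\gamma_{n+1}D_{n+1}$;
\item $H_2=\gamma_{n+1}\bigl(\tfrac12(C_{n+1}+C_0)D_{n+1}-\Phi D'_{n+1}\bigr)$ from \eqref{H2};
\item $M_{0,1}=-\tfrac12(C_{n+1}-C_0)$;
\item $M_{1,2}=\Phi\bigl(\Phi'-\tfrac12(C_{n+1}-C_0)\bigr)$;
\item $M_{0,2}=-\tfrac12\Phi(C_{n+1}-C_0)'+\gamma_{n+1}D_nD_{n+1}$, since $B_0D_0=0$.
\end{itemize}
The common factor $-\gamma_{n+1}\neq0$ then divides out.

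For $\mathcal D_2$, the terms $-\tfrac12(C_{n+1}-C_0)D_{n+1}$ and $+\tfrac12(C_{n+1}+C_0)D_{n+1}$ combine to $C_0D_{n+1}$. This yields $\Phi[(\Phi'+C_0)D_{n+1}-\Phi D'_{n+1}]$.

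For $\mathcal E_2$, the product $-M_{0,1}H_2/(-\gamma_{n+1})$ contributes
\[
-\tfrac14(C_{n+1}-C_0)(C_{n+1}+C_0)D_{n+1}+\tfrac12(C_{n+1}-C_0)\Phi D'_{n+1}.
\]
Here I use $(C_{n+1}-C_0)(C_{n+1}+C_0)=C_{n+1}^2-C_0^2$. Added to $D_{n+1}\bigl(\gamma_{n+1}D_nD_{n+1}-\tfrac12\Phi(C'_{n+1}-C'_0)\bigr)$, this gives exactly the stated $\mathcal E_2$.

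The only step requiring care is this sign bookkeeping in $\mathcal E_2$ after dividing by $-\gamma_{n+1}$. It is routine, and no structural obstacle is expected.
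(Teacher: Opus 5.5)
Your proposal is correct and follows the paper's proof essentially verbatim: with $B_0=0$ the four $G$'s vanish, you eliminate $P_n$ via $H_1F_2-H_2F_1=0$ and substitute $F_1,F_2$ to get (I), and then you insert $H_1,H_2,M_{0,1},M_{1,2},M_{0,2}$ and divide by $-\gamma_{n+1}$ to get (II); your sign bookkeeping for $\mathcal{D}_2$ and $\mathcal{E}_2$ checks out. The nontriviality aside ($D_{n+1}\not\equiv 0$) is not needed, since the statement only asserts the identity, and your divisibility sketch does not prove it ($P_{n+1}$ does divide $\Phi P'_{n+1}$ when all its zeros are zeros of $\Phi$), so either drop it or give a genuine argument from regularity.
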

\begin{proof}
According to Proposition~\ref{proposition-FSR}, the semiclassical MOPS \(\{P_n\}_{n \geq 0}\) satisfies the main structure relation~\eqref{R4} with \(B_0(x)= 0\), from which it follows that 
$$G_{0,1}(x;n) = G_{1,1}(x;n) = G_{0,2}(x;n) = G_{1,2}(x;n) = 0.$$  
Consequently, \eqref{(4.2.5.1)S1} and \eqref{(4.2.5.1)S2} become
\begin{align*}
H_1(x;n)P_n(x)=F_1(x;n),\quad n\geq0,\\
H_2(x;n)P_n(x)=F_2(x;n),\quad n\geq0,
\end{align*}
which necessarily gives
\[
H_1(x;n)F_2(x;n)-H_2(x;n)F_1(x;n)=0,\quad n\geq0.
\]
Substituting the expressions of \(F_1(x;n)\) and \(F_2(x;n)\) given in~\eqref{F1} and~\eqref{F2}, respectively, yields
\begin{eqnarray}
&&\Phi^{2}(x)H_1(x;n) P_{n+1}''(x)
+\big[M_{1,2}(x;n)H_1(x;n)-\Phi(x) H_2(x;n)\big]\,P_{n+1}'(x) \notag\\
&&+\big[M_{0,2}(x;n)H_1(x;n)-M_{0,1}(x;n)H_2(x;n)\big]\,P_{n+1}(x)=0,\quad n\geq0, \notag
\end{eqnarray}
which establishes the second-order differential equation of type (I).
Furthermore, by substituting the explicit expressions of $H_{1}(x;n)$, $H_{2}(x;n)$, $M_{0,1}(x;n)$, $M_{0,2}(x;n)$, and $M_{1,2}(x;n)$ as given in~\eqref{H1}, \eqref{H2}, \eqref{M01}, \eqref{M02}, and~\eqref{M12}, respectively, and simplifying by the nonzero factor $\gamma_{n+1}$, we obtain the second-order differential equation of type (II). This completes the proof.
\end{proof}

\begin{remark}
The two formulations (I) and (II) are equivalent. Form (I) is specifically intended for the needs of the algorithmic implementation (see Section~\ref{Section4}), as it directly follows from the procedure described in Theorems \ref{lemma-4RS} and \ref{proposition-Main}. Form (II) provides an explicit and compact expression, which is more convenient for direct analytical use.    
\end{remark}

The following lemma provides a useful identity that simplifies the expressions arising in the semiclassical differential equation.
\begin{lemma}
The following identity holds,
\begin{equation} \label{R-simp}
\gamma_{n+1} D_n D_{n+1} - \frac14 (C_{n+1}^2 - C_0^2)
= -\Phi \sum_{\nu=0}^n D_\nu, \quad n\geq0. 
\end{equation}
\end{lemma}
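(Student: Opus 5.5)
Note that \eqref{R-simp} is an identity purely between the sequences $C_n$, $D_n$ defined by the recurrences \eqref{SR-1}--\eqref{SR-2}. I would prove it by induction on $n$, using only these recurrences. Set
\[
Q_n:=\gamma_{n+1} D_n D_{n+1} - \tfrac14 \bigl(C_{n+1}^2 - C_0^2\bigr).
\]
The goal is to show that $Q_n-Q_{n-1}=-\Phi D_n$ for $n\geq 1$, and to check the base case $Q_0=-\Phi D_0$ directly.

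\textbf{Base case.} Take $n=0$ in \eqref{SR-1}: $C_1=-C_0+2(x-\beta_0)D_0$, hence $C_1^2-C_0^2=-4(x-\beta_0)C_0D_0+4(x-\beta_0)^2D_0^2$. Take $n=0$ in \eqref{SR-2}, with the convention $\gamma_0D_{-1}=0$ (or $\gamma_0=0$, as is natural for the recurrence at $n=0$): $\gamma_1 D_1=-\Phi-(x-\beta_0)C_0+(x-\beta_0)^2D_0$. Multiplying by $D_0$ and subtracting $\tfrac14(C_1^2-C_0^2)$, the $C_0D_0$ and $D_0^2$ terms cancel and leave $Q_0=-\Phi D_0$. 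I would state this convention explicitly, since the initial condition $D_{-1}=B$ appears in \eqref{SR-2} only multiplied by $\gamma_0$.

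\textbf{Induction step.} For $n\ge1$, multiply \eqref{SR-2} by $D_n$:
\[
\gamma_{n+1}D_nD_{n+1}=-\Phi D_n+\gamma_nD_{n-1}D_n-(x-\beta_n)C_nD_n+(x-\beta_n)^2D_n^2 .
\]
From \eqref{SR-1}, $C_{n+1}+C_n=2(x-\beta_n)D_n$ and $C_{n+1}-C_n=2(x-\beta_n)D_n-2C_n$. Therefore
\[
\tfrac14(C_{n+1}^2-C_n^2)=(x-\beta_n)^2D_n^2-(x-\beta_n)C_nD_n .
\]
Subtracting the two displays gives $Q_n=-\Phi D_n+\gamma_nD_{n-1}D_n-\tfrac14(C_n^2-C_0^2)=-\Phi D_n+Q_{n-1}$. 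Telescoping then yields $Q_n=-\Phi\sum_{\nu=0}^nD_\nu$.

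The only delicate point is the base case $n=0$, namely the meaning of the term $\gamma_0D_{-1}$ in \eqref{SR-2}. If the paper's convention instead produces a term like $\gamma_0B$, I would check it against \eqref{R4} at $n=0$ (where $P_{-1}^{(1)}=0$, $P_0^{(1)}=1$) to confirm it vanishes. The remaining steps are routine algebra.
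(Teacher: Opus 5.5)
Your inductive step is exactly the paper's. Multiplying \eqref{SR-2} by $D_n$ and using $C_{n+1}+C_n=2(x-\beta_n)D_n$ gives $Q_n=Q_{n-1}-\Phi D_n$ for $n\ge1$, and this holds for every Laguerre--Hahn sequence.

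The gap is in your base case: you may not fix the term $\gamma_0D_{-1}$ by convention. Relation \eqref{SR-1} at $n=0$ determines $C_1$ without using $D_{-1}$. The structure relation \eqref{R4} at $n=0$ (with $P_1=x-\beta_0$, $P_0=P_0^{(1)}=1$) then determines $\gamma_1D_1$, namely
\[
\gamma_1D_1=-\Phi+B-(x-\beta_0)C_0+(x-\beta_0)^2D_0 .
\]
So effectively $\gamma_0D_{-1}=B$. This agrees with the initial condition $D_{-1}=B$ and with the data of both cases in Section~\ref{Section5}. The check you propose therefore does not make the extra term vanish. It gives $Q_0=(B-\Phi)D_0$ and, after telescoping, $Q_n=BD_0-\Phi\sum_{\nu=0}^nD_\nu$.

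Consequently the identity as written needs $BD_0=0$, and it fails for strict Laguerre--Hahn forms in general. Take Case~1 of Section~\ref{Section5} with $\rho=1$, $\lambda=0$, $\tau\neq0$. Then $B=-\tau$, $C_0=C_1=-2x$, $D_0=D_1=-2$ and $\gamma_1=(\tau+1)/2$, hence $Q_0=2(\tau+1)\neq 2=-\Phi D_0$.

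What legitimately closes the base case is the hypothesis under which the paper states and uses the lemma: the semiclassical case $B=0$. Then $D_{-1}=B=0$, the term $\gamma_0D_{-1}$ disappears whatever $\gamma_0$ is, and your computation goes through. It is then exactly the paper's base case, which drops this term without comment and is justified precisely by $B=0$.

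So replace the ``convention $\gamma_0=0$'' by the assumption $B=0$. Alternatively, keep the term $BD_0$ if you want a statement valid for all Laguerre--Hahn sequences.
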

\begin{proof}
We proceed by induction on $n$. For $n=0$, \eqref{SR-2} gives $\gamma_1 D_1 = -\Phi + (x-\beta_0)^2 D_0 - (x-\beta_0)C_0$. Multiplying by $D_0$ and using the expression $$
\frac14(C_1^2-C_0^2) = (x-\beta_0)D_0[(x-\beta_0)D_0 - C_0]
$$
from \eqref{SR-1}, the quadratic terms cancel, yielding
\[
\gamma_1 D_0 D_1 - \frac14(C_1^2-C_0^2) = -\Phi D_0,
\]
which is \eqref{R-simp} for $n=0$.\\
Assume \eqref{R-simp} holds for $n-1$:
\begin{equation}\label{H-R}
\gamma_n D_{n-1}D_n - \frac14(C_n^2-C_0^2) = -\Phi\sum_{\nu=0}^{n-1} D_\nu. 
\end{equation}
Multiply \eqref{SR-2} by $D_n$:
\[
\gamma_{n+1} D_n D_{n+1} = -\Phi D_n + \gamma_n D_{n-1}D_n + (x-\beta_n)^2 D_n^2 - (x-\beta_n)C_n D_n.
\]
From \eqref{SR-1}, we have $2(x-\beta_n)D_n = C_{n+1}+C_n$, so $$
(x-\beta_n)^2 D_n^2 = \frac14(C_{n+1}+C_n)^2 \quad \text{and}\quad (x-\beta_n)C_n D_n = \frac12 C_n(C_{n+1}+C_n).
$$
Substituting and simplifying gives
\[
\gamma_{n+1} D_n D_{n+1} = -\Phi D_n + \gamma_n D_{n-1}D_n + \frac14(C_{n+1}^2 - C_n^2).
\]
Subtract $\frac14(C_{n+1}^2-C_0^2)$ from both sides
\[
\gamma_{n+1} D_n D_{n+1} - \frac14(C_{n+1}^2-C_0^2) = -\Phi D_n + \gamma_n D_{n-1}D_n - \frac14(C_n^2-C_0^2).
\]
By \eqref{H-R}, the right-hand side equals $-\Phi D_n - \Phi\displaystyle\sum_{\nu=0}^{n-1} D_\nu = -\Phi\displaystyle\sum_{\nu=0}^{n} D_\nu$, which is \eqref{R-simp} for $n$.
\end{proof}

\begin{remark}
Thanks to the above identity, the second-order differential equation satisfied by semiclassical orthogonal polynomials takes a simpler and well-known form. Indeed,
substituting \eqref{R-simp} into the expression of $\mathcal{E}_2(x;n)$ given in Corollary~\ref{corollary-eq-diff-semiclassical} and dividing the differential equation by $\Phi(x)$, we recover the following second-order differential equation, which is
well known in the literature for characterizing semiclassical orthogonal polynomials \cite{Maroni-1991}
\begin{equation}\label{Eq-diff-Maroni-1991}
{J}(x;n){P}_{n+1}''(x)+{K}(x;n){P}_{n+1}'(x)+{L}(x;n){P}_{n+1}(x)=0,
\quad n\geq0,
\end{equation}
with
\begin{align*}
{J}(x;n)&={\Phi}(x){D}_{n+1}(x),\quad n\geq0,\\
{K}(x;n)&={C}_0(x){D}_{n+1}(x)-\mathcal{W}\big({\Phi},{D}_{n+1}\big)(x),\quad n\geq0,\\
{L}(x;n)&=\mathcal{W}\Big(\frac{1}{2}\big({C}_{n+1}-{C}_{0}\big),{D}_{n+1}\Big)(x)
-{D}_{n+1}(x)\sum_{\nu=0}^{n}{D}_{\nu}(x),\quad n\geq0,
\end{align*}
where $\mathcal{W}(f,g)=f g'-f' g$ denotes the Wronskian.
\end{remark}

As a direct consequence of the previous reduction, we recover the Bochner characterization of classical orthogonal polynomial families satisfying a second-order linear differential equation with polynomial coefficients  \cite{Bochner-1929}.

\begin{corollary}\label{corollary-eq-diff-classical}
A classical MOPS $\{P_n\}_{n \geq 0}$ satisfies the following linear second-order differential equation  
\[
\mathcal{C}(x;n)P_{n+1}''(x)
+\mathcal{D}(x;n)P_{n+1}'(x)
+\mathcal{E}(x;n)P_{n+1}(x)=0,\quad n\geq0,
\]
with
\[
\begin{aligned}
\mathcal{C}(x;n) &= \Phi(x),\\
\mathcal{D}(x;n) &= \Phi'(x)+C_{0}(x)=-\psi(x),\\
\mathcal{E}(x;n) &= -\left[\sum_{\nu=0}^n D_\nu
+\frac12\bigl(C_{n+1}'(x)-C_0'(x)\bigr)\right],\quad n\ge0.
\end{aligned}
\]
\end{corollary}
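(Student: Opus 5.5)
Starting from the simplified semiclassical equation \eqref{Eq-diff-Maroni-1991} in the preceding remark, we specialise to the classical case: $B=0$, $\deg\Phi\le 2$, $\deg\psi=1$, so the class is $s=0$. By Proposition~\ref{proposition-FSR}, $\deg D_n\le s=0$, so every $D_n$ is a constant $D_n(x)=d_n$. The goal is then to divide the equation $J P''_{n+1}+KP'_{n+1}+LP_{n+1}=0$ by the constant $D_{n+1}$. This is the one point needing care: we must check that $d_{n+1}\neq0$.

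\textbf{Step 1: $d_{n+1}\neq 0$.} With $B_0=0$, relation \eqref{R4} reads
\[
\Phi P'_{n+1}-\tfrac12(C_{n+1}-C_0)P_{n+1}=-\gamma_{n+1}D_{n+1}P_n .
\]
If $D_{n+1}=0$, then $P_{n+1}$ would divide $\Phi P'_{n+1}$. Since $\deg\Phi\le2$ and the zeros of $P_{n+1}$ are simple (the form is regular), this is impossible for $n+1\ge 3$. Rather than treat small indices by hand, I would argue by leading coefficients, comparing the coefficient of $x^{n}$ on both sides of \eqref{R4}. From \eqref{SR-1}, \eqref{SR-2} with constant $D_n$, and writing $\Phi(x)=\phi_2x^2+\dots$, $C_0=-\Phi'-\psi$ with leading coefficient $c$, one gets
\[
\gamma_{n+1}d_{n+1}=-\left(n\phi_2+\psi'\right)\cdot(\text{nonzero factor}).
\]
This does not vanish precisely because of the regularity condition $n\phi_2+\psi'\neq0$ for classical forms. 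I expect this step to be the main obstacle; if the bookkeeping is awkward, it can be sidestepped by recalling that the leading coefficient of $\Phi P''_{n+1}$ in the final equation must be nonzero.

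\textbf{Step 2: compute the coefficients.} Since $D_{n+1}$ is constant, $\mathcal{W}(\Phi,D_{n+1})=-\Phi'D_{n+1}$. Hence
\[
K=(C_0+\Phi')D_{n+1}=-\psi\,D_{n+1}.
\]
Similarly, $\mathcal{W}\bigl(\tfrac12(C_{n+1}-C_0),D_{n+1}\bigr)=-\tfrac12(C'_{n+1}-C'_0)D_{n+1}$. Therefore
\[
L=-D_{n+1}\Bigl[\sum_{\nu=0}^n D_\nu+\tfrac12(C'_{n+1}-C'_0)\Bigr].
\]

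\textbf{Step 3: conclude.} Dividing the equation $JP''_{n+1}+KP'_{n+1}+LP_{n+1}=0$ by the nonzero constant $D_{n+1}$ gives exactly $\mathcal{C}=\Phi$, $\mathcal{D}=\Phi'+C_0=-\psi$, and the stated $\mathcal{E}$. Here $\mathcal{D}=\Phi'+C_0=-\psi$ follows from $C=-\Phi'-\psi$ in (ii) of the characterisation proposition. One can also note that $\mathcal{E}$ is a constant, since $\deg C_n\le 1$. This recovers the Bochner form \eqref{Eq-Diff-classical} with $\lambda=\mathcal{E}$.
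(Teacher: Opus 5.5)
Your route is the same as the paper's: specialise \eqref{Eq-diff-Maroni-1991}, use $s=0$ to get $D_{n+1}=d_{n+1}$ constant, and divide by it. Your Steps 2 and 3 (the two Wronskians, $\Phi'+C_0=-\psi$, and the division) are correct. They spell out what the paper's two-line proof leaves to the reader.

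The paper simply asserts $d_{n+1}\neq0$. You are right to flag this point, but your Step 1 justification is wrong in its details.
\begin{itemize}
\item Regularity alone does not make the zeros of $P_{n+1}$ simple, since quasi-definite forms can have orthogonal polynomials with multiple zeros. So the divisibility argument is unfounded.
\item The fallback is circular. $J$, $K$, $L$ are all multiples of $D_{n+1}$, so if $d_{n+1}=0$ the equation degenerates to $0=0$ and no leading coefficient is forced to be nonzero.
\item Your displayed formula is incorrect. With the paper's convention $(\Phi u)'+\psi u=0$, the condition $n\phi_2+\psi'\neq0$ is not a regularity condition: it fails for the Legendre form, $\Phi=x^2-1$, $\psi=-2x$, at $n=2$.
\end{itemize}

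The bookkeeping you expected to be awkward is actually short. Write $c_n$ for the coefficient of $x$ in $C_n$. The $x^2$-coefficient of \eqref{SR-2}, whose left-hand side is constant, gives $d_n=c_n+\phi_2$. The $x$-coefficient of \eqref{SR-1} gives $c_{n+1}=-c_n+2d_n$. Together these give $d_{n+1}=d_n+2\phi_2$.

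Since $d_0=D=-(u\theta_0\Phi)'-(u\theta_0\psi)=-\phi_2-\psi'$, you get $d_{n+1}=\tfrac{2n+1}{2}\Phi''-\psi'$. This is nonzero by the classical regularity condition $\tfrac{m}{2}\Phi''-\psi'\neq0$, $m\geq0$, taken at $m=2n+1$. For Legendre, for instance, $d_{n+1}=2n+3$.

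Note that $d_0$ itself may vanish, e.g.\ for Chebyshev polynomials of the first kind. Only $d_{n+1}$ with $n\geq0$ is needed, so this does not matter.
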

\begin{proof}
In this case $D_{n+1}(x)=d_{n+1}$ is a nonzero constant with respect to $x$. Therefore, dividing equation \eqref{Eq-diff-Maroni-1991} by $d_{n+1}$ yields the desired relation.
\end{proof}

It is well known that the minimal order of a differential equation satisfied by semiclassical orthogonal polynomials is two \cite{Hahn-1978}, as already illustrated by the second-order equations obtained above. Nevertheless, as a consequence of our general formalism, we also obtain higher-order differential equations of third and fourth order for the same families, as stated in the following result.

\begin{corollary}\label{3de4de_sc}
A semiclassical MOPS $\{P_n\}_{n \geq 0}$ satisfies the following
third-order and fourth-order differential equations    
\[
\mathcal{B}_3(x;n)P_{n+1}'''(x) + \mathcal{C}_3(x;n)P_{n+1}''(x) + \mathcal{D}_3(x;n)P_{n+1}'(x) + \mathcal{E}_3(x;n)P_{n+1}(x) = 0, \quad n\geq0, 
\]
with
\begin{align*}
\mathcal{B}_3(x;n) &= \Phi^3(x)H_1(x;n),\\[4pt]
\mathcal{C}_3(x;n) &= M_{2,3}(x;n)H_1(x;n),\\[4pt]
\mathcal{D}_3(x;n) &= M_{1,3}(x;n)H_1(x;n) - \Phi(x)H_3(x;n),\\[4pt]
\mathcal{E}_3(x;n) &= M_{0,3}(x;n)H_1(x;n) - M_{0,1}(x;n)H_3(x;n);
\end{align*}
\[
\mathcal{A}_4(x;n)P_{n+1}^{(4)}(x) + \mathcal{B}_4(x;n)P_{n+1}'''(x) + \mathcal{C}_4(x;n)P_{n+1}''(x) + \mathcal{D}_4(x;n)P_{n+1}'(x) + \mathcal{E}_4(x;n)P_{n+1}(x) = 0, 
\]
with
\begin{align*}
\mathcal{A}_4(x;n) &= \Phi^4(x)H_1(x;n),\\[4pt]
\mathcal{B}_4(x;n) &= M_{3,4}(x;n)H_1(x;n),\\[4pt]
\mathcal{C}_4(x;n) &= M_{2,4}(x;n)H_1(x;n),\\[4pt]
\mathcal{D}_4(x;n) &= M_{1,4}(x;n)H_1(x;n) - \Phi(x)H_4(x;n),\\[4pt]
\mathcal{E}_4(x;n) &= M_{0,4}(x;n)H_1(x;n) - M_{0,1}(x;n)H_4(x;n),
\end{align*}
where $M_{i,j}$, $0\le i\le 3$, $1\le j\le 4$, and $H_k$, $1\le k\le 4$, are given in Lemma~\ref{lemma-4RS} under the condition $B_0(x)=0$.
\end{corollary}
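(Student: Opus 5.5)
The plan is to specialise Theorem~\ref{lemma-4RS} to the case $B_0(x)=0$, as in the proof of Corollary~\ref{corollary-eq-diff-semiclassical}, and then eliminate $P_n$ between the first relation and the third (respectively fourth).

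\emph{Step 1: all $G$-coefficients vanish.} With $B_0=0$, formulas \eqref{G01}--\eqref{G13} give
\[
G_{0,k}(x;n)=G_{1,k}(x;n)=0,\qquad k=1,2,3.
\]
For \eqref{G03}, each term carries a factor $B_0$ or $B_0'$; for \eqref{G13}, each term carries $B_0$, $B_0'$ or $B_0''$. Next, \eqref{G04} and \eqref{G14} are linear combinations of $G_{0,3}$, $G_{1,3}$, $G'_{0,3}$, $G'_{1,3}$ and $B_0H_3$. Hence they also vanish, so $G_{0,4}=G_{1,4}=0$.

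\emph{Step 2: reduce to scalar relations.} Relations \eqref{(4.2.5.1)S1}, \eqref{(4.2.5.1)S3} and \eqref{(4.2.5.1)S4} therefore become
\[
H_k(x;n)P_n(x)=F_k(x;n),\qquad k=1,3,4.
\]
From $H_1P_n=F_1$ and $H_3P_n=F_3$ we get $H_1F_3-H_3F_1=0$ after multiplying crosswise. In the same way, $H_1F_4-H_4F_1=0$.

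\emph{Step 3: substitute the $F_k$.} We insert \eqref{F1}, \eqref{F3} and \eqref{F4} and collect the coefficients of $P_{n+1}^{(j)}$. In $H_1F_3-H_3F_1$, the factor $F_1=\Phi P'_{n+1}+M_{0,1}P_{n+1}$ contributes only to orders $1$ and $0$. This gives:
\begin{itemize}
\item order $3$: $\Phi^3H_1$;
\item order $2$: $M_{2,3}H_1$;
\item order $1$: $M_{1,3}H_1-\Phi H_3$;
\item order $0$: $M_{0,3}H_1-M_{0,1}H_3$.
\end{itemize}
These are exactly the stated $\mathcal{B}_3,\dots,\mathcal{E}_3$. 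The fourth-order case is identical with $F_4$ and $H_4$, which yields $\mathcal{A}_4,\dots,\mathcal{E}_4$.

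The only point needing care is Step 1. We must check that $G_{0,4}$ and $G_{1,4}$ contain no term that survives when $B_0=0$. This follows because they are built only from $G_{\cdot,3}$, their derivatives, and $B_0H_3$.

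One may also remark that the resulting equations are nontrivial, since $H_1=-\gamma_{n+1}D_{n+1}\neq0$. Indeed, $D_{n+1}\not\equiv0$: by \eqref{R4} with $B_0=0$, $D_{n+1}\equiv0$ would force $\Phi P'_{n+1}$ to be a multiple of $P_{n+1}$, which is impossible for an orthogonal sequence.
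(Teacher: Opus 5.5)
Your proof is correct and follows the paper's own argument: with $B_0=0$ every $G_{i,j}$ vanishes ($G_{0,4},G_{1,4}$ because they are built from $G_{\cdot,3}$, $G'_{\cdot,3}$ and $B_0H_3$), and eliminating $P_n$ via $H_1F_3-H_3F_1=0$ and $H_1F_4-H_4F_1=0$, then substituting \eqref{F1}, \eqref{F3}, \eqref{F4}, gives exactly the stated coefficients. Your closing nontriviality remark is not needed for the corollary, and its justification is wrong as stated: $\Phi P'_{n+1}$ can be a multiple of $P_{n+1}$ for an orthogonal sequence (for $n=0$ whenever $\Phi(\beta_0)=0$, as in the class-1 example of Section~\ref{Section7}, where $\beta_0=1$ and $\Phi=x(x^2-1)$), so $D_{n+1}\not\equiv 0$ would need a different argument.
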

\begin{proof}
According to Proposition~\ref{proposition-FSR}, the semiclassical MOPS
\(\{P_n\}_{n \geq 0}\) satisfies the first structure relation~\eqref{R4}
with \(B_0(x)=0\). Consequently,
\(G_{i,j}(x;n)=0\), for \(i=0,1\) and \(1\leq j\leq 4\). By combining equations \eqref{(4.2.5.1)S1} and \eqref{(4.2.5.1)S3}, as well as \eqref{(4.2.5.1)S1} and \eqref{(4.2.5.1)S4}, we obtain respectively \(H_1F_3 - H_3F_1 = 0\) and \(H_1F_4 - H_4F_1 = 0\), which,  after substituting the expressions of \(F_1\), \(F_3\), and \(F_4\), yield the desired relations. 
\end{proof}


\section{Algorithm for the symbolic computation of structure relations and differential equations}\label{Section4}

The theoretical developments presented thus far provide a general and constructive method for deriving structure relations and differential equations for Laguerre–Hahn, semiclassical, and classical orthogonal polynomials. 
This method is systematized in the following algorithm, named {\it 4oDELH.nb}, which was implemented in {\it Mathematica$^{\circledR}$}.

We remark that the coefficients of each structure relation do not have common factors, since each structure relation is normalized by the existence of a coefficient equal to a power of the monic polynomial $\Phi(x)$. In contrast, the coefficients of the differential equations may admit common factors. In such cases, the implementation factors them out and returns the corresponding reduced coefficients.


\vspace{0.5cm}

\noindent {\bf Algorithm {\it 4oDELH} } ({\it fourth-order differential equation for Laguerre-Hahn})

\vspace{0.25cm}

\begin{enumerate} 

\item {\bf Input Data}

\vspace{0.25cm}

- Coefficients of the recurrence relation \eqref{ic_TTRR}-\eqref{TTRR}: $\beta_{n}$, $\gamma_{n+1}$, $n\geq 0$.

- Coefficients of the Stieltjes equation \eqref{Riccati}: 
$\Phi(z)$, $B(z)$, $C(z)$, and $D(z)$.

- Coefficients of the structure relation \eqref{R4}: $B_0(x)=B(x)$,  $C_0(x)=C(x)$, $D_0(x)=D(x)$, $C_{n+1}(x)$, $D_{n+1}(x)$, $n\geq 0$.

\vspace{0.25cm}

\item {\bf Computation of structure relations for Laguerre-Hahn, semiclassical, and classical polynomials}

\vspace{0.25cm}

- Coefficients of the four structure relations \eqref{(4.2.5.1)S1}-\eqref{(4.2.5.1)S4} of Theorem \ref{lemma-4RS} are computed from the input data using the relations \eqref{G01}-\eqref{M04}.

\vspace{0.25cm}

\item {\bf Computation of the fourth-order differential equation for strict Laguerre-Hahn polynomials}

\vspace{0.25cm}

- Coefficients of the fourth-order differential equation \eqref{4ODE_LH} of Theorem \ref{proposition-Main} are computed from the coefficients of the four structure relations obtained in step 2 using the relations \eqref{Delta1}-\eqref{Delta4}, and \eqref{A}-\eqref{E}.

\vspace{0.25cm}

\item {\bf Computation of differential equations for semiclassical, and classical polynomials}

\vspace{0.25cm}

- Coefficients of the second-order differential equation of Corollary \ref{corollary-eq-diff-semiclassical} are computed from the coefficients of the four structure relations obtained in step 2 using the relations \eqref{2DF_1_SC}-\eqref{2DF_3_SC}.

\vspace{0.25cm}

- Coefficients of the third-order and fourth-order differential equations of Corollary \ref{3de4de_sc} are computed from the coefficients of the four structure relations obtained in step 2 using the relations given in that corollary.

\vspace{0.25cm}

\item {\bf Computation of reduced coefficients of differential equations}

\vspace{0.25cm}

- Computation of the greatest common divisor of the coefficients of each differential equation, followed by the determination of reduced coefficients by dividing by this divisor.

\vspace{0.25cm}

\item {\bf Computation of orthogonal polynomials}

\vspace{0.25cm}

-  Polynomials $P_n(x)$, and $P^{(1)}_n(x)$, for fixed values of the integers $n$,  are computed using recurrence relations \eqref{ic_TTRR}-\eqref{TTRR}, and \eqref{ic_ASSTTRR}-\eqref{ASSTTRR}, respectively.

\vspace{0.25cm}

\item {\bf Presentation of results}

\vspace{0.25cm}

- Coefficients of the structure relations and differential equations are expressed in the canonical basis, with factorization carried out as far as possible with respect to $n$, and the parameters of each sequence.

\end{enumerate}

\section{Results for Laguerre-Hahn of class 0 families analogous to Hermite} \label{Section5}

In order to illustrate the developed algorithmic approach, we now apply it to two families of Laguerre–Hahn polynomials analogous to Hermite. These families are of special interest, as they belong to class zero of Laguerre-Hahn, and the classical Hermite sequence is recovered as a particular case. We then present the coefficients of the corresponding structure relations, as well as the resulting fourth-order differential equations. Moreover, in the classical case, we present the differential equations of orders three and four. All these results provide explicit expressions that have not previously appeared in the literature. Here, we treat all sequence parameters as symbols. Results for several particular values of parameters are available in the software.
The characteristic elements of these sequences, which serve as input data, are provided in \cite{Bouakkaz-these,Bouakkaz-Maroni-1991}.

\vspace{0.25cm}

\subsection{Case 1 analogous to Hermite}\label{Section5_H_Case1}

\vspace{0.25cm}

In this case, some coefficients in the structure relations and the fourth-order differential equation have initial conditions, expressed using the following notation
$$
\epsilon_n=1-(1-\rho)\delta_{n,0},\quad n\geq 0.
$$
 However, the reduced coefficients of the differential equation are free of initial conditions.
 
 \vspace{0.25cm}
 
\noindent \textbf{Regularity conditions} 
\[
\tau,\ \lambda,\ \rho \in\mathbb{C}, \quad \rho\neq 0, \quad \tau\not=-n, \ n\geq 1.
\]
\noindent {\bf Recurrence coefficients} 
\begin{eqnarray}
&&\beta_{0}=\lambda,\quad \beta_{n+1}=0,\ n\geq 0; \quad 
\gamma_{1}=\rho\frac{\tau+1}{2},\quad \gamma_{n+1}=\frac{n+\tau+1}{2},\ n\geq 1. \notag
\end{eqnarray}
{\bf Coefficients of the Stieltjes equation} 
\begin{eqnarray}
&&\Phi(x)=1,\quad
B(x)=2\frac{\rho-1}{\rho}x^{2}+2\lambda\frac{2-\rho}{\rho}x+1-\rho(\tau+1)-\frac{2\lambda^{2}}{\rho},\notag\\
&&C(x)=2\frac{\rho-2}{\rho}x+\frac{4\lambda}{\rho},\quad
D(x)=-\frac{2}{\rho}. \notag
\end{eqnarray}
{\bf Coefficients of the Laguerre-Hahn structure relation} 
$$
C_{n+1}(x)=-2x,\quad  D_{n+1}(x)=-2,\quad n\geq 0.
$$
{\bf Relation to the classical Hermite form ${\cal H}$} \cite[Proposition 4.1]{Mohamed-Imed-2025} 
$$u_{0}={\cal H}^{(\tau)}\left(\lambda;\,{0\atop\rho}\,;1\right).$$


\noindent Next, we present a list of the results obtained using the software. 

\vspace{0.25cm}

\noindent {\bf First structure relation}
\begin{eqnarray}
G_{0,1}(x;n)P^{(1)}_{n-1}(x)+G_{1,1}(x;n)P^{(1)}_{n}(x)+H_{1}(x;n)P_{n}(x)=&&\notag\\
\Phi(x) P'_{n+1}(x)+M_{0,1}(x;n)P_{n+1}(x),\ n\geq 0.\notag&&
\end{eqnarray}
\begin{eqnarray}
&&G_{0,1}(x;n)=0,\quad 
G_{1,1}(x;n)=B(x),\quad  H_{1}(x;n)=\epsilon_n(n+\tau +1), \notag\\
&& \Phi(x)  =  1,\quad M_{0,1}(x;n)= \frac{2}{\rho}\left( x(\rho -1)+\lambda \right).\notag
\end{eqnarray}
\noindent {\bf Second structure relation}
\begin{eqnarray}
G_{0,2}(x;n)P^{(1)}_{n-1}(x)+ G_{1,2}(x;n)P^{(1)}_{n}(x)+ H_{2}(x;n)P_{n}(x)= &&\notag\\ 
\Phi^2(x)P''_{n+1}(x)+M_{1,2}(x;n)P'_{n+1}(x)+M_{0,2}(x;n)P_{n+1}(x),\ n\geq 0.&&\notag
\end{eqnarray}
\begin{eqnarray}
G_{0,2}(x;n) & = &\frac{2\epsilon_n}{\rho}(n+\tau +1)\Big(2 x^2(\rho -1) -
  2 x\lambda  (\rho -2)- 2 \lambda ^2+\rho-\rho ^2 (\tau+1) \Big), \notag\\
G_{1,2}(x;n) & = &\frac{2 }{\rho^2 }\Big(  -2x^3 (\rho -1)+2 x^2\lambda  (2 \rho -3) +
x\left(6 \lambda ^2-\rho(3 +2 \lambda ^2 ) +\rho ^2 (\tau +3)\right)  \notag\\
&&-\lambda  \left(2 \lambda ^2-3 \rho+\rho ^2 (\tau +2 ) \right)\Big),\notag\\
H_2(x;n) & = & \frac{2\epsilon_n}{\rho} (n+\tau +1)(x-\lambda),\quad 
\Phi^2(x) =\ 1, \notag\\
M_{1,2}(x;n) & = & \frac{2 }{\rho }\left(x(\rho -1)+\lambda \right),\notag\\
M_{0,2}(x;n) & = & \frac{2 }{\rho^2 }\Big( -2 x^2(\rho -1)+2 x\lambda  (\rho -2) +2 \lambda ^2-2 \rho+\rho ^2(n+2 \tau +3)\Big) .\notag 
\end{eqnarray}
\noindent {\bf Third structure relation}
\begin{eqnarray}
G_{0,3}(x;n)P^{(1)}_{n-1}(x)+G_{1,3}(x;n)P^{(1)}_{n}(x)+H_{3}(x;n)P_{n}(x)=&&\notag\\
\Phi^3(x)P^{(3)}_{n+1}(x)+M_{2,3}(x;n)P''_{n+1}(x)+M_{1,3}(x;n)P'_{n+1}(x)+M_{0,3}(x;n)P_{n+1}(x),\ n\geq 0.&&\notag
\end{eqnarray}
\begin{eqnarray}
 G_{0,3}(x;n)& = & \frac{2\epsilon_n}{\rho^2}(\tau +n+1) 
 \Big(4 x^3(\rho -1)^2 -4 x^2\lambda  (\rho -3) (\rho -1)\notag\\
 &&-2  x \left(\rho ^3 (\tau+1)-\rho ^2 (\tau +5)+
 4 \rho(\lambda^2+1) -6\lambda ^2 \right)\notag\\
&& -  \lambda  \left(4 \lambda ^2+\rho ^2( 2\tau +5 )-8 \rho \right)\Big),\notag\\
&& \notag\\
G_{1,3}(x;n)& = & \frac{2}{\rho^3}\Big(4 x^4(\rho -1)-4 x^3\lambda  (3 \rho -4)\notag\\
&&+ 
2 x^2 \left(-12 \lambda ^2+6 \left(\lambda ^2+1\right) \rho +2n\rho^2  \left( 1 - \rho \right)
  +\rho ^2 (\tau -4) -2 \rho ^3 (\tau +1) \right)\notag\\
  &&-2 x\lambda  \left( -8 \lambda ^2+2 \left(\lambda ^2+6\right) \rho+2n \rho^2 \left(2 - \rho \right)
 +\rho ^2 (2 \tau -5) -2\rho ^3 (\tau +1)\right) \notag\\
 &&-4 \lambda ^4+12 \lambda ^2 \rho+\rho ^2 \left(2 \lambda ^2 (\tau -1)-3\right) +n\rho ^2
   \left(4 \lambda ^2 +2 \rho ^2 (\tau +1)-2 \rho \right)\notag\\
 &&+ \rho ^3 (1-\tau )+2 \rho ^4 (\tau +1)^2\Big),\notag\\  
   &&\notag\\ 
H_{3}(x;n)& = & \frac{2\epsilon_n}{\rho^2}(n+\tau +1)\Big(2 x^2(2 \rho -1)  -4 x\lambda 
   (\rho -1)  -\left(2 \lambda ^2-3 \rho+2 \rho ^2 (\tau +1 )\right)\Big),\notag\\  
   \Phi^3(x)& = & 1,\notag\\
M_{2,3}(x;n)& = & \frac{2 }{\rho }\left(x(\rho -1)+\lambda \right),\notag\\
M_{1,3}(x;n)& = &\frac{2 }{\rho^2}\Big(-2 x^2(\rho -1)+2x \lambda  (\rho -2) + 
2 \lambda ^2-3 \rho+ \rho ^2(n+2 \tau +4 )\Big),\notag\\   
M_{0,3}(x;n)& = & \frac{4 }{\rho^3}\Big(2x^3 (\rho -1)-2 x^2\lambda  (2 \rho -3)
  +x\left(-6 \lambda ^2+ \rho(2 \lambda ^2 +5)+ \rho ^2(n-4 )
   \right)\notag\\
 &&+\lambda  \left(2 \lambda ^2-5 \rho+ \rho ^2(-n+2) \right).\notag
\end{eqnarray}
\noindent {\bf Fourth structure relation}
\begin{eqnarray}
G_{0,4}(x;n) P^{(1)}_{n-1}(x)+G_{1,4}(x;n) P^{(1)}_{n}(x)+H_{4}(x;n)P_{n}(x)=&&\notag\\
\Phi^4(x)P^{(4)}_{n+1}(x)+M_{3,4}(x;n) P^{(3)}_{n+1}(x)+M_{2,4}(x;n) P''_{n+1}(x)&&\notag\\
+M_{1,4}(x;n) P'_{n+1}(x)+M_{0,4}(x;n) P_{n+1}(x),\ n\geq 0.&&\notag
\end{eqnarray}
\begin{eqnarray}
G_{0,4}(x;n) & = & \frac{4\epsilon_n}{\rho^3}(n+\tau+1)\Big(
4x^4 (\rho -1) \left(\rho ^2-\rho +1\right)-4 x^3\lambda  (\rho -2) \left(\rho ^2-2 \rho +2\right) \notag\\
 &&-2x^2 \Big(12 \lambda ^2-4\rho( 3
   \lambda ^2 +2) +\rho ^2(4 \lambda ^2  - \tau  +13  ) \notag\\
 &&+n \rho ^2(\rho-1)+\rho ^3(
   \tau -6 )+\rho ^4 (\tau +1) \Big)\notag\\
 &&-x\lambda 
  \left(-16 \lambda ^2+8 \rho ( \lambda ^2+4)+ 2n\rho ^2( 2-\rho) + \rho ^2 (4\tau -33 ) + \rho ^3(5-2 \tau)\right)\notag\\
&&  -4 \lambda ^4-16 \lambda ^2 \rho +\rho ^2 \left(\lambda ^2 (7-2 \tau )+4\right)+n \left(-2 \lambda ^2 \rho ^2+\rho ^3-\rho ^4 (\tau+1)\right)\notag\\
 &&+\rho ^3 (2 \tau -3)-\rho ^4 (\tau +1) (2 \tau +1)\Big),\notag\\
&& \notag\\ 
G_{1,4}(x;n) & = &\frac{4}{\rho^4}\Big(-4 x^5(\rho -1) +4x^4 \lambda  (4 \rho -5) -2 x^3 \big(-20 \lambda ^2 +2n \rho^2\left(\rho ^2-3 \rho +2 \right)\notag\\
  &&+2 \rho ^4 (\tau +1)-6 \rho ^3 (\tau +1)
  +3 \rho ^2 (\tau -2) +2 \left(6 \lambda ^2+5\right) \rho\big)\notag\\
   &&+2  x^2\lambda  \big(-20 \lambda ^2 +
   2n \rho ^2 \left(\rho ^2-6 \rho +6\right)+2 \rho ^4 (\tau +1)-12 \rho ^3 (\tau +1)\notag\\
  && +3 \rho ^2 (3 \tau -4) +2\rho \left(4 \lambda ^2+15\right) \big)
  +x\Big(20 \lambda ^4+\rho ^3 \left(12 \lambda ^2 \tau +12 \lambda ^2+11 \tau -1\right)\notag\\
   &&-3 \rho ^2 \left(6 \lambda ^2 \tau -4 \lambda ^2-5\right)-4 \lambda ^2 \left(\lambda
   ^2+15\right) \rho \notag\\
   && + n \left(-24 \lambda ^2
   \rho ^2+2 \left(6 \lambda ^2+7\right) \rho ^3-4 \rho ^4 (\tau +4)+2 \rho ^5 (\tau +1)\right)+2 \rho ^5 (\tau +1)^2\notag\\
  && -4\rho ^4 (\tau +1) (\tau +4)     \Big)\notag\\
   &&-\lambda  \left(4 \lambda ^4-3 \rho ^2 \left(2 \lambda ^2 \tau -5\right)-20 \lambda ^2 \rho +n
   \left(-8 \lambda ^2 \rho ^2+14 \rho ^3+\rho ^4 (-4 \tau -9)\right)\right.\notag\\
  &&\left.+\rho ^3 (11 \tau +2)+\rho ^4 (17 \tau +9) \right)\Big),\notag\\
H_{4}(x;n) &= &\frac{4}{\rho ^3} (n+\tau +1)\Big(2x^3\left(2 \rho ^2-2 \rho +1\right)-
2x^2\lambda  \left(2 \rho ^2-4 \rho +3\right)\notag\\
&&-x
 \left( -6 \lambda ^2+\rho(4 \lambda ^2+7) -12\rho ^2+2 \rho ^3 (\tau +1) \right)-\lambda  \left(2 \lambda ^2+5 \rho ^2-7 \rho \right)\Big),\notag\\
\Phi^4(x)& = &1,\quad \notag\\
M_{3,4}(x;n)& = & \frac{2 }{\rho }(x(\rho -1)+ \lambda),\notag\\
M_{2,4}(x;n)& = & \frac{2 }{\rho ^2}\Big( -2 x^2(\rho -1)+2 x\lambda  (\rho -2) +
\left(2 \lambda ^2+n \rho ^2+ \rho ^2 (2\tau +5 )-4 \rho \right)\Big),\notag\\
M_{1,4}(x;n)& = & \frac{4}{\rho ^3}\Big(
  2 x^3(\rho -1)-2x^2 \lambda  (2 \rho -3)+
  x\left(-6 \lambda ^2+\rho(2 \lambda ^2 +7) + \rho ^2(n-6 )\right)\notag\\
   &&+
 \lambda  \left(2 \lambda ^2-7 \rho+\rho ^2(-n+3) \right)\Big), \notag\\
M_{0,4}(x;n)& = &\frac{8}{\rho ^4}\Big( 
-2 x^4(\rho -1)+2 x^3\lambda  (3 \rho -4)\notag\\
   &&+x^2
 \left(12 \lambda +n \left(4 \rho ^3-3 \rho^2\right)- \rho ^2 \left(2(\tau -3)+3 \left(2 \lambda ^2+3\right) \right)+4 \rho ^3 (\tau +1)\right)\notag\\
   &&+x\lambda  \left(-8 \lambda ^2+2 \rho\left(\lambda ^2+9\right)  +2n\rho ^2 \left(3 -2 \rho \right)+\rho ^2 (4 \tau -7)-4
   \rho ^3 (\tau +1)\right) \notag\\
&&  -\left( -2 \lambda ^4+9 \lambda ^2 \rho+\rho ^2 \left(\lambda ^2 (2\tau -1)-4\right)+n\rho ^2 \left(3 \lambda ^2 -3 \rho +2 \rho ^2x (\tau +1)\right)\right.\notag\\
&&\left.+\rho ^3 (1-2 \tau )+2 \rho ^4(\tau +1)^2\right)\Big).\notag
\end{eqnarray}
\noindent {\bf The fourth-order differential equation}
\begin{eqnarray}
\mathcal{A}(x;n)P^{(4)}_{n+1}(x)
+\mathcal{B}(x;n)P^{(3)}_{n+1}(x)
+\mathcal{C}(x;n)P''_{n+1}(x)
+\mathcal{D}(x;n)P'_{n+1}(x)&&\notag\\
+\mathcal{E}(x;n)P_{n+1}(x)=0,\ n\geq 0. &&\notag
\end{eqnarray}
\noindent  {\bf Greatest common factor between} $\mathcal{A}$, $\mathcal{B}$, $\mathcal{C}$, $\mathcal{D}$, and $\mathcal{E}$: 
\begin{eqnarray}
 c(x;0)=4 (\tau +1)^2,\quad  c(x;n)= \frac{4 (n+\tau +1)^2}{\rho ^2},\quad n\geq 1. \notag
\end{eqnarray}
$$\widehat{\mathcal{A}}=\mathcal{A}/c,\quad \widehat{\mathcal{B}}=\mathcal{B}/c,\quad \widehat{\mathcal{C}}=\mathcal{C}/c,\quad 
\widehat{\mathcal{D}}=\mathcal{D}/c,\quad \widehat{\mathcal{E}}=\mathcal{E}/c.$$
\begin{eqnarray}
\widehat{\mathcal{A}}(x;n) & = &
   -8x^4 (n+1) (\rho -1)^2+4x^3\lambda  (\rho -1) \left(n (4 \rho -8)+3 \rho -8\right)\notag\\
   &&-4x^2 \Big(\lambda ^2 \left(\rho ^2-10 \rho +12\right)+n \big(\lambda ^2 \left(2 \rho ^2-12 \rho
   +12\right)-2 \rho ^3+4 \rho ^2 \notag\\
   &&+\left(2 \rho ^2-2 \rho ^3\right) \tau -2 \rho
   \big)-\rho ^3+2 \rho ^2+\left(-\rho ^3-\rho ^2+2 \rho \right) \tau -\rho\Big)\notag\\
   &&-2x\lambda\left(\lambda ^2 (6 \rho -16)+n \left(\lambda ^2 (8 \rho -16)+4 \rho ^3-12 \rho ^2+\left(4
   \rho ^3-8 \rho ^2\right) \tau +8 \rho \right)\right.\notag\\
   &&\left.+\left(\rho ^3-8 \rho \right) \tau
   +\rho ^3-5 \rho ^2+4 \rho\right)\notag\\
   &&-8 \lambda ^4+\lambda ^2 \left(4 \rho -7 \rho ^2\right)+\tau  \left(\lambda ^2 \left(-4
   \rho ^2-8 \rho \right)-10 \rho ^3+10 \rho ^2\right)\notag\\
   &&+n \Big(-8 \lambda ^4+\lambda ^2
   \left(8 \rho -8 \rho ^2\right)+\tau  \left(-8 \lambda ^2 \rho ^2-4 \rho ^4+4 \rho
   ^3\right)\notag\\
   && -2 \rho ^4 \tau ^2-2 \rho ^4+4 \rho ^3-2 \rho ^2\Big) -4 \rho ^3 \tau ^2-6
   \rho ^3+12 \rho ^2-6 \rho,\notag
   \end{eqnarray} 
  \begin{eqnarray} 
\widehat{\mathcal{B}}(x;n) & =  & 2\Big(
  16x^3(n+1) (\rho -1)^2 -6x^2 \lambda  (\rho -1)\left(  n (4 \rho -8)+3 \rho -8  \right)\notag\\
   &&+4x \Big(      \lambda ^2 \left(\rho ^2-10 \rho +12\right)+n \big(\lambda ^2 \left(2 \rho ^2-12 \rho
   +12\right)-2 \rho ^3+4 \rho ^2\notag\\
   &&+\left(2 \rho ^2-2 \rho ^3\right) \tau -2 \rho
   \big) -\rho ^3+2 \rho ^2+\left(-\rho ^3-\rho ^2+2 \rho \right) \tau -\rho)\Big)\notag\\
   &&+\lambda\left(  \lambda ^2 (6 \rho -16)+n \left(\lambda ^2 (8 \rho -16)+4 \rho ^3-12 \rho ^2+\left(4
   \rho ^3-8 \rho ^2\right) \tau +8 \rho \right)\right.\notag\\
   &&\left.+\left(\rho ^3-8 \rho \right) \tau
   +\rho ^3-5 \rho ^2+4 \rho \right)\Big),\notag
\end{eqnarray}
\begin{eqnarray}
\widehat{\mathcal{C}}(x;n) & = & 2\Big(16x^6(n+1) (\rho -1)^2
  -8x^5\lambda(\rho -1)\left(   n (4 \rho -8)+3 \rho -8    \right) \notag\\
   &&-8x^4\left(   \lambda ^2 \left(-\rho ^2+10 \rho -12\right)+n^2 \left(2 \rho ^2-4 \rho +2\right)\right.\notag\\
   &&\left.+n\left(\lambda ^2 \left(-2 \rho ^2+12 \rho -12\right)+2 \rho ^3+2 \rho ^2+\left(2
   \rho ^3+2 \rho ^2-8 \rho +4\right) \tau -10 \rho +6\right)\right.\notag\\
   &&\left.+\rho ^3+2 \rho
   ^2+\left(\rho ^3+5 \rho ^2-10 \rho +4\right) \tau -7 \rho +4   \right) \notag\\
   &&+4x^3\lambda\left(      \lambda ^2 (6 \rho -16)+n^2 \left(8 \rho ^2-24 \rho +16\right)\right.\notag\\
   &&\left.+n \left(\lambda ^2 (8
   \rho -16)+4 \rho ^3+12 \rho ^2+\left(4 \rho ^3+8 \rho ^2-48 \rho +32\right) \tau -64
   \rho +48\right)\right.\notag\\
   &&\left.+\rho ^3+9 \rho ^2+\left(\rho ^3+12 \rho ^2-52 \rho +32\right) \tau
   -42 \rho +32    \right)\notag\\   
   &&-2x^2\left( -8 \lambda ^4+\lambda ^2 \left(3 \rho ^2-84 \rho +96\right)\right.\notag\\
   &&+\tau  \left(\lambda ^2
   \left(4 \rho ^2-88 \rho +96\right)-30 \rho ^3+30 \rho ^2\right)\notag\\
   &&\left.+n^2 \left(\lambda ^2
   \left(8 \rho ^2-48 \rho +48\right)-8 \rho ^3+16 \rho ^2+\left(8 \rho ^2-8 \rho
   ^3\right) \tau -8 \rho \right)\right.\notag\\
   &&+n \left(-8 \lambda ^4+\lambda ^2 \left(16 \rho ^2-136
   \rho +144\right)+\tau  \Big(\lambda ^2 \left(8 \rho ^2-96 \rho +96\right)\right. \notag\\
   &&-4 \rho
   ^4-36 \rho ^3+56 \rho ^2-16 \rho \Big)\notag\\
   &&\left.-2 \rho ^4-20 \rho ^3+70 \rho ^2+\left(-2
   \rho ^4-16 \rho ^3+16 \rho ^2\right) \tau ^2-72 \rho +24\right)\notag\\
   &&\left.-18 \rho ^3+60 \rho ^2+\left(-12 \rho ^3-8 \rho ^2+16 \rho \right) \tau ^2-66 \rho +24      \right) \notag\\
   &&-2x\lambda\left(         \tau  \left(\lambda ^2 (24 \rho -64)+10 \rho ^3-28 \rho ^2\right)+\lambda ^2 (28 \rho
   -64)\right.\notag\\
   &&\left.+n^2 \left(\lambda ^2 (16 \rho -32)+8 \rho ^3-24 \rho ^2+\left(8 \rho ^3-16 \rho
   ^2\right) \tau +16 \rho \right)\right.\notag\\
   &&+n \left(\tau  \left(\lambda ^2 (32 \rho -64)+40 \rho
   ^3-96 \rho ^2+32 \rho \right)+\lambda ^2 (48 \rho -96)+24 \rho ^3-96 \rho^2\right.\notag\\
&&\left.+\left(16 \rho ^3-32 \rho ^2\right) \tau ^2+120 \rho -48\right)+\left(4 \rho ^3-32
 \rho \right) \tau ^2\notag\\
 &&\left.+6 \rho ^3-45 \rho ^2+87 \rho -48    \right) \notag \\
 && -\left(32 \lambda ^4+\lambda ^2 \left(45 \rho ^2-66 \rho +48\right)+\tau ^2 \left(\lambda ^2
   \left(16 \rho ^2+32 \rho \right)+64 \rho ^3-56 \rho ^2\right)\right.\notag\\
   &&\left.+\tau  \left(32 \lambda
   ^4+52 \lambda ^2 \rho ^2+84 \rho ^3-132 \rho ^2+48 \rho \right)+n^2 \left(16 \lambda
   ^4+\lambda ^2 \left(16 \rho ^2-16 \rho \right)\right.\right.\notag\\
   &&\left.\left.+\tau  \left(16 \lambda ^2 \rho ^2+8
   \rho ^4-8 \rho ^3\right)+4 \rho ^4 \tau ^2+4 \rho ^4-8 \rho ^3+4 \rho ^2\right)\right.\notag\\
   &&+n
   \Big(48 \lambda ^4+\lambda ^2 \left(66 \rho ^2-96 \rho +48\right)+\tau ^2 \left(32
   \lambda ^2 \rho ^2+28 \rho ^4-16 \rho ^3\right)\notag\\
   &&+\tau  \left(32 \lambda ^4+\lambda ^2
   \left(80 \rho ^2-32 \rho \right)+32 \rho ^4-28 \rho ^3-4 \rho ^2\right)\notag\\
   &&+8 \rho ^4
   \tau ^3+12 \rho ^4-12 \rho ^3-12 \rho ^2+12 \rho \Big)\notag\\
   &&\left.+16 \rho ^3 \tau ^3+36 \rho
   ^3-72 \rho ^2+36 \rho \right)\Big),\notag
\end{eqnarray}
\begin{eqnarray}
\widehat{\mathcal{D}}(x;n)& = &4\Big( 8x^5  (n+1) (\rho -1)^2-8x^4  \lambda (\rho -1)\left(  n (3 \rho -6)+2 \rho -6   \right)\notag\\
   &&+4x^3 \left(   \lambda ^2 \left(2 \rho ^2-20 \rho +24\right)+n^2 \left(4 \rho ^2-8 \rho +4\right)\right.\notag\\
   &&+n
   \big(\lambda ^2 \left(4 \rho ^2-24 \rho +24\right)-4 \rho ^3+20 \rho ^2\notag\\
   &&+\left(-4
   \rho ^3+12 \rho ^2-16 \rho +8\right) \tau -28 \rho +12\big)\notag\\
   &&\left.-\rho ^3+10 \rho
   ^2+\left(-\rho ^3+3 \rho ^2-10 \rho +8\right) \tau -17 \rho +8   \right)\notag\\
   &&+2x^2 \lambda \left(\lambda ^2 (16 \rho -40)+n^2 \left(-12 \rho ^2+36 \rho -24\right)\right.\notag\\
   &&\left.+n \left(\lambda ^2
   (20 \rho -40)+10 \rho ^3-71 \rho ^2+\left(10 \rho ^3-44 \rho ^2+72 \rho -48\right)
   \tau +133 \rho -72\right)\right.\notag\\
   &&\left.+2 \rho ^3-37 \rho ^2+\left(2 \rho ^3-16 \rho ^2+44 \rho
   -48\right) \tau +83 \rho -48 \right)\notag\\
 &&  + 2x \left( 12 \lambda ^4+\lambda ^2 \left(17 \rho ^2-50 \rho +48\right)+\tau  \left(\lambda ^2
   \left(10 \rho ^2-28 \rho +48\right)-5 \rho ^3+33 \rho ^2-28 \rho \right)\right.\notag\\
   &&\left.+n^2
   \left(\lambda ^2 \left(4 \rho ^2-24 \rho +24\right)-4 \rho ^3+8 \rho ^2+\left(4 \rho
   ^2-4 \rho ^3\right) \tau -4 \rho \right)\right.\notag\\
   &&+n \Big(12 \lambda ^4+\lambda ^2 \left(24
   \rho ^2-84 \rho +72\right)+\tau  \big(\lambda ^2 \left(20 \rho ^2-48 \rho
   +48\right)+6 \rho ^4-36 \rho ^3\notag\\
   && +58 \rho ^2-28 \rho \big)+3 \rho^4 - 28 \rho^3 + 71
   \rho^2 + \left (3 \rho^4 - 8 \rho^3 + 8 \rho^2 \right)\tau^2 - 70 \rho + 24 \Big)  \notag\\
   &&+\lambda\left( \tau  \left(\lambda ^2 (8 \rho -32)+10 \rho ^3-66 \rho ^2+56 \rho \right)+\lambda ^2 (2
   \rho -32)\right.\notag\\
   &&\left.+n^2 \left(\lambda ^2 (8 \rho -16)+4 \rho ^3-12 \rho ^2+\left(4 \rho ^3-8
   \rho ^2\right) \tau +8 \rho \right)\right.\notag\\
   &&\left.+n \left(\tau  \left(\lambda ^2 (16 \rho -32)+25
   \rho ^3-88 \rho ^2+56 \rho \right)+\lambda ^2 (14 \rho -48)+17 \rho ^3-85 \rho
   ^2\right.\right.\notag\\
   &&\left.\left.+\left(8 \rho ^3-16 \rho ^2\right) \tau ^2+116 \rho -48\right)+6 \rho ^3-60 \rho
   ^2+\left(4 \rho ^3-16 \rho ^2\right) \tau ^2+102 \rho -48\right)\Big), \notag
 \end{eqnarray}
 \begin{eqnarray}
\widehat{\mathcal{E}}(x;n)& = & 4(n+1)\Big(  
-8x^4 (n+1)^2 (\rho -1)^2\notag\\
&&+4x^3\lambda   (\rho -1) \left(n^2 (4 \rho -8)+n (5 \rho -16)+2 \rho -10)\right)\notag\\
   &&+4x^2 \left(\lambda ^2 (14-8 \rho )+n^2 \left(\lambda ^2 \left(2 \rho ^2-12 \rho +12\right)-2 \rho
   ^3+4 \rho ^2+\left(2 \rho ^2-2 \rho ^3\right) \tau -2 \rho \right)\right.\notag\\
   &&\left.+n \left(\lambda
   ^2 \left(\rho ^2-18 \rho +24\right)-\rho ^3+6 \rho ^2+\left(-\rho ^3-5 \rho ^2+6
   \rho \right) \tau -9 \rho +4\right)-\rho ^3+10 \rho ^2\right.\notag\\
   &&\left.+\left(-\rho ^3+3 \rho ^2-10
   \rho +8\right) \tau -17 \rho +8\right)\notag\\
   &&+2x\lambda  \left(-12 \lambda ^2+n^2 \left(\lambda ^2 (8 \rho -16)+8 \rho ^3-8 \rho ^2 \tau -12 \rho ^2+8
   \rho \right)\right.\notag\\
   &&+n \left(\lambda ^2 (10 \rho -32)-\rho ^3-11 \rho ^2+\left(-\rho ^3+8
   \rho ^2-24 \rho \right) \tau +28 \rho -16\right)\notag\\
   &&\left.+\left(-10 \rho ^2+24 \rho-32\right) \tau -17 \rho ^2+49 \rho -32)\right)\notag\\
   &&-\left(  \tau  \left(\lambda ^2 (32-8 \rho )-10 \rho ^3+66 \rho ^2-56 \rho \right)+\lambda ^2
   (32-2 \rho )\right.\notag\\
   &&\left.+n^2 \left(8 \lambda ^4+\lambda ^2 \left(8 \rho ^2-8 \rho \right)+\tau 
   \left(8 \lambda ^2 \rho ^2+4 \rho ^4-4 \rho ^3\right)+2 \rho ^4 \tau ^2+2 \rho ^4-4
   \rho ^3+2 \rho ^2\right)\right.\notag\\
   &&\left.+n \left(16 \lambda ^4+\lambda ^2 \left(19 \rho ^2-20 \rho
   +16\right)+\tau  \left(\lambda ^2 \left(4 \rho ^2+24 \rho \right)-4 \rho ^4+38 \rho
   ^3-34 \rho ^2\right)\right.\right.\notag\\
   &&\left.-2 \rho ^4+26 \rho ^3-46 \rho ^2+\left(12 \rho ^3-2 \rho
   ^4\right) \tau ^2+22 \rho \right)\notag\\
   &&\left.-6 \rho ^3+60 \rho ^2+\left(16 \rho ^2-4 \rho
   ^3\right) \tau ^2-102 \rho +48 \right)\Big).\notag
   \end{eqnarray}
   
\subsubsection{Results for the classical Hermite family}
\vspace{0.25 cm}

\noindent  If  $\tau=0$, $\lambda =0$, and $\rho=1$, we recover the classical Hermite family \cite{Maroni-1994}.
\vspace{0.25 cm}

\noindent  {\bf Coefficients of the four structure relations, for $n\geq 0$}
\begin{eqnarray}
&& G_{0,1}=0,\quad G_{1,1}=0,\quad H_{1}= n+1,\quad \Phi=1,\quad M_{0,1}=0. \notag\\
&& G_{0,2}=0 ,\quad G_{1,2}= 0,\quad H_{2}=2 (n+1) x ,\quad \Phi^2=1,\quad M_{1,2}=0,\quad M_{0,2}=2(n+1). \notag \\ 
&&G_{0,3}=0,\quad G_{1,3}= 0,\quad H_{3}=4 (n+1) x^2+2 (n+1),\quad \Phi^3=1,\notag\\
&& M_{2,3}=0 ,\quad M_{1,3}= 2 (n+1),\quad M_{0,3}= 4 (n+1) x. \notag\\
&&G_{0,4}=0,\quad G_{1,4}=0,\quad H_{4}= 8 (n+1) x^3+12 (n+1) x,\quad \Phi^4=1,\notag\\
&& M_{3,4}=0 ,\quad M_{2,4}=2 ( n+1) ,\quad M_{1,4}=4 (n+1) x,\quad 
M_{0,4}= 8 (n+1) x^2+8 (n+1). \notag
\end{eqnarray}
\noindent  {\bf The four structure relations, for $n\geq 0$}
\begin{eqnarray}
&& (n+1)P_{n}(x)= P'_{n+1}(x),\notag\\
&& 2 (n+1) x P_{n}(x)= P''_{n+1}(x)+2(n+1)P_{n+1}(x),\notag\\
&& 2 (n+1)\left( 2x^2+1\right)P_{n}(x)=P^{(3)}_{n+1}(x)+2 (n+1)P'_{n+1}(x)+4 (n+1) xP_{n+1}(x),\notag\\
&& 4(n+1) x\left(2x^2+3\right)P_{n}(x)=P^{(4)}_{n+1}(x)+2 ( n+1) P''_{n+1}(x)+4 (n+1) xP'_{n+1}(x)+\notag\\
&&8 (n+1) (x^2+1)P_{n+1}(x).\notag
\end{eqnarray}
\noindent {\bf Determinants}
\begin{eqnarray}
&& \Delta_1=0 ,\quad  \Delta_2=0 ,\quad  \Delta_3=0 ,\quad  \Delta_4=0.\notag
\end{eqnarray}
\noindent {\bf Coefficients of the fourth-order differential equation as Laguerre-Hahn sequence, for $n\geq 0$}
\begin{eqnarray}
&& \widehat{\mathcal{A}}(x;n)=0, \quad
\widehat{\mathcal{B}}(x;n)=  0, \quad
\widehat{\mathcal{C}}(x;n)= 0, \quad \widehat{\mathcal{D}}(x;n)= 0,\quad 
\widehat{\mathcal{E}}(x;n)= 0.\notag \\
\notag 
\end{eqnarray}
\noindent  {\bf The second-order differential equation as semiclassical sequence}
\[
\mathcal{C}_2(x;n)P_{n+1}''(x) + \mathcal{D}_2(x;n)P_{n+1}'(x) + \mathcal{E}_2(x;n)P_{n+1}(x) = 0,\ n\geq 0.
\]
\noindent  {\bf Greatest common factor between} $\mathcal{C}_2$, $\mathcal{D}_2$, and $\mathcal{E}_2$:
$$c(x;n)=n+1, \ n\geq 0.$$
\noindent  {\bf Reduced coefficients of the second-order differential equation}
$$\widehat{\mathcal{C}}_2=\mathcal{C}_2/c,\quad \widehat{\mathcal{D}}_2=\mathcal{D}_2/c,\quad 
\widehat{\mathcal{E}}_2=\mathcal{E}_2/c.$$
$$
\widehat{\mathcal{C}}_2(x;n) = 1 ,\quad
\widehat{\mathcal{D}}_2(x;n) = -2x ,\quad
\widehat{\mathcal{E}}_2(x;n) =  2(n+1).
$$

\vspace{0.25cm}
\noindent  {\bf The third-order differential equation as semiclassical sequence}
\[
\mathcal{B}_3(x;n)P_{n+1}'''(x) + \mathcal{C}_3(x;n)P_{n+1}''(x) + \mathcal{D}_3(x;n)P_{n+1}'(x) + \mathcal{E}_3(x;n)P_{n+1}(x) = 0,\ n\geq 0.
\]
\noindent  {\bf Greatest common factor between} $\mathcal{B}_3$, $\mathcal{C}_3$, $\mathcal{D}_3$, and $\mathcal{E}_3$:
$$c(x;n)=n+1,\ n\geq 0.$$
\noindent  {\bf Reduced coefficients of the third-order differential equation}
$$\widehat{\mathcal{B}}_3=\mathcal{B}_3/c,\quad \widehat{\mathcal{C}}_3=\mathcal{C}_3/c,\quad \widehat{\mathcal{D}}_3=\mathcal{D}_3/c,\quad \widehat{\mathcal{E}}_3=\mathcal{E}_3/c.$$
$$
\widehat{\mathcal{B}}_3(x;n) = 1,\quad
\widehat{\mathcal{C}}_3(x;n) =0 ,\quad
\widehat{\mathcal{D}}_3(x;n) = -4x^2+2n ,\quad
\widehat{\mathcal{E}}_3(x;n) =4x(n+1) .
$$

\vspace{0.25cm}
\noindent  {\bf The fourth-order differential equation as semiclassical form}
\begin{eqnarray}
\mathcal{A}_4(x;n)P_{n+1}^{(4)}(x) + \mathcal{B}_4(x;n)P_{n+1}'''(x) + \mathcal{C}_4(x;n)P_{n+1}''(x) + \widehat{\mathcal{D}}_4(x;n)P_{n+1}'(x) &&\notag\\
+ \mathcal{E}_4(x;n)P_{n+1}(x) = 0,\ n\geq 0. &&\notag
\end{eqnarray}
\noindent  {\bf Greatest common factor between} $\mathcal{A}_4$, $\mathcal{B}_4$, $\mathcal{C}_4$, $\mathcal{D}_4$, and $\mathcal{E}_4$: $$c(x;n)=n+1,\ n\geq 0.$$
\noindent  {\bf Reduced coefficients of the fourth-order differential equation}
$$
\widehat{\mathcal{A}}_4=\mathcal{A}_4/c,\quad \widehat{\mathcal{B}}_4=\mathcal{B}_4/c,\quad \widehat{\mathcal{C}}_4=\mathcal{C}_4/c,\quad \widehat{\mathcal{D}}_4=\mathcal{D}_4/c,\quad \widehat{\mathcal{E}}_4=\mathcal{E}_4/c.
$$
\begin{align*}
&\widehat{\mathcal{A}}_4(x;n) = 1,\quad
\widehat{\mathcal{B}}_4(x;n) =0 ,\quad
\widehat{\mathcal{C}}_4(x;n) = 2(n+1),\\
&\widehat{\mathcal{D}}_4(x;n) =4x(-2x^2+n-2),\quad
\widehat{\mathcal{E}}_4(x;n) = 8(x^2+1)(n+1),\ n\geq 0.
\end{align*}
This fourth-order differential equation differs from the one presented in \cite{Loureiro-2006}, which guarantees the characterization of the classical sequence.

\subsection{Case 2 analogous to Hermite}\label{Section5_H_Case2}

\vspace{0.25cm}

\noindent \textbf{Regularity condition}
\[
\lambda, \rho \in\mathbb{C}, \quad \rho\neq 0.
\]
\noindent {\bf Recurrence coefficients} 
$$
\beta_{0}=\lambda,\quad \beta_{n+1}=0,\ n\geq 0;\qquad 
\gamma_{1}=\frac{\rho}{2},\quad \gamma_{n+1}=\frac{n}{2},\ n\geq 1.
$$
{\bf Coefficients of the Stieltjes equation} 
$$ \Phi(x)=1,\quad B(x)=2x^2-2\lambda x +1-\rho,\quad  C(x)=2x,\quad D(x)=0.$$
{\bf Coefficients of the Laguerre-Hahn structure relation} 
$$C_{n+1}(x)=-2x,\quad  D_{n+1}(x)=-2,\quad n\geq 0.$$
{\bf Relation to the classical Hermite form ${\cal H}$} \cite[Proposition 4.2]{Mohamed-Imed-2025}
$$
u_{0}^{(1)}={\cal H}.
$$
\noindent {\bf First structure relation}
\begin{eqnarray}
G_{0,1}(x;n)P^{(1)}_{n-1}(x)+G_{1,1}(x;n)P^{(1)}_{n}(x)+H_{1}(x;n)P_{n}(x)=&&\notag\\
\Phi(x) P'_{n+1}(x)+M_{0,1}(x;n)P_{n+1}(x),\ n\geq 0.\notag&&
\end{eqnarray}
 \begin{eqnarray}
&&G_{0,1}(x;n)=0,\quad 
G_{1,1}(x;n)=2x^2-2x\lambda  +1-\rho;\notag\\
&&  H_{1}(x;0)=\rho,\quad  H_{1}(x;n)=n,\ n\geq 1;\notag\\
&& \Phi(x)=1,\quad M_{0,1}(x;n)= 2x.\notag
\end{eqnarray}

\noindent {\bf Second structure relation}
\begin{eqnarray}
G_{0,2}(x;n)P^{(1)}_{n-1}(x)+ G_{1,2}(x;n)P^{(1)}_{n}(x)+ H_{2}(x;n)P_{n}(x)= &&\notag\\ 
\Phi^2(x)P''_{n+1}(x)+M_{1,2}(x;n)P'_{n+1}(x)+M_{0,2}(x;n)P_{n+1}(x),\ n\geq 0.&&\notag
\end{eqnarray}
 \begin{eqnarray}
&& G_{0,2}(x;0)= 2\rho\left(2 x^2-2x \lambda  -(\rho -1) \right),\notag\\
&&  G_{0,2}(x;n)  = 2n\left( 2 x^2-2x \lambda   - (\rho -1)\right),\ n\geq 1;\notag\\
&& G_{1,2}(x;n)  =2(2 x- \lambda),\quad H_{2}(x;n) =  0,\quad \Phi^2(x) = 1,\notag\\
&&  M_{1,2}(x;n) = 2x,\quad M_{0,2}(x;n)  =  2(n+1).\notag 
\end{eqnarray}

\noindent {\bf Third structure relation}
\begin{eqnarray}
G_{0,3}(x;n)P^{(1)}_{n-1}(x)+G_{1,3}(x;n)P^{(1)}_{n}(x)+H_{3}(x;n)P_{n}(x)=&&\notag\\
\Phi^3(x)P^{(3)}_{n+1}(x)+M_{2,3}(x;n)P''_{n+1}(x)+M_{1,3}(x;n)P'_{n+1}(x)+M_{0,3}(x;n)P_{n+1}(x),\ n\geq 0.&&\notag
\end{eqnarray}
 \begin{eqnarray}
G_{0,3}(x;0)&= &2\rho\left(4 x^3-4 x^2\lambda  -2 x(\rho -4)   -3 \lambda \right),\notag\\  
G_{0,3}(x;n)&= &2n\left( 4 x^3-4 x^2\lambda   -2 x(\rho -4) -3 \lambda  \right),\ n\geq 1;\notag\\
G_{1,3}(x;n)&= & 4\left(-2 n x^2+2x \lambda  n + (n (\rho -1)+1)\right),\quad H_{3}(x;n)= 0,\quad \Phi^3(x)=1,\notag\\
M_{2,3}(x;n)&= &2x,\quad M_{1,3}(x;n)=2 (2 + n),\quad  M_{0,3}(x;n)= 0.\notag
\end{eqnarray}

\noindent {\bf Fourth structure relation}
\begin{eqnarray}
G_{0,4}(x;n) P^{(1)}_{n-1}(x)+G_{1,4}(x;n) P^{(1)}_{n}(x)+H_{4}(x;n)P_{n}(x)=&&\notag\\
\Phi^4(x)P^{(4)}_{n+1}(x)+M_{3,4}(x;n) P^{(3)}_{n+1}(x)+M_{2,4}(x;n) P''_{n+1}(x)&&\notag\\
+M_{1,4}(x;n) P'_{n+1}(x)+M_{0,4}(x;n) P_{n+1}(x),\ n\geq 0.&&\notag
\end{eqnarray}
 \begin{eqnarray}
G_{0,4}(x;0)  & = & 4\rho\left(4 x^4-4 x^3\lambda   -4 x^2(\rho -7)   -7 x\lambda  -(\rho -5)\right) ,\notag\\
G_{0,4}(x;n) & = &4n\left(
4 x^4-4 x^3\lambda   -2x^2  (n+\rho -7)+x\lambda   (2n-7) + (n (\rho -1)-\rho +5)\right),n\geq 1;\notag\\
G_{1,4}(x;n) & = & 4n\left(
 -4 x^3+4 \lambda   x^2+2 x(\rho -6) +5\lambda  \right),\quad H_{4}(x;n)=0,\quad \ \Phi^4(x)=1,\notag\\
M_{3,4}(x;n)& = &2x ,\quad 
M_{2,4}(x;n)= 2 (3 + n) ,\quad 
M_{1,4}(x;n)= 0 ,\quad 
M_{0,4}(x;n) = 0.\notag
\end{eqnarray}

\noindent  {\bf The fourth-order linear differential equation}
\begin{eqnarray}
\mathcal{A}(x;n)P^{(4)}_{n+1}(x)
+\mathcal{B}(x;n)P^{(3)}_{n+1}(x)
+\mathcal{C}(x;n)P''_{n+1}(x)
+\mathcal{D}(x;n)P'_{n+1}(x)&&\notag\\
+\mathcal{E}(x;n)P_{n+1}(x)=0, \ n\geq 0.&&\notag
\end{eqnarray}
Greatest common factor between $\mathcal{A}$, $\mathcal{B}$, $\mathcal{C}$, $\mathcal{D}$, and $\mathcal{E}$: 
\begin{eqnarray}
 c(x;0)=4 \rho ^2,\quad  c(x;n)= 4n^2,\quad n\geq 1. \notag
\end{eqnarray}
$$\widehat{\mathcal{A}}=\mathcal{A}/c ,\quad \widehat{\mathcal{B}}=\mathcal{B}/c ,\quad \widehat{\mathcal{C}}=\mathcal{C}/c,\quad \widehat{\mathcal{D}}=\mathcal{D}/c,\quad \widehat{\mathcal{E}}=\mathcal{E}/c.$$
\begin{eqnarray}
\widehat{\mathcal{A}}(x;n) & = & -8 x^4(n+1) +4x^3 \lambda  (4 n+3) -4 x^2
   \left(\lambda ^2+2  n(\lambda ^2- \rho +1)-\rho +3\right)\notag\\
   &&-2 x\lambda   (4 n (\rho-1)+\rho -5)-2 n (\rho ^2+2 \rho -1)-3 \lambda ^2-2 \rho +2, \notag \\
   &&\notag\\
\widehat{\mathcal{B}}(x;n) & = &  32 x^3(n+1) -12 x^2\lambda  (4 n+3) +8 x
   \left(\lambda ^2+2  n(\lambda ^2- \rho +1)-\rho +3\right)\notag\\
   &&+2 \lambda  (4 n (\rho -1)+\rho -5),\notag \\
      &&\notag\\
\widehat{\mathcal{C}}(x;n) & = & 32 x^6(n+1) -16 x^5\lambda  (4 n+3) -16 x^4 \left(2 n^2+n(-2 \lambda ^2 +2  \rho)
   +\rho -3-\lambda ^2\right)\notag\\
   &&+8x^3 \lambda   \left(8 n^2+4 n (\rho +1)+\rho -3\right)\notag\\
   &&-4 x^2\left(8  n^2(\lambda ^2-\rho +1)+ 2n(4 \lambda ^2-  \rho ^2-2 \rho +15 )-\lambda ^2-6 \rho +22\right)\notag\\
   &&-4 x\lambda   \left(8 n^2( \rho -1)+ n (8\rho -32 )+2 \rho -17\right)\notag \\
   &&-2 \left(9 \lambda ^2+4 n^2 (\rho ^2-2 \rho +1)+2n(9 \lambda ^2 +2 \rho ^2+2 
   \rho -8 )+4 (\rho +1)\right),\notag\\
 &&\notag\\
\widehat{\mathcal{D}}(x;n) & = & 32 x^5(n+1) -32 x^4\lambda  (3 n+2) +16 x^3 \left(2 \lambda ^2+4 n(n+ \lambda ^2 - \rho +2 )-\rho +7\right)\notag\\
   &&-8x^2 \lambda \left(12 n^2+ n(- 10\rho +27) -2 \rho +21\right)\notag\\
   &&+8 x \left(7 \lambda ^2+4  n^2(\lambda^2-\rho +1)+ n(4 \lambda ^2+3 \rho ^2-20 \rho +21)-\rho +5\right)\notag\\
   &&+4\lambda  (n+2) (4 n (\rho -1)+\rho -5),\notag\\
   &&\notag\\
\widehat{\mathcal{E}}(x;n) & = & -32 x^4(n+1)^3 +16 x^3\lambda  (n+1) \left(4 n^2+5 n+2\right) \notag\\
   &&- 16 x^2(n+1) \left(2n^2( \lambda ^2 - \rho +1 )+n(\lambda ^2 - \rho +11 )-\rho +7\right)\notag\\
   &&-8x\lambda  (n+1) \left(4 n^2 (\rho -1)-n (\rho +19)-7\right) \notag\\
   &&- 4 (n+1) \left(2 n^2 (\rho ^2-2  \rho +2 )+n(15 \lambda ^2 -2 \rho ^2+14  \rho-12 )-2 \rho +10\right).\notag\\
\notag 
\end{eqnarray}

\section{Results for a semiclassical family of class~1}\label{Section7}

In this section, we list the results obtained corresponding to a semiclassical family of class 1, almost symmetric \cite{Maroni-Mejri-2011}, namely the second-order, third-order, and fourth-order differential equations satisfied by this polynomial sequence. Structure relations are available in the software.  Our results for the second-order differential equation coincide with those reported in \cite{Maroni-Mejri-2011}, thereby confirming the effectiveness of our implementation.

The following characteristic elements of this sequence, using as input data of the algorithm, were given in 
\cite{Maroni-Mejri-2011}.

\vspace{0.25cm}

\noindent \textbf{Regularity conditions} 
$$
\alpha, \beta \in \mathbb{C},  \quad \alpha \neq -(n+1), \quad \beta \neq -(n+1), \quad \alpha + \beta \neq -(n+1), \quad n \geq 0.
$$
\noindent \textbf{Recurrence coefficients}
\begin{eqnarray}
\beta_{n} &=& (-1)^n, \quad n \geq 0, \notag\\
\gamma_{2n+1}& =& -\frac{(n + \alpha+1)(n + \alpha + \beta+1)}{(2n + \alpha + \beta + 1)(2n + \alpha + \beta+2)}, \quad  n \geq 0,\notag\\
\gamma_{2n+2} &=& -\frac{(n +1)(n +  \beta+1)}{(2n + \alpha + \beta + 2)(2n + \alpha + \beta+3)}, \quad  n \geq 0. \notag
\end{eqnarray}
\noindent \textbf{Coefficients of the Stieltjes equation}
$$
\Phi(x) = x(x^2-1),~~ B(x) =0,~~ C(x) = (2\alpha + 2\beta+1)x^2-x-2\beta, ~~
D(x) = 2(\alpha + \beta+1)(x+1). 
$$ 
\noindent \textbf{Coefficients of the Laguerre-Hahn structure relation}
\begin{align*}
C_{n+1}(x) =& (2n+2\alpha + 2\beta+1)x^2+(-1)^{n+1}x-2\beta-2n+(2\alpha+1)\left((-1)^n-1\right),n \geq 0, \\
D_{n+1}(x) =& 2(n+\alpha + \beta+1)(x+(-1)^n), \ n \geq 0.
\end{align*}

\noindent Next, we present the list of the results obtained using the software. 

\noindent {\bf The second-order differential equation as a semiclassical sequence}
$$
\mathcal{C}_2(x;n)P''_{n+1}(x)
+\mathcal{D}_2(x;n)P'_{n+1}(x)
+\mathcal{E}_2(x;n)P_{n+1}(x)=0, \ n\geq 0.
$$
Greatest common factor between $\mathcal{C}_2$, $\mathcal{D}_2$, and $\mathcal{E}_2$: 
\begin{eqnarray}
 c_2(x;2n)& = & \frac{2 (\alpha +n+1) (\alpha +\beta +n+1)}{\alpha +\beta +2 n+1}x(x^2-1), \ n\geq 0,\notag\\
c_2(x;2n+1) & = & \frac{2 (n+1)  (\beta +n+1)}{\alpha +\beta +2 n+2}(x+1) x (x^2-1), \ n\geq 0.\notag
\end{eqnarray}
\noindent  {\bf Reduced coefficients of the second-order differential equation}
$$\widehat{\mathcal{C}}_2=\mathcal{C}_2/c_2, \quad
\widehat{\mathcal{D}}_2=\mathcal{D}_2/c_2,\quad \widehat{\mathcal{E}}_2=\mathcal{E}_2/c_2.$$
\begin{eqnarray}
\widehat{\mathcal{C}}_2(x;2n)& = & (x-1) x (x^2-1),\notag \\
\widehat{\mathcal{D}}_2(x;2n)& = & (x-1) \left(x^2 (2 \alpha +2 \beta +3)-2 x-2 \beta -1\right),\notag \\
\widehat{\mathcal{E}}_2(x;2n)& = & -(2 n+1) x^2 (2 \alpha +2
   \beta +2 n+3) \notag \\
&& + x \left(4 n^2+4 n (\alpha +\beta +2)+2\right)+2 \beta +1.\notag \\ \notag\\
\widehat{\mathcal{C}}_2(x;2n+1)& = &x ( x^2-1),\notag \\
\widehat{\mathcal{D}}_2(x;2n+1)& = & x^2 (2 \alpha +2 \beta +3)-2 \beta -1,\notag \\
\widehat{\mathcal{E}}_2(x;2n+1)& = & -4 x(n+1) (\alpha +\beta +n+2).\notag 
\end{eqnarray}  
\noindent {\bf The third-order differential equation as a semiclassical sequence}
$$
\mathcal{B}_3(x;n)P^{(3)}_{n+1}(x)
+\mathcal{C}_3(x;n)P''_{n+1}(x)
+\mathcal{D}_3(x;n)P'_{n+1}(x)
+\mathcal{E}_3(x;n)P_{n+1}(x)=0, \ n\geq 0.
$$
Greatest common factor between $\mathcal{B}_3$, $\mathcal{C}_3$, $\mathcal{D}_3$, and $\mathcal{E}_3$: 
\begin{eqnarray}
 c_3(x;2n) & = & \frac{2  (\alpha +n+1) (\alpha +\beta +n+1)}{\alpha +\beta +2 n+1}x (x^2-1), \ n\geq 0, \notag\\
c_3(x;2n+1) & = & \frac{2 (n+1) (\beta +n+1)}{\alpha +\beta +2 n+2}(x+1) x (x^2-1), \ n\geq 0. \notag
\end{eqnarray}
\noindent  {\bf Reduced coefficients of the third-order differential equation}
$$\widehat{\mathcal{B}}_3=\mathcal{B}_3/c_3,\quad \widehat{\mathcal{C}}_3=\mathcal{C}_3/c_3,\quad 
\widehat{\mathcal{D}}_3=\mathcal{D}_3/c_3,\quad \widehat{\mathcal{E}}_3=\mathcal{E}_3/c_3.$$
\begin{eqnarray}
\widehat{\mathcal{B}}_3(x;2n)& = &(x-1) x^2 (x^2-1)^2 ,\notag \\
\widehat{\mathcal{C}}_3(x;2n)& = & -(-1 + x) x (x^2-1)\left(  2 x^2(n-4)+x -2 n-2 \alpha+1\right),\notag \\
\widehat{\mathcal{D}}_3(x;2n)& = & -(-1 + x)\Big( 
x^4 \left(4 \alpha ^2+8 \alpha  \beta +4 \beta ^2+4 n^2+n (8 \alpha +8 \beta +14)-9\right)\notag\\
&&-4x^3 (\alpha +\beta +n-1)\notag\\
&&-2 x^2 \left(2 \alpha ^2+\alpha  (6 \beta +3)+4 \beta ^2+2 \beta +2 n^2+n (4 \alpha +6 \beta +8)-2\right)\notag\\
&&+(2 \beta +1) (2 \alpha +2 \beta +2 n+1) +4 x (\alpha +\beta +n) \Big),\notag \\
\widehat{\mathcal{E}}_3(x;2n)& = &(2 n+1) x^4 (2 \alpha +2 \beta +2 n-3) (2
   \alpha +2 \beta +2 n+3)\notag\\
&& -4 x^3 (\alpha +\beta +n-1) \left(2 n^2+n (2 \alpha +2 \beta +4)+1\right)\notag\\
   &&-2 x^2 (2\alpha +2 \beta +2 n-1) \left(\alpha +2 \beta +2 n^2+n (2 \alpha +2 \beta +4)+2\right)\notag\\
   &&+4 x (\alpha +\beta +n) \left(2 n^2+n (2 \alpha +2 \beta +4)+1\right)\notag \\
   &&+(2
   \beta +1) (2 \alpha +2 \beta +2 n+1),\notag \\  \notag \\ 
\widehat{\mathcal{B}}_3(x;2n+1)& = & x^2 (x^2-1)^2,\notag \\
\widehat{\mathcal{C}}_3(x;2n+1)& = & -x (x^2-1)\left((2 n-7) x^2-2 n+1\right),\notag \\
\widehat{\mathcal{D}}_3(x;2n+1)& = & x^4 \left(-4 \alpha ^2+\alpha  (-8 \beta -4)-4 \beta ^2-4 \beta -4 n^2+n (-8 \alpha -8 \beta
   -18)+1\right)\notag\\
   &&+4 x^2 \left(\alpha  (2 \beta +1)+2 \beta ^2+2 \beta +n^2+n (2 \alpha +3 \beta +5)+1\right)\notag\\
   &&-(2 \beta +1) (2 \beta +2 n+1),\notag \\
\widehat{\mathcal{E}}_3(x;2n+1)& = & 8 (n+1) x (\alpha +\beta +n+2)\left(-\beta +x^2 (\alpha +\beta +n-1)-n\right).\notag 
\end{eqnarray}  

\noindent {\bf The fourth-order differential equation as a semiclassical sequence}
\begin{eqnarray}
\mathcal{A}_4(x;n)P^{(4)}_{n+1}(x)
+\mathcal{B}_4(x;n)P^{(3)}_{n+1}(x)
+\mathcal{C}_4(x;n)P''_{n+1}(x)
+\mathcal{D}_4(x;n)P'_{n+1}(x)&&\notag\\
+\mathcal{E}_4(x;n)P_{n+1}(x)=0, \ n\geq 0.&&\notag
\end{eqnarray}
\noindent {\bf Greatest common factor between} $\mathcal{A}_4$, $\mathcal{B}_4$, $\mathcal{C}_4$, $\mathcal{D}_4$, and $\mathcal{E}_4$: 
\begin{eqnarray}
 c_4(x;2n) & = & \frac{2  (\alpha +n+1) (\alpha +\beta +n+1)}{\alpha +\beta +2 n+1}x (x^2-1), \ n\geq 0, \notag\\ 
c_4(x;2n+1) & = & \frac{2 (n+1)  (\beta +n+1)}{\alpha +\beta +2 n+2}(x+1) x (x^2-1), \ n\geq 0.  \notag
\end{eqnarray}
\noindent  {\bf Reduced coefficients of the fourth-order differential equation}
$$\widehat{\mathcal{A}}_4=\mathcal{A}_4/c_4,\quad \widehat{\mathcal{B}}_4=\mathcal{B}_4/c_4,\quad \widehat{\mathcal{C}}_4=\mathcal{C}_4/c_4, 
\ \widehat{\mathcal{D}}_4=\mathcal{D}_4/c_4,\quad \widehat{\mathcal{E}}_4=\mathcal{E}_4/c_4.$$
\begin{eqnarray}
\widehat{\mathcal{A}}_4(x;2n)& = & (-1 + x)^4 x^3 (1 + x)^3,\notag \\
\widehat{\mathcal{B}}_4(x;2n)& = &-(-1 + x)^3 x^2 (1 + x)^2\Big((2 n-17) x^2+x-2 (\alpha +n-2)
\Big) ,\notag\\ 
\widehat{\mathcal{C}}_4(x;2n)& = & -(-1 + x)^2 x (1 + x)\Big(6 x^4(5 n-12) +12 x^3\notag\\
&&
+x^2 \left(4 \alpha ^2+\alpha  (4 \beta -10)+4 \beta +4 n^2+n (8 \alpha +4 \beta -28)+43\right)\notag\\
&&-6 x-4 \alpha ^2+\alpha  (-4 \beta -2)-4 \beta +n (-8 \alpha -4 \beta -6)-5
\Big),\notag \\
\widehat{\mathcal{D}}_4(x;2n)& = &(-1 + x)\Big(
x^6 \left(8 \alpha ^3+\alpha ^2 (24 \beta -12)+\alpha  \left(24 \beta ^2-24 \beta -2\right)+\right.\notag\\
&&8\beta ^3-12 \beta ^2-2 \beta +8 n^3+n^2 (24 \alpha +24 \beta -12)\notag\\
&&\left.+n \left(24 \alpha^2+\alpha  (48 \beta -24)+24 \beta ^2-24 \beta -116\right)+51\right)\notag\\
&&-6 x^5 \left(2 \alpha ^2+\alpha  (4 \beta -1)+2 \beta ^2-\beta +2 n^2+n (4 \alpha +4 \beta -1)+5\right)\notag\\
    &&+x^4 \left(-16 \alpha ^3+\alpha ^2 (-56 \beta -8)+\alpha  \left(2-64 \beta ^2\right)-24 \beta ^3+8
   \beta ^2-4 \beta \right.\notag\\
&&-16 n^3+n^2 (-48 \alpha -56 \beta -8)\notag\\
&&\left.+n \left(-48 \alpha ^2+\alpha 
   (-112 \beta -16)-64 \beta ^2+146\right)-57\right)\notag\\
     &&+4 x^3 \left(5 \alpha ^2+\alpha  (11 \beta +1)+6 \beta ^2+2 \beta +5 n^2+n (10 \alpha +11 \beta
   +1)+8\right)\notag\\
   &&+x^2 \left(8 \alpha ^3+\alpha ^2 (40 \beta +24)+\alpha  \left(56 \beta ^2+44 \beta +14\right)\right.\notag\\
   &&+24\beta ^3+20 \beta ^2+18 \beta +8 n^3+n^2 (24 \alpha +40 \beta +24)\notag\\
   &&\left.+n \left(24 \alpha^2+\alpha  (80 \beta +48)+56 \beta ^2+44 \beta -24\right)+17\right)\notag\\
   &&-2 x \left(4 \alpha ^2+\alpha  (10 \beta +5)+6 \beta ^2+7 \beta +4 n^2+n (8 \alpha +10 \beta +5)+5\right)\notag\\ 
  && -(2 \beta +1) \left(4 \alpha ^2+\alpha  (8 \beta +6)+4 \beta ^2+6 \beta +4 n^2+n (8 \alpha +8 \beta +6)+3\right)
\Big) ,\notag \\
\widehat{\mathcal{E}}_4(x;2n)& = & -x^6(2 n+1) (2 n+2 \alpha +2 \beta +3) \left(4 \alpha ^2+\alpha  (8 \beta -12)+4 \beta ^2-12 \beta +4 n^2\right.\notag\\
&&\left.+n (8 \alpha +8 \beta-12)+17\right) \notag\\
&&+2 x^5
   \Big(6 \alpha ^2+\alpha  (12 \beta -3)+6 \beta ^2-3 \beta +8 n^4+n^3 (24 \alpha +24 \beta)\notag\\
   &&+n^2 \left(24 \alpha ^2+48 \alpha  \beta +24 \beta ^2-2\right)\notag\\
   &&+n \left(8 \alpha
   ^3+24 \alpha ^2 \beta +\alpha  \left(24 \beta ^2+4\right)+8 \beta ^3+4 \beta
   +45\right)+15\Big)\notag\\
   &&+x^4\left(16 \alpha ^3+\alpha ^2 (56 \beta +8)+\alpha  \left(64 \beta ^2-2\right)+24 \beta ^3-8
   \beta ^2+4 \beta +32 n^4\right.\notag\\
   &&+n^3 (96 \alpha +96 \beta +16)\notag\\
   &&+n^2 \left(96 \alpha ^2+\alpha
    (192 \beta +48)+96 \beta ^2+56 \beta \right)+n \left(32 \alpha ^3+\alpha ^2 (96
   \beta +48)\right.\notag\\
   &&\left.\left.+\alpha  \left(96 \beta ^2+112 \beta +8\right)+32 \beta ^3+64 \beta ^2-8
   \beta +94\right)+57\right) \notag\\
   &&-4 x^3\left(5 \alpha ^2+\alpha  (11 \beta +1)+6 \beta ^2+2 \beta +8 n^4+n^3 (24 \alpha +24 \beta
   +12)\right.\notag\\
   &&\left.+n^2 \left(24 \alpha ^2+\alpha  (48 \beta +24)+24 \beta ^2+24 \beta +9\right)\right.\notag\\
   &&+n
   \left(8 \alpha ^3+\alpha ^2 (24 \beta +12)+\alpha  \left(24 \beta ^2+24 \beta
   +14\right)\right.\notag\\
   &&\left.\left.+8 \beta ^3+12 \beta ^2+15 \beta +25\right)+8\right) \notag\\
   &&+ x^2\left(-8 \alpha ^3+\alpha ^2 (-40 \beta -24)+\alpha  \left(-56 \beta ^2-44 \beta
   -14\right)\right.\notag\\
   &&-24 \beta ^3-20 \beta ^2-18 \beta -16 n^4+n^3 (-48 \alpha -48 \beta-32)\notag\\
   &&+n^2 \left(-48 \alpha ^2+\alpha  (-96 \beta -72)-48 \beta ^2-88 \beta
   -36\right)\notag\\
   &&+n (-16 \alpha ^3+\alpha ^2 (-48 \beta -48)+\alpha  \left(-48 \beta
   ^2-128 \beta -60\right) \notag\\
   && -16 \beta ^3-80 \beta ^2-56 \beta-32)-17) \notag\\
   &&+2x
   \left(4 \alpha ^2+\alpha  (10 \beta +5)+6 \beta ^2+7 \beta +8 n^4+n^3 (24 \alpha +24 \beta
   +24)\right.\notag\\
   &&\left.+n^2 \left(24 \alpha ^2+\alpha  (48 \beta +48)+24 \beta ^2+48 \beta +28\right)\right.\notag\\
   &&+n
   \left(8 \alpha ^3+\alpha ^2 (24 \beta +24)+\alpha  \left(24 \beta ^2+48 \beta
   +32\right)\right.\notag\\
   &&\left.\left.+8 \beta ^3+24 \beta ^2+34 \beta +21\right)+5\right) 
   \notag\\
   &&+(1+2\beta)\left(4 \alpha ^2+\alpha  (8 \beta +6)+4 \beta ^2+6 \beta +4 n^2+n (8 \alpha +8 \beta +6)+3\right).\notag \\
\widehat{\mathcal{A}}_4(x;2n+1)& = & x^3 ( x^2-1)^3,\notag \\
\widehat{\mathcal{B}}_4(x;2n+1)& = &-2 x^2 (x^2-1)^2 \left((n-8) x^2-n+2\right) ,\notag \\
\widehat{\mathcal{C}}_4(x;2n+1)& = &- x (x^2-1)\Big(3 x^4(10n-19) 
+2 x^2 \left(2 \beta +2 n^2+(2 \beta -14) n+17\right)\notag\\
&&-4 \beta -4 n^2-(4 \beta +2) n-5
\Big) ,\notag \\
\widehat{\mathcal{D}}_4(x;2n+1)& = &
   x^6 \left(8 \alpha ^3+24 \alpha ^2 \beta +\alpha  \left(24 \beta ^2-8\right)+8 \beta ^3-8 \beta
   +8 n^3+n^2 (24 \alpha +24 \beta )\right.\notag\\
 &&  \left.+n \left(24 \alpha ^2+48 \alpha  \beta +24 \beta^2-122\right)-9\right)\notag\\
   &&+x^4 \Big(-24 \alpha ^2 \beta +\alpha  \left(-48 \beta ^2-16 \beta +8\right)-24 \beta ^3-16 \beta
   ^2-12 \beta \notag\\
   &&-16 n^3+n^2 (-40 \alpha -56 \beta -28)\notag\\
   &&+n \big(-24 \alpha ^2+\alpha 
   (-88 \beta -8)-64 \beta ^2-44 \beta+142\big)+19\Big)\notag\\
   &&+x^2 \Big(\alpha  \left(24 \beta ^2+16 \beta \right)+24 \beta ^3+32 \beta ^2+32 \beta +8 n^3\notag\\
   &&+n^2
   (16 \alpha +40 \beta +32) +n \left(\alpha  (40 \beta +8)+56 \beta ^2+64 \beta
   -14\right)+1\Big) \notag\\
   &&-(2 \beta +1) \left(4 \beta ^2+6 \beta +4 n^2+(8 \beta +6) n+3\right),\notag \\
\widehat{\mathcal{E}}_4(x;2n+1)& = & -16x (n+1) (\alpha +\beta +n+2)\Big(\notag\\
&&
+x^4 \left(\alpha ^2+\alpha  (2 \beta -2)+\beta ^2-2 \beta +n^2+n (2 \alpha +2 \beta -2)+3\right)\notag\\
&&+x^2 \left(\alpha  (1-2 \beta )-2 \beta ^2+\beta -2 n^2+n (-2 \alpha -4 \beta +1)-3\right)\notag\\
&&
+\beta ^2+\beta +n^2+(2 \beta +1) n+1
\Big).\notag 
\end{eqnarray}  

\section*{Conclusions}

The method developed in this work provides a unified and constructive approach for deriving structure relations and a fourth-order differential equation for any Laguerre–Hahn orthogonal family. Its symbolic implementation demonstrates its efficiency and versatility, as illustrated by the examples treated. The algebraic formalism presented here not only recovers known results for classical and semiclassical families but also offers an algorithm tool for handling more general Laguerre–Hahn sequences. In addition, new structure relations and differential equations have been obtained for both semiclassical and classical families.\\
Thus, the present work fills the existing gaps in the literature regarding explicit structure relations and fourth-order differential equations for Laguerre–Hahn families, while providing a reliable algorithm applicable to all such sequences.



\section*{Addendum}

In this paper, we present the results of a scientific work originally
started by the two last authors, Pascal Maroni and Zélia da Rocha.
This project coupled theoretical results
with their implementation in a {\it Mathematica$^{\circledR}$} software.
As is often made by scientists, the work was suspended with the 
intention to resume it later.
Pascal Maroni was a very prolific researcher, and other projects and collaborations
prevented him from returning to this subject.
Unfortunately, Pascal Maroni passed away in January 2024.
This was not the only work that Pascal began and failed to complete.
His friends, collaborators, and admirers thus decided to honor his memory,
to finish all the ongoing works and publish them to disseminate his ideas into
the scientific community, and to insert Pascal as one of the authors.
In this spirit, Zélia da Rocha invited the first author, Mohamed Khalfallah, 
to collaborate in the theoretical part so that this work could finally be completed.
 
\section*{Declarations}

\noindent  {\bf Data Availability} No datasets were generated or analysed during the current study.

\noindent  {\bf Conflicts of Interest} The authors have no conflict of interest to declare.

\noindent  {\bf Competing interests} The authors declare no competing interests.

\noindent {\bf Funding:} The third author was partially supported by CMUP, a member of LASI, which is financed by national funds through FCT -- Funda\c c\~ao para a Ci\^encia e a Tecnologia, I.P., under the projects with reference 
UID/00144/2025.

\bibliographystyle{plain}
\bibliography{sn-bibliography-4oDELH}

\end{document}